\newtheorem{thm}{Theorem}[subsection]
\newtheorem{theorem}[thm]{Theorem}
\newtheorem{lemma}[thm]{Lemma}
\newtheorem{corollary}[thm]{Corollary}
\newtheorem{proposition}[thm]{Proposition}
\numberwithin{equation}{section} \theoremstyle{definition}
\newtheorem*{remark}{Remark}
\newtheorem*{remarks}{Remarks}
\newcommand{\diag}{{\rm diag}}
\newcommand{\Tr}{{\rm Tr}}
\newcommand{\B}{\mathcal{B}}
\newcommand{\U}{\mathcal{U}}
\newcommand{\A}{\mathcal{A}}
\newcommand{\I}{\mathcal{I}}
\newcommand{\oo}{\mathcal{O}_F}
\newcommand{\m}{^{\text{-1}}}
\newcommand{\ord}{{\rm ord}}
\newcommand{\PGL}{{\rm PGL}}
\newcommand{\GL}{{\rm GL}}
\newcommand{\SL}{{\rm SL}}
\newcommand{\Sp}{{\rm Sp}}
\newcommand{\Z}{\mathbb Z}
\newcommand{\G}{\Gamma}
\newcommand{\g}{\gamma}
\newcommand{\ka}{\kappa}
\newcommand{\ve}{\varepsilon}
\newcommand{\de}{\delta}
\newcommand{\vol}{{\rm vol}}
\begin{document}
\title[Zeta Functions of Complexes Arising from $\PGL(3)$]{Zeta Functions of Complexes Arising from $\PGL(3)$}
\author{Ming-Hsuan Kang and Wen-Ching Winnie Li}
\address{Ming-Hsuan Kang\\ Department of Applied Mathematics\\ National Chiao-Tung University\\
Hsinchu, Taiwan} \email{\tt mhkang@math.nctu.edu.tw}
\address{Wen-Ching Winnie Li\\ Department of Mathematics\\ The Pennsylvania State University\\
University Park, PA 16802 U.S.A. ~and National Center for Theoretical Sciences, Mathematics Division,
National Tsing Hua University, Hsinchu 30013, Taiwan, R.O.C.} \email{\tt wli@math.psu.edu}

\thanks{This research was supported in part by the DARPA
grant HR0011-06-1-0012 and the NSF grants DMS-0457574 and DMS-0801096 (both authors),   NSF grant DMS-1101368 (W-C. W. Li), and also the NSC grant 100-2115-M-009-008-MY2 (M. Kang). Part of the research was performed
while both authors were visiting the National Center for Theoretical
Sciences, Mathematics Division, in Hsinchu, Taiwan. They would like
to thank the Center for its support and hospitality.}

\keywords{Bruhat-Tits building, zeta functions, discrete cocompact subgroups, complexes}

\subjclass[2000]{Primary: 22E35; Secondary: 11F70 }
\begin{abstract} In this paper we obtain a closed form expression of
the zeta function $Z(X_\G, u)$ of a finite quotient {$X_\G$ of the Bruhat-Tits building of
$\PGL_3$ over a nonarchimedean local field $F$ by a discrete cocompact torsion-free subgroup $\Gamma$ of $\PGL_3$}. Analogous to a graph
zeta function, $Z(X_\G, u)$ is a rational function with two different expressions and it satisfies
the Riemann hypothesis if and only if $X_\G$ is a Ramanujan complex.
\end{abstract}
\maketitle

\section{Introduction}
\subsection{} First introduced by Ihara \cite{Ih} for groups and later
reformulated by Serre for regular graphs,
the zeta function of a finite,
connected, undirected graph $X$ is defined as
$$ Z(X,u) = \prod_{[C]} (1 - u^{l([C])})^{-1},$$
where the product is over equivalence classes $[C]$ of geodesic
tailless primitive cycles $C$, and $l([C])$ is the length of a cycle
in $[C]$. In this paper we adopt the convention that a cycle is an oriented closed path with a starting point and possible repetition of vertices. Two cycles are equivalent if one is obtained from the other by changing the starting vertex. A geodesic path on a graph means no backtracking.
A cycle is tailless if all cycles equivalent to it are geodesic; it is primitive if it is not a repetition of a shorter cycle more than once. Taking the logarithmic derivative of $Z(X, u)$, one gets
$$Z(X,u) =  \exp \bigg(\sum_{n \ge 1} \frac{N_n(X)}{n} u^n \bigg),$$ where
$N_n(X)$ counts the number of geodesic tailless cycles in $X$
of length $n$.

Not only defined analogous to the zeta function of a curve over a finite field, the zeta
function of a graph is also a rational function. This can be seen in two ways.
The first is the result of Ihara:

\begin{theorem}[Ihara \cite{Ih}]\label{Ihara}
Let $X$ be a
$(q+1)$-regular graph. Then its zeta function
 is a rational function of the form
$$ Z(X,u) = \frac{(1 - u^2)^{\chi(X)}}{\det(I - Au + qu^2I)},$$
where $\chi(X)$ is the Euler characteristic of $X$
and $A$ is the adjacency matrix of $X$.
\end{theorem}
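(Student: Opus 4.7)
The plan is to identify $Z(X,u)^{-1}$ with the characteristic polynomial of a non-backtracking edge adjacency operator on $X$, and then to relate this to $\det(I - Au + qu^2 I)$ through an algebraic identity due to Bass. Taking logarithmic derivatives of the two expressions for $Z(X,u)$ given above, it is enough to produce an operator $B$ on the space $\C^{2|E(X)|}$ of oriented edges for which $N_n(X) = \Tr(B^n)$ for every $n \ge 1$, since then
$$Z(X,u) \;=\; \exp\!\Bigl(\sum_{n \ge 1} \tfrac{\Tr(B^n)}{n}u^n\Bigr) \;=\; \det(I - Bu)^{-1}.$$

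For such an operator I would take the Hashimoto edge adjacency matrix $B$, whose rows and columns are indexed by the oriented edges of $X$, with $B_{e,f} = 1$ if the terminus of $e$ equals the origin of $f$ and $f \ne \bar e$, and $B_{e,f} = 0$ otherwise. A geodesic tailless cycle of length $n$ in $X$ is encoded by its edge sequence $(e_1, \dots, e_n)$ (the starting vertex is recovered as the origin of $e_1$): non-backtracking at each step is $B_{e_i, e_{i+1}} = 1$ for $1 \le i < n$, and taillessness is the closing condition $B_{e_n, e_1} = 1$. Summing over such cyclic sequences gives precisely $\Tr(B^n)$, so $N_n(X) = \Tr(B^n)$.

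The main step, and also the main obstacle, is the Bass identity
$$\det(I_{2|E|} - Bu) \;=\; (1-u^2)^{|E|-|V|}\,\det(I_{|V|} - Au + qu^2 I_{|V|}).$$
To prove it I would introduce the vertex--oriented-edge incidence matrices $S, T \in \C^{|V| \times 2|E|}$ with $S_{v,e} = 1$ iff $v$ is the origin of $e$ and $T_{v,e} = 1$ iff $v$ is the terminus of $e$, together with the orientation-reversing involution $J$ on $\C^{2|E|}$. One then records the basic relations $A = ST^t$, $B + J = T^t S$, $J^2 = I$, $SJ = T$, and (here using $(q+1)$-regularity) $SS^t = (q+1)\,I_{|V|}$, as well as the easy determinant $\det(I_{2|E|} + Ju) = (1-u^2)^{|E|}$. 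The identity is obtained by evaluating the determinant of a single $(|V|+2|E|) \times (|V|+2|E|)$ block matrix by Schur complement in two different orders: eliminating the $|V|$-block first produces $\det(I - Au + qu^2 I)$ times a factor that collapses to $(1-u^2)^{|E|}$, while eliminating the $2|E|$-block first produces $\det(I - Bu)$ times $(1-u^2)^{|V|}$. The delicate part is choosing the off-diagonal entries so that the five relations above force both Schur complements to simplify cleanly. Once the Bass identity is in hand, combining it with $Z(X,u) = \det(I - Bu)^{-1}$ and $\chi(X) = |V| - |E|$ yields the formula of the theorem.
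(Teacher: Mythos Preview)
The paper does not give its own proof of this theorem: it is stated in the introduction as a classical result of Ihara, with the sentence ``This theorem is extended to irregular graphs in \cite{Ba}, \cite{Ha2}, \cite{ST}, and \cite{Ho}'' and, immediately after, Hashimoto's identity $Z(X,u)=\det(I-A_eu)^{-1}$ is quoted as ``the second proof of the rationality of the graph zeta function.'' So there is no in-paper argument to compare against.

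Your proposal is a correct and standard route. The first step, $N_n(X)=\Tr(B^n)$ and hence $Z(X,u)=\det(I-Bu)^{-1}$, is exactly Hashimoto's result that the paper records as (\ref{hashimoto}). The second step, the Bass determinant identity, is also correct; your Schur-complement sketch with $S$, $T$, $J$ and the relations $A=ST^t$, $B+J=T^tS$, $SJ=T$, $SS^t=(q+1)I$ is the usual way to carry it out (see \cite{Ba} or \cite{ST}). One small caveat: you describe the ``delicate part'' as choosing the off-diagonal blocks correctly but do not actually exhibit the block matrix or carry out either Schur reduction, so as written this is a sketch rather than a full proof. If you want a self-contained argument you should display the $(|V|+2|E|)\times(|V|+2|E|)$ matrix explicitly (for instance with blocks $I_{|V|}(1-u^2)$, $-Su+Tu^2$, $T^t-S^tu$, $I_{2|E|}-Bu$ or a variant thereof) and verify both reductions line by line.
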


This theorem is extended to irregular graphs in \cite{Ba}, \cite{Ha2}, \cite{ST}, and \cite{Ho}.
The reader is referred to
\cite{ST} and the references therein for the history and various
zeta functions attached to a graph.

Endow two opposite orientations on each edge of $X$. Define the
{out-neighbor} of the directed edge $u \to v$ to be the edges $v \to w$ with $w \ne u$.
The (directed) edge adjacency matrix $A_e$ has its rows and columns indexed by
the directed edges $e$ of $X$ such that the $e e'$ entry records the number of times $e'$ is an out-neighbor
of $e$.
Hashimoto
\cite{Ha} observed that $N_n(X) = \Tr A_e^n$ so that
\begin{eqnarray}\label{hashimoto}
Z(X, u) = \frac {1}{\det(I - A_e u)}.
\end{eqnarray}
This gives the second {proof} of the rationality of the graph zeta
function.

 A $(q+1)$-regular graph $X$ is called {\it Ramanujan} if all
eigenvalues $\lambda$ of its adjacency matrix $A$ other than $\pm
(q+1)$ satisfy $|\lambda| \le 2 \sqrt q$ (cf. \cite{LPS}). The
Ramanujan graphs are optimal expanders with extremal spectral
property. It is easily checked that $X$ is Ramanujan if and only if
its zeta function $Z(X,u)$ satisfies the Riemann hypothesis, that
is, the poles of $Z(X,u)$ other than $\pm 1$ and $\pm q^{-1}$
(called nontrivial poles) have the same absolute value $q^{-1/2}$ (cf.
\cite{ST}).

\subsection{} When $q$ is a prime power, the universal cover of a $(q+1)$-regular graph can be identified with the $(q+1)$-regular tree
{associated to $\PGL_2(F)$} for
 a nonarchimedean local field $F$ with $q$ elements in its
residue field. Denote by $\oo$ its ring of integers  and let $\pi$ be a uniformizer of $F$.
The vertices of the tree can be parametrized by the right cosets of the standard maximal compact subgroup $\PGL_2(\oo)$ and the directed
edges by the right cosets of the Iwahori subgroup $\I$ of $\PGL_2(\oo)$.
Moreover, the (vertex) adjacency operator $A$ on the tree is the
Hecke operator given by the double coset $\PGL_2(\oo)\diag(1, \pi)
\PGL_2(\oo)$ and the edge adjacency operator $A_e$ is the
Iwahori-Hecke operator given by the double coset $\I \diag(1, \pi)
\I$. One obtains a $(q+1)$-regular graph $X_{\tilde \G}$ by taking a left quotient
by a torsion-free discrete cocompact subgroup $\tilde \G$ of $\PGL_2(F)$.

This set-up has a higher dimensional extension to the Bruhat-Tits
building $\B_n$ associated to {$\PGL_n(F)$}, which is a
{contractable} $(n-1)$-dimensional simplicial complex. Like graphs, one
obtains finite complexes $X_\G$ by taking quotients of $\B_n$ by torsion-free discrete
cocompact subgroups $\G$ of $\PGL_n(F)$. The concept of Ramanujan complexes
 was introduced in \cite{Li},  called Ramanujan hypergraphs there. Three explicit constructions
of infinite families of Ramanujan complexes were given in
\cite{Li},  \cite{LSV1} and
\cite{Sa}, respectively, using deep results on the Ramanujan
conjecture over function fields for automorphic representations of
the multiplicative group of a division algebra by
Laumon-Rapoport-Stuhler \cite{LRS} and of $\GL_n$ by Lafforgue
\cite{La}. Further, the paper \cite{LSV2} discusses what kind of
$\G$ would fail to yield a Ramanujan complex.

To extend zeta functions from graphs to complexes, one seeks a
similarly defined zeta function counting closed geodesic tailless cycles in
 $X_\G$ with the following properties:
\begin{itemize}
\item[(a)] it is a rational function with a closed form expression;

\item[(b)] it captures both topological and spectral information of
$X_\G$; and

\item[(c)] it satisfies the Riemann hypothesis if and only if
 $X_\G$ is a Ramanujan complex.
\end{itemize}
The purpose of this paper is to present  zeta functions with such properties for $2$-dimensional complexes which are finite quotients of $\B_3$. This was previously considered in \cite{DH} by Deitmar and Hoffman. The zeta functions there were defined differently, and they were not shown to possess the properties (a)-(c). Recently, Fang, Li and Wang in \cite{FLW} obtained zeta functions for $2$-dimensional complexes arising from finite quotients of the building associated to $\Sp_4(F)$.

\subsection{}  In what follows, we fix a
local field $F$ with $q$ elements in its residue field as before.
Let $\B$ denote the Bruhat-Tits building $\B_3$ associated to $\PGL_3(F)$, which is a 2-dimensional contractable simplicial complex.
 Write $G$ for the group $\GL_3(F)$, $Z$ its center, and $K$ its standard maximal compact subgroup $\GL_3(\oo)$. Denote by $E$ and $B$ the standard parahoric and Iwahoric subgroups of $K$, respectively. Similar to the case of $\PGL_2(F)$, the vertices, type $1$ edges, and pointed chambers of the building $\B$ can be parametrized by the right $KZ-$, $EZ-$, and $BZ-$ cosets of $G$, respectively. The Hecke operators $A_1$ and $A_2$ associated to the double cosets $K \diag(1, 1, \pi)KZ$ and $K \diag(1, \pi, \pi)KZ$ describe the type $1$ and type $2$ out-neighbors of a vertex, the operator $L_E$ associated to the double coset $E \diag(1, 1, \pi)EZ$ describes the type 1 out-neighbors of a type 1 edge, and the out-neighbors of a pointed chamber are given by the operator $L_B$ associated to the double coset $B  \left( \begin{smallmatrix} 1 &  &  \\
& & 1 \\  & \pi & \end{smallmatrix}\right) BZ$. Details are given in \S3.

All $1$-dimensional paths in $\B$ considered in this paper are contained in the $1$-skeleton of $\B$.
A $1$-geodesic between two vertices in $\B$ is a shortest path in the $1$-skeleton of $\B$. As $\B$ is the union of apartments and each apartment is an Euclidean plane, there is a metric on $\B$ so that a geodesic in $\B$ is a straight line contained in an apartment. Thus a $1$-geodesic in $\B$ is a geodesic if and only if it consists of edges of the same type. Let $\G$ be a discrete torsion-free cocompact-mod-center subgroup of $G$ satisfying $\ord_\pi \det \G \subset 3\Z$. Denote by $X_\G$ the (finite) quotient $\G \backslash \B$.
A $1$-geodesic in $X_\Gamma$ is called a geodesic if one and hence all of its liftings in $\B$ are geodesics.

\subsection{} For $i=1$ or 2, the type $i$ edge zeta function of $X_\G$ is defined as
$$ Z_{1,i}(X_\G,u) = \prod_{[C]} (1 - u^{l_A([C])})^{-1},$$
\noindent where $[C]$ runs through the equivalence classes of
 tailless primitive closed geodesics $C$ in $X_\G$ consisting of edges of type $i$,
 and $l_A([C])$ is the algebraic
length of any geodesic in $[C]$ defined in \S5.3. The first main result below follows from Proposition \ref{Z1andLE} and Theorem \ref{logedgezeta}. It extends Hashimoto's identity (\ref{hashimoto}) to type $i$ edge zeta functions, and gives an explicit formula in terms of conjugacy classes of $\G$ for the number of tailless closed geodesics in $X_\G$ of a given length. 

 {\bf Theorem A.} {\it The edge zeta functions are rational functions in $u$ with the following expressions:
$$  Z_{1,i}(X_\G,u) = \frac{1}{\det(1 - L_Eu^{i})} = \exp(\sum_{n \ge 1} \frac{N_n(X_\G)}{n} ~u^{in}), \qquad i = 1, 2, $$
\noindent where
$N_n(X_\G)$ counts the number of closed tailless geodesics of algebraic length $n$ using only type $1$ edges in $X_\G$; it is given by
\begin{eqnarray}
N_n(X_\G) = \sum_{\g \in [\G],~ [\g] ~\rm{of~type} ~(n, 0)} \rm{vol}([\g]) \omega_{[\g]}.
\end{eqnarray}
Here  $[\G]$ is a set of representatives of conjugacy classes of $\G$, $[\g]$  is a set of closed geodesics defined by (\ref{classofgamma}), $\rm{vol}([\g])$ is given in  (\ref{fundamentaldomain'}), and $\omega_{[\g]}$ is as in Theorem \ref{numberinaclass} and Proposition  \ref{rankonenumberofalgtailless}. }
\smallskip

In addition to paths formed by directed edges, we also consider paths formed by edge-adjacent  chambers, called galleries.  

The type $1$ chamber zeta function of $X_\G$ is defined similar to the type $1$ edge zeta function:
$$ Z_{2,1}(X_\G,u) = \prod_{[C]} (1 - u^{l([C])})^{-1},$$
\noindent where $[C]$ runs through the equivalence classes of
  primitive closed tailless galleries $C$ in $X_\G$ of type $1$,
 and $l([C])$ is the length of any gallery in $[C]$.  (See \S \ref{galleries} for definitions.) Our second main result is a detailed description of $Z_{2,1}(X_\G, u)$, obtained from Proposition \ref{rationalZ2} and Corollary \ref{numberoftaillessgalleries}.

{\bf Theorem B.} {\it The type $1$ chamber zeta function is a rational function with following expressions:
\begin{eqnarray}
Z_{2,1}(X_\G, u) = \frac{1}{\det (I - L_B u)} = \exp(\sum_{n \ge 1} \frac{M_n(X_\G)}{n} u^n),
\end{eqnarray}
where the number $M_n(X_\G)$ of closed tailless galleries in $X_\G$ of type $1$ and length $n$ is given below:

(1) If $n=2m+1$ is odd, then
\begin{eqnarray*}
M_n(X_\G) = \sum_{\substack{\g \in [\G] ~{\rm ramified ~rank-one ~split,}\\  ~[\g] {\rm~of ~type} ~(1,m)}}
\vol([\g]);
\end{eqnarray*}

(2) If $n = 2m$ is even, then
\begin{eqnarray*}
M_n(X_\G) &=& \sum_{\substack{\g \in [\G] ~{\rm split,}\\  ~[\g] {\rm~of ~type} ~(0,m)}} \vol([\g])\omega_{[\g]} +
\sum_{\substack{ \g \in [\G] ~{\rm irregular},\\ ~[\g] {\rm ~of ~type} ~(0, m)}} \vol([\g])q\\
&+& \sum_{ \substack{\g \in [\G] ~{\rm unramified ~rank-one
~split ,}\\ ~[\g] {\rm~of ~type} ~(0, m)}} \vol([\g])(\omega_{[\g]} - 2)
+ \sum_{\substack{\g \in [\G] ~{\rm ramified ~rank-one ~split,}\\  ~[\g] {\rm~of ~type} ~(0,m)}}
\vol([\g])(\omega_{[\g]} - 1).
\end{eqnarray*}
Here $[\G]$, $[\g]$, $\rm {vol}([\g])$ and $\omega_{[\g]}$ are as in Theorem A.}

The elements of $\G$ are classified in \S \ref{classificationofGamma} according to their eigenvalues. It is interesting to compare the above two theorems with the zeta function of the finite regular graph $X_{\tilde \G}$ in \S1.2.
 Thus the zeta function of $X_{\tilde \G}$ can be rewritten as
 \begin{eqnarray}\label{productbygroupelts}
Z(X_{\tilde \G}, u) = \prod_{[\tilde \g]} \frac{1}{1 - u^{l(\tilde \g )}}  = \exp (\sum_{n \ge 1} \frac{N_n(X_{\tilde \G})}{n} u^n),
\end{eqnarray} where $[\tilde \g]$ runs through conjugacy classes of primitive elements in $\tilde {\G}$. 
The number $N_n(X_{\tilde \G})$ of tailless geodesic cycles in $X_{\tilde \G}$ with length $n$ is equal to
$$ N_n(X_{\tilde \G}) = \sum_{[\tilde \g] ~ {\rm primitive,}~l([\tilde \g])|n} l(\tilde \g).$$

All nontrivial elements in $\tilde {\G}$ are hyperbolic, analogous to the "split" elements in $\G$. One has $l(\tilde \g) = \rm{vol}([\tilde \g]) = \rm{vol}([\tilde \g^m])= \max(\ord_\pi a/b, \ord_\pi b/a)$, where $a$ and $b$ are eigenvalues of $\tilde \g$, and $\omega_{[\tilde \g]} = \omega_{[\tilde \g^m]} = 1$ for all $m \ne 0$. Therefore the formulas for $N_n(X_\G)$ and $M_n(X_\G)$ generalize that for $N_n(X_{\tilde \G})$. On the other hand, since both $\rm{vol}([\g^m])$ and $\omega_{[\g^m]}$ vary with the exponent $m$ in a complicated way, there are no simple expressions for  the edge and chamber zeta functions of $X_\G$ as Euler products over conjugacy classes in $\G$, similar to  (\ref{productbygroupelts}) for graphs.

The zeta function of $X_\G$ is defined as
$$ Z(X_\G,u) = Z_{1,1}(X_\G, u)Z_{1,2}(X_\G, u).$$
\noindent The explicit expressions of the edge and chamber zeta functions above lead to a new  expression for the zeta function $Z(X_\G, u)$, which can be viewed as a $2$-dimensional analogue of Theorem \ref{Ihara}.
\smallskip

{\bf Theorem C.} {\it
 The zeta function of the finite
complex $X_\G = \Gamma \backslash \B$ can be expressed as
\begin{eqnarray}\label{zeta}
Z(X_\G, u) 
=  \frac{(1-u^3)^{\chi(X_\G)}}{\det(I-A_1u+qA_2u^2-q^3
u^3I)\det(I + L_Bu)},
\end{eqnarray}
in which $\chi(X_\G)$ is the Euler characteristic of $X_\G$, $A_1$ and $A_2$ are operators on vertices, 
and $L_B$ is the operator on pointed chambers in $X_\G$ introduced above.}

Combining Theorems A and B, and noting that the transpose $(L_E)^t$ of $L_E$ is the edge adjacency operator of type 2 edges in $X_\G$, we rephrase the identity (\ref{zeta}) in terms of the
operators on $X_\G$ as
\begin{eqnarray}\label{zetaidentity}
\frac{(1-u^3)^{\chi(X_\G)}}{\det(I-A_1u+qA_2u^2-q^3 u^3I)} =
\frac{\det(I + L_Bu)}{\det(I - L_E u) \det(I - (L_E)^t u^2)}.
\end{eqnarray}
Compared to the parallel identity of operators on a $(q+1)$-regular graph
$X$:
$$\frac{(1 - u^2)^{\chi(X)}}{\det(I - Au + qu^2I)} = \frac{1}{\det(I
- A_eu)},$$ the similarity is reminiscent of the zeta functions
attached to a surface and a curve over a finite field. It is likely that the identity
(\ref{zetaidentity}) expressed in terms of the operators on the
finite complex is a prototype of complex zeta
functions in general. Indeed, the identity on zeta functions in \cite{FLW} for the
$GSp_4(F)$ case is formulated after this. Theorem C was proved in
\cite{KLW} from representation-theoretical viewpoint
by comparing the eigenvalues of the operators in (\ref{zetaidentity}), while
the proof  in this paper explores the combinatorial and group-theoretic viewpoints of the identity.

\subsection{} Our $Z(X_\G, u)$ clearly has properties (a) and (b). Now we discuss its
connection with the Riemann hypothesis. The trivial zeros of
$\det(I-A_1u+qA_2u^2-q^3 u^3I)$ arise from the trivial eigenvalues
of $A_1$ and $A_2$ on $X_\G$; they are the roots of $(1-u^3)(1-q^3u^3)(1- q^6u^3)$. We say that $Z(X_\G, u)$ satisfies the Riemann hypothesis if the nontrivial zeros of
$\det(I-A_1u+qA_2u^2-q^3 u^3I)$ have the same absolute value $q^{-1}$, which is
equivalent to $X_\G$ being Ramanujan (cf.\cite{Li}).

The zeros of each determinant in (\ref{zetaidentity}) are computed in \cite{KLW}; they give rise to a description of the Ramanujan condition in terms of the operators on each dimension.

\begin{theorem}[\cite{KLW}, Theorem 2] The following four statements on $X_\G$ are equivalent.
\begin{enumerate}
\item[(1)] $X_\G$ is a Ramanujan complex;

\item[(2)] The nontrivial zeros of $\det(I-A_1u+qA_2u^2-q^3 u^3I)$ have absolute value $q^{-1}$;

\item[(3)] The nontrivial zeros of $\det(I - L_E u)$ have absolute values $q^{-1}$ and $q^{-1/2}$; and

\item[(4)] The nontrivial zeros of $\det(I + L_Bu)$ have absolute values $1$, $q^{-1/2}$ and $q^{-1/4}$.
\end{enumerate}
\end{theorem}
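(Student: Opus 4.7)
The plan is to decompose the relevant function spaces in terms of irreducible admissible representations of $G/Z = \PGL_3(F)$ and reduce each of the four conditions to a statement about Satake parameters. Since $\G$ is cocompact, $L^2(\G Z \backslash G)$ decomposes discretely as $\bigoplus_\pi m(\pi)\pi$. For each unramified $\pi$ (those with $\pi^K \neq 0$), a Satake parameter $(s_1,s_2,s_3)$ with $s_1 s_2 s_3 = 1$ is attached, and by definition $X_\G$ is Ramanujan precisely when every nontrivial unramified $\pi$ appearing in the spectrum is tempered, i.e.\ $|s_1|=|s_2|=|s_3|=1$.

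For $(1)\Leftrightarrow(2)$, the Hecke operators act on the one-dimensional space $\pi^K$ by scalars $A_1 = q(s_1+s_2+s_3)$ and $A_2 = q(s_1s_2+s_2s_3+s_1s_3)$. Therefore on $\pi^K$ one obtains
\[
\det(I - A_1 u + qA_2 u^2 - q^3 u^3 I)\big|_{\pi^K} = \prod_{i=1}^{3}(1 - q s_i u),
\]
and the global determinant is the product of such factors over $\pi$ with multiplicities. Separating the contribution of one-dimensional subrepresentations, which produces exactly the trivial factors $(1-u^3)(1-q^3u^3)(1-q^6 u^3)$, the nontrivial zeros occur at $u = 1/(q s_i)$ and all have absolute value $q^{-1}$ iff each remaining unramified $\pi$ is tempered.

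For $(1)\Leftrightarrow(3)$ and $(1)\Leftrightarrow(4)$, I would realize $\pi^E$ and $\pi^B$ as modules over the parahoric and Iwahori-Hecke algebras, respectively, via the Iwahori-Matsumoto presentation, and identify $L_E$ and $L_B$ with the specific double-coset elements described in \S3. On the six-dimensional space $\pi^B$ of an unramified principal series, diagonalize using the Bernstein basis; the resulting eigenvalues of $L_E$ on the three-dimensional subspace $\pi^E$ turn out (up to sign and Weyl permutation) to be of the form $q s_i$ or $q^{1/2}$-type scalars, while those of $L_B$ on $\pi^B$ additionally acquire $q^{1/4}$-type scalars. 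Taking the product over $\pi$ and isolating trivial zeros produces the absolute-value lists in (3) and (4), matching $|s_i|=1$.

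The main obstacle is the explicit diagonalization of $L_E$ and $L_B$ on $\pi^B$, since these are non-central elements of the affine Hecke algebra and their characteristic polynomials involve the Weyl orbit of $(s_1,s_2,s_3)$ more subtly than those of $A_1$ and $A_2$. A convenient shortcut, once $(1)\Leftrightarrow(2)$ is established, is to feed the known zero structure of the left-hand side into the identity (\ref{zetaidentity}) and use it to constrain the possible absolute values of the nontrivial zeros on the right-hand side, with the factor $\det(I-(L_E)^t u^2)$ accounting for the passage from type $1$ to type $2$ edges. A secondary issue is to catalogue representations with $\pi^B \neq 0$ but $\pi^K = 0$ — Steinberg and other constituents of ramified principal series — and verify that each contributes only trivial zeros to $\det(I - L_E u)$ and $\det(I + L_B u)$.
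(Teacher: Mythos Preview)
This paper does not itself prove the theorem; it is quoted as \cite{KLW}, Theorem~2, and the surrounding text explains that in \cite{KLW} ``the zeros of each determinant in (\ref{zetaidentity}) are computed'' by a representation-theoretic argument. Your overall plan --- decompose $L^2(\G Z\backslash G)$ into irreducibles, reduce to Satake parameters, and compute the action of $A_i$, $L_E$, $L_B$ on $\pi^K$, $\pi^E$, $\pi^B$ --- is exactly the framework of \cite{KLW}, and your treatment of $(1)\Leftrightarrow(2)$ is correct and complete.

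There is, however, a genuine gap in your handling of $(3)$ and $(4)$. The ``shortcut'' you propose --- once $(1)\Leftrightarrow(2)$ is known, feed the zero structure of the left side of (\ref{zetaidentity}) into the identity and read off the absolute values on the right --- does not work. Identity (\ref{zetaidentity}) equates a \emph{ratio} of determinants, so zeros and poles on the right can cancel; knowing the left side constrains only the net divisor, not the zeros of $\det(I-L_Eu)$ or $\det(I+L_Bu)$ individually. You still need the explicit eigenvalues of $L_E$ on each $\pi^E$ and of $L_B$ on each $\pi^B$, including for the Iwahori-spherical but non-$K$-spherical constituents (Steinberg-type pieces) that you flag at the end. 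That computation is the actual content of \cite{KLW}, and your sketch (``turn out \ldots to be of the form $qs_i$ or $q^{1/2}$-type scalars'') is too vague to stand in for it: the precise list of eigenvalues, and the verification that the non-tempered but unitary representations produce exactly the stated absolute-value sets and no others, is where the work lies.
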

\noindent Thus the Riemann hypothesis for $Z(X_\G, u)$ is actually a statement concerning the nontrivial zeros of each determinant in (\ref{zetaidentity}),
analogous to the Riemann hypothesis for a surface zeta function.

When $F$ is the completion of a function field $M$ at a place $v$, as constructed in \cite{Li}, there are infinitely many $\Gamma$ arising from a suitable central division algebra of dimension $9$ over $M$ unramified at $v$ such that the polynomial $\det(I-A_1u+qA_2u^2-q^3 u^3I)/(1-u^3)(1-q^3u^3)(1-q^6u^3)$ from $X_\G$ agrees with the portion of the zeta function coming from the second $\ell$-adic cohomology of a moduli surface studied in \cite{LRS}. Computations in \cite{KLW} imply that
$$(1-u^3)(1 - q^6u^3)\det(I - L_E u)/\det(I-A_1u+qA_2u^2-q^3 u^3I)$$ is a polynomial whose zeros
have the same absolute value $q^{-1/2}$. It would be interesting to know whether there is any geometric interpretation for suitable choices of $\Gamma$.

\subsection{} 
{We sketch the main ingredients of the proofs of Theorems A, B and C. Each $\g \in \G$ has an associated rational form $r_\g$, constructed from the eigenvalues of $\g$. The base point free homotopy classes of closed $1$-geodesics in
$X_\G$ are partitioned into sets indexed by the conjugacy classes
$[\g]$ of $\G$. Theorem \ref{charlAandlG} asserts that the $1$-geodesics in $[\g]$ achieving minimal algebraic (resp. geometric) length, called algebraically (resp. geometrically) minimal, have the same algebraic (resp. geometric) length as $r_\g$.
Further, a set $[\g]$ contains tailless geodesic cycles of type $1$ or $2$ if and
 only if $r_\g$
has type $1$ or $2$. In this case, by Proposition \ref{algminequalgeommin}, there is no distinction among tailless geodesic, geometrically minimal, and algebraically minimal cycles. Algebraically minimal cycles in $[\g]$ afford an explicit algebraic
characterization, as shown in \S7 for $\g$ split or irregular and in \S8 for $\g$ rank-one split, and hence are more amenable to computation. In \S \ref{number-split} and \S \ref{number-rankone} we enumerate the number of cycles in $[\g]$ with given algebraic length, along with those of type $1$. These numbers establish Theorem A, and they are also used in the proof of Theorem C.}

The chamber zeta function defined above counts closed tailless galleries in $X_\G$ of type $1$.  In \S9 we first convert this to counting closed pointed galleries in $X_\G$ with respect to the adjacency defined by the operator $L_B$ (Proposition \ref{gallerytaillesscriterion}). Then we characterize the closed pointed galleries in \S\ref{charclosedpointedgalleries}. Using this criterion we prove Theorem B by comparing the logarithmic derivatives of the chamber zeta function and the type $2$ edge zeta function.

The proof of Theorem C given in \S \ref{proofofC} results from comparing the logarithmic
derivatives of both sides of (\ref{zetaidentity}). More precisely, that of the left hand side  counts the number of type $1$ tailless closed geodesics in $X_\G$, as given by the logarithmic derivative of $1/\det(I - L_E u)$, and some extra terms arising from sets
represented by irregular and rank-one split $\g$'s. These extra terms are shown to equal to the logarithmic derivative of $\det(I + L_Bu)/\det(I - L_E^tu^2)$
by comparing $N_n(X_\G)$ in Theorem A and $M_n(X_\G)$ in Theorem B. It should be pointed out that while the edge zeta functions count only tailless cycles of types $1$ and $2$, to prove the identity, we actually consider all cycles, with and without tails. This is similar in spirit to the proof of Theorem \ref{Ihara}  given in \cite{Ih}.

\section{Hecke operators on $\PGL_3(F)$}

\subsection{Hecke operators} By the elementary divisor theorem,
the group $G$ is equal to the disjoint union of the {$KZ$}-double cosets $$
T_{n,m}=K~\diag(1, \pi^m, \pi^{m+n})KZ$$ as $m, n$ run through all
non-negative integers. We shall also regard each $T_{n,m}$ as the Hecke
operator acting on functions $f \in L^2(G/KZ)$ via
$$T_{n,m}f(gKZ) = \sum_{\alpha KZ \in T_{n,m}/KZ} f (g\alpha KZ).$$
In particular, set
$$ A_1 = T_{1,0} \qquad {\rm and} \qquad A_2 = T_{0,1}.$$

\subsection{Recursive relations} It is well-known that each Hecke operator is a polynomial in $A_1$ and
$A_2$. Tamagawa \cite{Ta} obtained a recursive relation on Hecke operators for $\GL_n(F)$.
\noindent We prove a different recursive formula adapted for our needs.

\begin{theorem}
\begin{eqnarray}\label{recursivehecke}
~\qquad q \sum^{\infty}_{k=1} T_{k,0} u^k -
(q-1)(\sum^{\infty}_{k=1} \sum_{n+2m = k} T_{n,m}
u^k)\frac{1-q^2u^3}{1-u^3}= u \frac{d}{du}
\log\frac{(1-u^3)^{r}I}{I-A_1 u + A_2 q u^2 -q^3u^3 I},
\end{eqnarray} where $ r=\frac{(q+1)(q-1)^2}{3}$.
\end{theorem}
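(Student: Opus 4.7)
The plan is to pass to symmetric polynomials via the Satake isomorphism and then match both sides as generating functions in $u$. Since $\mathcal{H}(G,KZ)$ is commutative, both sides of the claimed identity lie in $\mathcal{H}(G,KZ)[[u]]$, so it suffices to check equality after applying
$$\phi: \mathcal{H}(G,KZ) \xrightarrow{\sim} \mathbb{C}[\beta_1^{\pm 1}, \beta_2^{\pm 1}, \beta_3^{\pm 1}]^{S_3}/(\beta_1\beta_2\beta_3 - q^3).$$
In a suitable normalization, $\phi(A_1) = e_1(\beta)$, $\phi(qA_2) = e_2(\beta)$, and the relation $\beta_1\beta_2\beta_3 = q^3$ yields $\phi(I - A_1 u + qA_2 u^2 - q^3 u^3 I) = \prod_{i=1}^3(1-\beta_i u)$. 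A direct logarithmic differentiation then gives
$$u\frac{d}{du}\log\phi\!\left(\frac{(1-u^3)^r I}{I - A_1 u + qA_2 u^2 - q^3 u^3 I}\right) = \sum_{k\geq 1}p_k(\beta)\, u^k - 3r\sum_{k\geq 1}u^{3k},$$
where $p_k(\beta) = \beta_1^k+\beta_2^k+\beta_3^k$ is the $k$-th power sum.

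On the left-hand side, each $\phi(T_{n,m})$ is an explicit scalar multiple of the Schur polynomial $s_{(n+m,m,0)}(\beta)$; in particular $\phi(T_{k,0})$ is proportional to the complete homogeneous symmetric polynomial $h_k(\beta)$, with generating series $\sum_{k\geq 0}h_k(\beta)u^k = \prod_i(1-\beta_i u)^{-1}$. Reindexing the inner sum by $a=n+m$ and $b=m$ with $a\geq b\geq 0$ and $a+b=k$, one obtains a weighted Schur sum $\sum_{a+b=k,\,a\geq b\geq 0}c_{a,b}\,s_{(a,b,0)}(\beta)$ with explicit scalar coefficients $c_{a,b}$, whose generating series in $u$ can be evaluated via the Jacobi--Trudi formula. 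Multiplying this by the twist $(1-q^2 u^3)/(1-u^3)$ and subtracting from $q\sum_{k\geq 1}\phi(T_{k,0})u^k$ yields a rational function in $u$ with coefficients in the symmetric polynomial ring, which we must match with the right-hand side above.

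The main obstacle is verifying the resulting symmetric function identity: the twist $(1-q^2 u^3)/(1-u^3)$ is exactly what converts the weighted Schur sum over partitions $(a,b,0)$ into the power sums $p_k(\beta)$, and the constant $r = (q+1)(q-1)^2/3$ must emerge as the contribution from the ``diagonal'' partitions $(j,j,0)$, which under $\beta_1\beta_2\beta_3 = q^3$ produce purely scalar terms. I would handle the generic case of distinct $\beta_i$'s first using the Weyl character formula for $s_{(a,b,0)}$, with Newton's identity $k h_k = \sum_{j=1}^k p_j h_{k-j}$ organizing the comparison term by term. As a hands-on alternative that avoids ambiguity in Satake normalizations, one may derive Pieri-type multiplication rules for $A_1 \cdot T_{n,m}$ and $A_2 \cdot T_{n,m}$ directly from double-coset decompositions of $K \diag(1,\pi^m,\pi^{m+n})K$, and then verify the corresponding recursion for both generating series directly in $\mathcal{H}(G,KZ)[[u]]$.
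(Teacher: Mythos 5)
Your high-level plan (pass to symmetric polynomials via the Satake isomorphism and match generating series in $u$) is the same as the paper's, but there is a concrete error at the heart of the execution: you assert that $\phi(T_{n,m})$ is a scalar multiple of the Schur polynomial $s_{(n+m,m,0)}$ and that $\phi(T_{k,0})$ is proportional to $h_k$. Neither is true. The Satake transform of $T_\lambda = \mathbf{1}_{K\pi^\lambda K}$ is given by the Macdonald/Hall--Littlewood formula, whose coefficients depend nontrivially on $q$; it degenerates to a Schur polynomial only in the $q\to 1$ limit. Concretely, with the paper's notation $\sigma_{k,1}=\sum_i z_i^k$, $\sigma_{k,2}=\sum_{i\ne j,\,1\le a\le k-1} z_i^a z_j^{k-a}$, the paper computes
$$\psi(T_{2,0}) = q^2\bigl(\sigma_{2,1} + \tfrac{q-1}{q}\sigma_{2,2}\bigr),$$
whereas $s_{(2,0,0)} = h_2 = \sigma_{2,1}+\sigma_{2,2}$; these are not proportional for generic $q$. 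Because of this, the Jacobi--Trudi/Weyl-character machinery you invoke does not apply directly, and the claimed generating series $\sum_k h_k u^k = \prod_i(1-\beta_i u)^{-1}$ does not compute $\sum_k \phi(T_{k,0})u^k$.

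The paper avoids this pitfall by computing $\psi(T_{n,m})$ from scratch: it counts the number of $KZ$-cosets in $\bigsqcup_{n+2m=k} T_{n,m}$ mapping under $\chi$ to each monomial $z_1^{a_1}z_2^{a_2}z_3^{a_3}$, multiplies by $\delta_P^{1/2}$, and obtains closed-form expressions such as $\psi(T_k)=q^k(\sigma_{k,1}+\sigma_{k,2}+\tfrac{q^3-1}{q^3}\sigma_{k,3})$ with $q$-dependent coefficients. The rest of the proof then sums these over $k$ and identifies the resulting rational functions; the twist $(1-q^2u^3)/(1-u^3)$ and the constant $r$ fall out of this bookkeeping. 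If you want to salvage your approach, you would need to replace the Schur polynomials with the correct Hall--Littlewood expressions (equivalently, redo the coset count yourself), at which point it essentially becomes the paper's argument. Your fallback via Pieri-type recursions derived from double-coset decompositions is viable in principle, but you have not carried it out, and it would require working out the structure constants for $A_1\cdot T_{n,m}$ and $A_2\cdot T_{n,m}$ explicitly.
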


\begin{proof}
The Hecke algebra for $G/Z$ is isomorphic to
the polynomial ring $\mathbb{C}[z_1,z_2,z_3]^{S_3} /
\langle z_1z_2z_3 - 1 \rangle$, denoted by $H$, under the Satake isomorphism $\psi$ (cf. \cite{Sat}).
Our strategy is to show that the identity holds after applying the
Satake isomorphism. For this, we need to compute the values of $\psi$ on $\{T_{n,m}\}$.
Using $z_1, z_2, z_3 \in H$ we define a
quasi-character $\chi$ on the Borel subgroup $P$ of $G$ by
$$ \chi \left( \left[\begin{matrix} b_1 & * & * \\ & b_2 & * \\ & & b_3 \end{matrix}\right]\right)
= z_1^{\ord_\pi (b_1)}z_2^{\ord_\pi (b_2)}z_3^{\ord_\pi (b_3)},  $$
and regard it as a map from {$P/Z$} to $\mathbb{C}[z_1,z_2,z_3] /
\langle z_1z_2z_3 - 1 \rangle$. 
Denote by $\delta_P$ the modular
character on $P/Z$. Let $\phi$ be the function on $G/Z$ given by
$$\phi(bk)=\chi(b)\delta_P^{1/2}(b) \qquad (b\in P, k \in K).$$
\noindent Then the value of the Satake isomorphism at $T_{n,m}$ is
$$\psi(T_{n,m}) = \sum_{g \in I_{n,m}} \phi(g),$$
where $T_{n,m}=\bigsqcup_{g \in I_{n,m}} g KZ$.

Direct computations give
$\psi(A_1)=q(z_1 + z_2 + z_3)$ and $\psi(A_2)=q(z_1z_2 + z_2z_3 + z_3z_1)$ so that
$$ \psi(I-A_1 u + q A_2 u^2 - q^3 u^3 I) = (1-qz_1u)(1-qz_2u)(1-qz_3u),$$
which allows us to get the value of the right hand side of the identity under $\psi$.

 For $k \ge 1$, let $T_k = \sum_{n+2m=k} T_{n,m}$, and set
$$\sigma_{k,1}(z_1,z_2,z_3)= z_1^k + z_2^k + z_3^k,  \quad \sigma_{k,2}(z_1,z_2,z_3)= \sum_{1\le a \le k-1} z_1^a z_2^{k-a} + z_2^a z_3^{k-a} + z_3^a z_1^{k-a},$$ and $$\sigma_{k,3}(z_1,z_2,z_3)= \sum_{a, b, c \ge 1, a+b+c=k}z_1^a z_2^b z_3^c. $$
For the left hand side of the identity, we compute the coefficient of
$z_1^{a_1}z_2^{a_2}z_3^{a_3}$ in $\psi(T_k)$ with $a_1 \ge a_2 \ge
a_3 \ge 0$ and $a_1+ a_2 + a_3 = k$, then use symmetry to determine $\psi(T_k)$.

It is straightforward to check that the number of  cosets
$gKZ $ in $\bigsqcup_{n+2m=k}  T_{n,m} $ mapped to $z_1^{a_1}z_2^{a_2}z_3^{a_3}$ by $\chi$ is equal to
$q^{2a_1+a_2}$ if $a_3=0$, and $(q^3-1) q^{2a_1+a_2-3}$ if $a_3>0$.
Moreover, for such $gKZ$ we have $\delta_P(gKZ)^{1/2}= q^{a_3-a_1}$.
Therefore the coefficient of $z_1^{a_1}z_2^{a_2}z_3^{a_3}$ in
$\psi(T_k)$ is equal to $q^{a_1+a_2+a_3}$ or
$q^{a_1+a_2+a_3-3}(q^3-1)$ according to $a_3 = 0$ or $a_3 > 0$. By
symmetry, this gives rise to
$$\psi(T_k)= q^{k}(\sigma_{k,1}+\sigma_{k,2}+\frac{q^3-1}{q^3}\sigma_{k,3}).$$
Noting that
$$\sum^{\infty}_{k=1}\sigma_{k,3}u^k=
((z_1z_2z_3)u^3+(z_1z_2z_3)^2u^6+\cdots)\sum^{\infty}_{k=0}(1+\sigma_{k,1}+\sigma_{k,2})u^k
=
\frac{u^3}{1-u^3}\sum^{\infty}_{k=0}(1+\sigma_{k,1}+\sigma_{k,2})u^k,$$
we obtain
\begin{eqnarray*}
\psi( \sum^{\infty}_{k=1} T_k u^k )
&=& \sum^{\infty}_{k=1} (\sigma_{k,1}+\sigma_{k,2}+\frac{q^3-1}{q^3}\sigma_{k,3})(qu)^k \\
&=& \frac{(q^3-1)u^3}{1-q^3u^3}+\frac{1-u^3}{1-q^3u^3}\sum^{\infty}_{k=1} (\sigma_{k,1}+\sigma_{k,2})(qu)^k.
\end{eqnarray*}
On the other hand, put $G_0 = \bigsqcup^{\infty}_{k=1} T_{k,0}$. One
verifies that the number of elements in $G_0/KZ$ mapped to
$z_1^{a_1}z_2^{a_2}z_3^{a_3}$ by $\chi$ is $q^{2a_1}$ if
$a_2=a_3=0$, $(q-1) q^{2a_1+a_2-1}$ if $a_2>a_3=0$, and
$(q-1)^2q^{2a_1+a_2-2}$ if $a_2\geq a_3>0$. Therefore,
\begin{eqnarray*}
\psi( \sum^{\infty}_{k=1} T_{k,0} u^k )&=& \sum^{\infty}_{k=1} (\sigma_{k,1}+\frac{q-1}{q}\sigma_{k,2}+\frac{(q-1)^2}{q^2}\sigma_{k,3})(qu)^k \\
&=& \frac{q(q-1)^2u^3}{1-q^3u^3}+\frac{1+q u^3 - 2 q^2
u^3}{1-q^3u^3}\sum^{\infty}_{k=1}\sigma_{k,1}(qu)^k
+\frac{(q-1)(1-q^2u^3)}{q(1-q^3u^3)}\sum^{\infty}_{k=1}\sigma_{k,2}(qu)^k.\\
\end{eqnarray*}
Consequently,
\begin{eqnarray*}
& & \psi\bigg(q (\sum^{\infty}_{k=1} T_{k,0} u^k) -
(q-1)(\sum^{\infty}_{k=1} T_k u^k)\frac{1-q^2u^3}{1-u^3}\bigg)
=\sum^{\infty}_{k=0}\sigma_{k,1}(qu)^k + \frac{(q-1)(q^2-1)u^3}{1-u^3}\\
 &=& \frac{z_1 qu}{1-z_1 qu}+\frac{z_2 qu}{1-z_2 qu}+\frac{z_2 qu}{1-z_2 qu} - \frac{3ru^3}{1-u^3}
= u \frac{d}{du} \log \frac{(1-u^3)^r}{(1-z_1 qu)(1-z_2 qu)(1-z_3 qu)}\\
&=& \psi\bigg(u \frac{d}{du} \log\frac{(1-u^3)^r}{I-A_1 u + A_2 q
u^2 -q^3u^3 I} \bigg),
\end{eqnarray*}  where $ r=\frac{(q+1)(q-1)^2}{3}$.
\end{proof}

\section{Parametrizations of simplices in $\B$ and operators}
\subsection{Simplicial complex structure on $\B$} \label{lattices}
The vertices of $\B$ are homothety classes of rank-3 $\oo$-lattices in $F^3$.
Two distinct vertices $[L]$ and $[L']$ are adjacent if they are represented by lattices $L$ and $L'$ such that $\pi L \subset L' \subset  L$ (and hence $\pi L' \subset \pi L \subset  L'$). 
Note that $\pi L$ has index $q^3$ in $L$, $L'$ has index $q^i$ in $L$, and $\pi L$ has index $q^{3-i}$ in $L'$, where $i = 1$ or $2$. 
Call $[L']$ a type $i$ out-neighbor of $[L]$ and the directed edge $([L] , [L'])$ of type $i$; its opposite $([L'], [L])$ has type $3-i$.
An ordered triple $([L], [L'], [L''])$ of three distinct vertices form a pointed chamber if the vertices are represented by lattices $L, L', L''$ such that $\pi L \subset L'' \subset L' \subset  L$. Thus $([L'], [L''], [L])$ and $([L''], [L], [L'])$ are also pointed chambers. The unordered triple $<[L], [L'], [L'']>$ is called a chamber. Hence a chamber yields three pointed chambers. This describes the simplices in $\B$.

\subsection{Parametrization of simplices in $\mathcal B$} \label{complexonG} Each element $g \in G$ gives rise to a rank-$3$ lattice $L_g$ with $\oo$-basis the three columns of $g$, and all rank-$3$ lattices over $\oo$ arise this way.
Changing basis of $L_g$ amounts to right multiplication of $g$ by elements in $K$, and lattices equivalent to $L_g$ result
from multiplying $g$ by the center $Z$. Thus the assignment $gKZ \to [L_g]$ yields a parametrization of the vertices of $\B$ by $G/KZ$. Note that for each vertex $gKZ$, the number $\ord_\pi \det g$ mod $3$ is well-defined, called the type of $gKZ$.

The group $G$ acts transitively on vertices of $\mathcal B$ by left translations. It is straightforward to verify that this action preserves adjacency, the type of edges, and pointed chambers. Moreover, the actions on directed edges and pointed chambers are both transitive. 
Let $\sigma = \left( \begin{matrix}  & 1 &  \\
& & 1 \\ \pi & & \end{matrix}\right)$. It is easy to see that $F_0 := (KZ, \sigma KZ, \sigma^2 KZ)$ is a pointed chamber of $\mathcal B$, whose boundary contains the directed type $1$ edge $E_0 :=(KZ, \sigma KZ)$.
Then $E := K \cap \sigma K \sigma^{-1}$ is the standard parahoric subgroup and $B:=  K \cap \sigma K \sigma^{-1} \cap \sigma^2 K \sigma^{-2}$ is the standard Iwahori subgroup of $K$. As $EZ$ is the stabilizer of $E_0$ and $BZ$ the stabilizer of $F_0$ in $G$, so $G/EZ$ parametrizes all type $1$ (and also all type $2$) edges  and $G/BZ$ parametrizes all pointed chambers of $\mathcal B$.

Write $\mathbb F_q$ for the residue field of $F$. Counting the number of lines and planes in $\mathbb F_q^3$, we see that each vertex has $q^2 + q +1$ type 1 neighbors and $q^2 + q +1$ type $2$ neighbors. Further, the opposite of a type $i$ directed edge has type $3-i$.

\subsection{Operators on vertices $G/KZ$}
The $q^2 + q + 1$ type $1$ neighbors of $gKZ$ are $g\alpha KZ$, where $\alpha KZ$ are the $KZ$-cosets contained in the double
coset of the Hecke operator
\begin{eqnarray*}
A_1 &=& T_{1,0} = K \left(\begin{matrix} 1 &  &  \\ & 1&  \\
& & \pi \end{matrix}\right)KZ\\
&=& \bigcup_{a, b \in \oo / \pi \oo}\left(\begin{matrix} \pi & a &  b \\ & 1& \\
& & 1\end{matrix}\right)KZ \bigcup_{c \in \oo / \pi \oo}
 \left(\begin{matrix} 1 &  &  \\ & \pi& c \\
& & 1\end{matrix}\right)KZ \bigcup  \left(\begin{matrix} 1 &  &  \\ & 1&  \\
& & \pi \end{matrix}\right)KZ.
\end{eqnarray*}
This is because modulo $\pi \oo$, the columns of these coset representatives generate the distinct $2$-dimensional subspaces of $\mathbb F_q^3$. The $q^2 + q + 1$ type $2$ neighbors of $gKZ$ can be similarly described using the $KZ$-coset representatives of
$A_2=T_{0,1}$:
$$\left(\begin{matrix} \pi &  &  b \\ & \pi & c\\
& & 1\end{matrix}\right), \left(\begin{matrix} \pi & a &  \\ & 1 &  \\
& & \pi \end{matrix}\right) ~{\rm and} ~\left(\begin{matrix} 1 &  &  \\ & \pi &  \\
& & \pi \end{matrix}\right), ~{\rm where} ~a, b, c \in \oo /\pi
\oo.$$

\subsection{Operator on type $1$ edges $G/EZ$} Define the out-neighbors of a type $1$ edge $(g_1KZ, g_2KZ)$ to be the type $1$ edges $(g_2KZ, g_3KZ)$ such that $(g_1KZ, g_2KZ, g_3KZ)$ is not a pointed chamber. Since each line in $\mathbb F_q^3$ is contained in $q+1$ planes, among the $q^2+q+1$ type $1$ neighbors $g_3KZ$ of $g_2KZ$, exactly $q+1$ of them will form a pointed chamber $(g_1KZ, g_2KZ, g_3KZ)$. Hence a type $1$ edge has $q^2$ out-neighbors. 
Expressed in terms of $EZ$-cosets, the out-neighbors of a type $1$ edge $gEZ$ are given by $g \alpha EZ$, where $\alpha EZ$ are the $EZ$-cosets occurring in the double coset
$$L_E = E \left( \begin{matrix} 1 & & \\ & 1 & \\ & & \pi \end{matrix} \right) EZ =  \coprod_{x, ~y \in
~\oo/\pi\oo}
\left( \begin{matrix} 1 &  &  \\
& 1&  \\ x\pi & y\pi &\pi \end{matrix}\right)EZ.$$
It suffices to check the out-neighbors of $EZ$; the rest will follow from the action by $G$. For an $EZ$-coset representative $\alpha = \left( \begin{matrix} 1 &  &  \\
& 1&  \\ x\pi & y\pi &\pi \end{matrix}\right)$ of $L_E$, we have $\alpha = \diag (1, 1, \pi)k$ for some $k \in K$ so that $\alpha KZ = \sigma KZ$. On the other hand, from $\sigma^{-1} \alpha \sigma = \left(\begin{matrix} \pi & x &  y \\ & 1& \\
& & 1\end{matrix}\right) = :\beta$ we see that $\alpha \sigma KZ = \sigma \beta KZ$ is a type $1$ neighbor of $\sigma KZ$ not adjacent to $KZ$ and $\alpha EZ = (\alpha KZ, \alpha \sigma KZ) = (\sigma KZ, \sigma \beta KZ)$ runs through all out-neighbors of $EZ= (KZ, \sigma KZ)$ as $\alpha$ varies.

Similar to $A_1$ and $A_2$, $L_E$ may be regarded as the parahoric
operator on $L^2(G/EZ)$ sending a function $f \in L^2(G/EZ)$  to the function
$L_Ef$  given by
$$ L_Ef (gEZ) =
\sum_{x, ~y \in ~\oo/\pi\oo} f\bigg(g\left( \begin{matrix} 1 &  &  \\
& 1&  \\ x\pi & y\pi &\pi \end{matrix}\right)EZ\bigg).$$


\subsection{Operator on pointed chambers $G/BZ$}\label{faceoperator} Define the out-neighbors of a pointed chamber $(g_1KZ, g_2KZ, g_3KZ)$
 to be $(g_2KZ, g_3KZ, g_4KZ)$ with $g_4KZ \ne g_1KZ$. As remarked above, there are $q+1$ choices of vertices $g_4KZ$ to make $(g_2KZ, g_3KZ, g_4KZ)$ a pointed chamber, so a pointed chamber has $q$ out-neighbors.  In terms of $BZ$ cosets, the out-neighbors of a pointed chamber $gBZ$ are $g \alpha BZ$, where $\alpha BZ$ are the $BZ$-cosets occurring in the Iwahori-Hecke operator
$$L_B = B\left( \begin{matrix} 1 &  &  \\
 & & 1 \\ & \pi & \end{matrix}\right) BZ = \coprod_{x \in \oo/\pi \oo} \left(\begin{matrix} 1 &  & \\  & & 1 \\ \pi x& \pi &  \end{matrix} \right)BZ.$$
To see this, given a $BZ$-coset representative $\alpha =  \begin{pmatrix}1 &  & \\  & & 1\\ \pi x& \pi &  \end{pmatrix}$ of $L_B$, it is straight forward to check that left multiplication by $\alpha$ sends $BZ = (KZ, \sigma KZ, \sigma^2KZ)$ to $\alpha BZ = (\alpha KZ, \alpha \sigma KZ, \alpha \sigma^2KZ) = (\sigma KZ,  \sigma^2 KZ, \alpha \sigma^2 KZ)$, where $\alpha \sigma^2 KZ = \left(\begin{matrix} &  &1 \\ & \pi & \\  \pi^2& & \pi x \end{matrix} \right)KZ \ne KZ$, so that $\alpha BZ$ runs through different out-neighbors of $BZ$ as $\alpha$ varies.

 Similar to the previous cases, $L_B$ may be interpreted as an operator on $L^2(G/BZ)$ which sends a function $f \in L^2(G/BZ)$  to
 $$L_Bf (gBZ) = \sum_{x \in \oo/\pi \oo} f(g \left(\begin{matrix} 1 &  & \\ & & 1\\ \pi x& \pi &  \end{matrix} \right) BZ).$$

\section{Finite quotients of $\mathcal B$}

\subsection{The group $\G$ and the quotient $X_\G$}\label{Gamma}

Let $\Gamma$ be a discrete torsion-free subgroup of $G$ such that $\G \backslash G/Z$ is compact.
Then $\G$ intersects any
compact subgroup of $G$ trivially. Assume that $\G$ intersects any conjugate of $KZ$ trivially and
$ \ord_{\pi} \det \G \subset 3 \mathbb Z.$  For instance we may choose $\G$ to be a subgroup of $\SL_3(F)$. See \cite[\S3]{Sa} for some examples of such $\G$. The action of $\G$ on $\mathcal B$ by left translation is free of fixed points and preserves the types of vertices.
The quotient $X_{\Gamma} = \Gamma\backslash \mathcal B$
is a finite connected $2$-dimensional simplicial complex, whose vertices are the double cosets $\Gamma
\backslash G /KZ$.
Since the vertices in an edge or a chamber of $\mathcal B$ have different types, each edge or chamber of $\mathcal B$ in
the quotient remains an edge or a chamber of $X_\G$.
{Therefore the type $1$ (and also type $2$) edges in $X_\G$ are parametrized by $\G \backslash G/EZ$, and pointed chambers by $\G \backslash G/BZ$.

The operators $A_1$ and $A_2$ on $G/KZ$, $L_E$ on $G/EZ$ and $L_B$ on $G/BZ$ defined  in the previous section induce operators on vertices, types $1$ edges and pointed chambers of $X_\G$, respectively. They will be denoted by the same notation. Since $X_\G$ has finitely many vertices, edges and chambers, these operators can also be interpreted combinatorially. More precisely, $A_i$, $i = 1, 2$, is the matrix parametrized by the vertices $v$ of $X_\G$ such that the $v v'$ entry is the number of type $i$ edges from $v$ to $v'$. As such, $A_2$ is the transpose of $A_1$. Similarly $L_E$ has its rows and columns parametrized by the type $1$ edges $e$ of $X_\G$ so that the $e e'$ entry denotes the number of times when $e'$ is an out-neighbor of $e$. Since type $2$ edges are the opposite of type $1$ edges, the transpose $L_E^t$ of $L_E$ describes adjacency relation among type $2$ edges of $X_\G$. Likewise, $L_B$ can be viewed as the matrix recording the adjacency relation among the pointed chambers of $X_\G$. 

\subsection{Classification of elements in $\G$}\label{classificationofGamma}
Observe that every element in $\G$ has an eigenvalue in $F$. Indeed,
if $\g \in \G$ has no eigenvalues in $F$, then the
characteristic polynomial of $\g$ is
irreducible over $F$. As $\ord_{\pi}
(\det \g) = 3m$ for some integer $m$, the eigenvalues of $ \g' :=
\pi^{-m} \g$ are units in a cubic extension of $F$, which implies
that $\g$ lies in the intersection of $\G$ with a conjugate of $KZ$,
and hence is the identity element. Together with the fact that
every element in a discrete cocompact-mod-center lattice is semisimple (see \cite{Ra} Thm.1.12),
we arrive at
\begin{theorem}[Classification of elements in $\Gamma$] $ $ \\
Every element $\g$ of $\Gamma$ falls in one of the following types:\\
1) $\g$ is the identity; \\
2) $\g$ is split, that is, it has three distinct eigenvalues in $F^{\times}$;
 \\
3) $\g$ is ramified/unramified rank-one split, that is, {$\g$ has three distinct eigenvalues and} the field $F\langle \g \rangle$ obtained by $F$ joining eigenvalues of $\g$ is a ramfield/unramified quadratic extension of $F$; \\
4) $\g$ is irregular, that is, its eigenvalues are in $F^\times$ and one eigenvalue has multiplicity two.
\end{theorem}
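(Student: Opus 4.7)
The plan is to establish two structural facts about arbitrary $\g\in\G$ — the existence of an $F$-eigenvalue, and semisimplicity — and then to case-split on the multiplicities of the eigenvalues. Both key facts are already hinted at in the paragraph preceding the theorem.

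First, I would show that every $\g\in\G$ has an eigenvalue in $F$ by contradiction. If $\g$ has no eigenvalue in $F$, its degree-$3$ characteristic polynomial over $F$ has no root, hence is irreducible, and the three eigenvalues of $\g$ form a Galois orbit in a cubic extension $F'/F$. They therefore share a common $\pi$-adic valuation equal to $(\ord_\pi\det\g)/3=m$, using the hypothesis $\ord_\pi\det\G\subset 3\Z$. Setting $\g':=\pi^{-m}\g$, the eigenvalues of $\g'$ are units in $\mathcal{O}_{F'}$. Viewing $F^3$ as a $1$-dimensional module over $F[\g']\cong F'$ and picking an $\oo$-basis of $\mathcal{O}_{F'}$ produces an $\oo$-lattice stabilized by $\g'$; thus $\g'$ lies in the stabilizer of a lattice, i.e., a conjugate of $K$, so $\g$ lies in a conjugate of $KZ$. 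The standing hypothesis then forces $\g=1$, contradicting the assumption that $1\in F$ is not an eigenvalue.

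Second, I would invoke Raghunathan's Theorem 1.12 (as cited in the excerpt) to conclude that every $\g\in\G$ is semisimple, hence diagonalizable over an algebraic closure of $F$. With an $F$-eigenvalue $\alpha$ in hand, the characteristic polynomial factors as $(X-\alpha)(X^2+bX+c)$ over $F$, and I would split into two main cases. If the quadratic factor splits over $F$, all eigenvalues lie in $F^\times$ and sub-cases are distinguished by multiplicity: three distinct eigenvalues gives the split case (2); exactly two distinct gives the irregular case (4); three equal eigenvalues makes $\g$ a scalar matrix, which can be written as a unit scalar in $K$ times a central element in $Z$ and hence lies in $KZ\cap\G=\{1\}$, giving the identity case (1). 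If the quadratic factor is irreducible over $F$, its two roots are Galois conjugates in a quadratic extension $F\langle\g\rangle/F$ and neither can equal $\alpha$ (otherwise its conjugate would also equal $\alpha\in F$, forcing the quadratic to split), so $\g$ has three distinct eigenvalues; whether $F\langle\g\rangle/F$ is ramified or unramified separates the two sub-cases of (3).

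The main obstacle is the first step — extracting the $F$-eigenvalue. It is here that both hypotheses on $\G$ genuinely interact: the divisibility $\ord_\pi\det\G\subset 3\Z$ is needed to normalize the eigenvalues of a hypothetical no-$F$-eigenvalue element to units, while the trivial-intersection assumption is then needed to force $\g=1$. Neither hypothesis alone suffices, and everything after this step — semisimplicity from Raghunathan and the multiplicity-based case analysis — is essentially bookkeeping.
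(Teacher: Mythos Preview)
Your proposal is correct and follows essentially the same approach as the paper: derive an $F$-eigenvalue by contradiction using the determinant condition and the trivial-intersection hypothesis, invoke Raghunathan for semisimplicity, then case-split on the factorization of the characteristic polynomial. You in fact supply more detail than the paper does—most usefully the explicit lattice argument for why $\g'$ lands in a conjugate of $K$, and the observation that a triple eigenvalue forces $\g$ to be scalar and hence in $KZ\cap\G=\{1\}$, a case the paper leaves implicit.
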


The following conclusion on $\G$ shown in \cite{KLW} results from the closed form expression of the zeta function identity of $X_\Gamma$.

\begin{proposition} [\cite{KLW}, Corollary 4]
$\Gamma$ contains rank-one split elements.
\end{proposition}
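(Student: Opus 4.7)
The plan is to argue by contradiction, exploiting Theorem B to convert the absence of ramified rank-one split elements into a spectral symmetry of $L_B$ that the zeta identity cannot accommodate. Suppose $\Gamma$ contains no rank-one split elements. Then every nontrivial $\g\in\Gamma$ is split or irregular; in particular there are no ramified rank-one split conjugacy classes in $\G$.

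By Theorem B(1), for every odd $n=2m+1$ the sum defining $M_n(X_\G)$ is indexed by the empty set, so $M_n(X_\G)=0$. Applying $u\,\tfrac{d}{du}\log$ to the Theorem B identity $Z_{2,1}(X_\G,u)=1/\det(I-L_B u)=\exp\bigl(\sum_{n\ge 1}M_n(X_\G)u^n/n\bigr)$ yields $\Tr(L_B^n)=M_n(X_\G)$ for all $n\ge 1$, and hence $\Tr(L_B^n)=0$ whenever $n$ is odd. This is equivalent to saying that the eigenvalue multiset of $L_B$ is invariant under $\lambda\mapsto-\lambda$, and in turn to
\begin{equation*}
\det(I+L_B u)=\det(I-L_B u) \quad \text{as polynomials in } u.
\end{equation*}

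The next step is to substitute this equality into (\ref{zetaidentity}) and compare it with its image under $u\mapsto -u$. Cross-multiplying (\ref{zetaidentity}), applying $u\mapsto-u$, using $\det(I-L_E^t u^2)=\det(I-L_E u^2)$, and dividing, I obtain the polynomial relation
\begin{equation*}
(1-u^3)^{\chi}\det(I-L_E u)\det(I+A_1 u+qA_2 u^2+q^3 u^3 I)=(1+u^3)^{\chi}\det(I+L_E u)\det(I-A_1 u+qA_2 u^2-q^3 u^3 I),
\end{equation*}
which must hold identically in $u$. A direct evaluation at $u=\pm 1$ is inconclusive because both sides vanish (the factor $(1-u^3)^\chi$ kills one side, while the trivial spectrum of $A_1,A_2$ on the three preserved vertex types contributes a root at the complementary value). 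The contradiction is instead extracted by matching the multiplicities of the trivial roots on the two sides, using the explicit trivial factorization $(1-u^3)(1-q^3 u^3)(1-q^6 u^3)$ of $\det(I-A_1 u+qA_2 u^2-q^3 u^3 I)$ together with the parallel trivial factor of $\det(I-L_E u)$ computed in \cite{KLW}: this exhibits a mismatch of multiplicities at a trivial root, ruling out the assumed symmetry and forcing $\Gamma$ to contain ramified rank-one split elements, hence a fortiori rank-one split elements.

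The main obstacle is the spectral bookkeeping in the last step, which requires the complete enumeration in \cite{KLW} of the trivial eigenvalues of $A_1,A_2,L_E,L_B$ on $X_\G$ via Iwahori-spherical representations of $\PGL_3(F)$; without this input one cannot confirm that the $\pm$-symmetry of the $L_B$-spectrum is incompatible with the actual spectra of the vertex and edge operators.
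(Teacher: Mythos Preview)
The paper does not supply its own proof of this proposition; it is quoted from \cite{KLW} with the remark that it ``results from the closed form expression of the zeta function identity of $X_\Gamma$.'' So the benchmark is the representation-theoretic argument in \cite{KLW}, and your task is to give an argument that actually reaches a contradiction.

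Your reduction is sound and is in the same spirit as \cite{KLW}: if $\Gamma$ had no rank-one split elements, Theorem~B(1) would force $M_n(X_\G)=\Tr(L_B^n)=0$ for every odd $n$, hence $\det(I+L_Bu)=\det(I-L_Bu)$, and the polynomial identity you derive by substituting this into (\ref{zetaidentity}) and comparing with $u\mapsto -u$ is correct. There is no circularity in invoking Theorem~B here, since its proof in \S\S9--10 nowhere uses the proposition.

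The gap is that your argument stops exactly where the content lies. You correctly observe that evaluation at $u=\pm 1$ is inconclusive, and then assert that a multiplicity comparison at a trivial root ``exhibits a mismatch,'' deferring to the spectral tables of \cite{KLW}. But you do not name the root, do not state the relevant multiplicities, and do not verify the mismatch; in your own words, ``without this input one cannot confirm'' the incompatibility. As written, you have traded one citation of \cite{KLW} (the proposition itself) for another (the eigenvalue enumeration), so nothing has been proved. To complete the argument along these lines you must actually import the specific trivial eigenvalues of $L_E$ and $L_B$ from \cite{KLW}, compute the order of vanishing of each side of your displayed identity at a chosen root, and exhibit the discrepancy explicitly.
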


\subsection{Rational form}\label{rationalform}
Let $\g$ be a non-identity element in $\G$ and $L= F\langle \gamma \rangle$ be the field over $F$ generated by the eigenvalues of $\g$. If $L = F$, then there is a scalar $z \in Z$ such that  $\g z$ is conjugate to
$r_\g := \diag(1, a, b)$ where $1, a, b \in F^\times$ satisfy
$\ord_{\pi}$$ b \ge \ord_{\pi}$$ a \ge 0$. 
If $L$ is a quadratic extension of $F$,
fix a generator $\lambda$ so that it is a unit if $L$ is
unramified over $F$ and it is a uniformizing element if $L$ is
ramified over $F$. Let $x^2 - bx - c$ be the irreducible polynomial
of $\lambda$ over $F$ and let $\bar {\lambda}$ be the Galois
conjugate of $\lambda$. Then $\ord_{\pi}$$ c = 0$ or $1$ according as
$L$ is unramified or ramified over $F$ and $\ord_{\pi}$$ b \ge \frac
{1}{2}\ord_{\pi}$$ c$. There are elements $a, e, d \in \oo$ with at least one of them a unit such that $a$, $e +
d \lambda$ and $e + d \bar {\lambda}$ are the eigenvalues of $\g z$ for some scalar $z \in Z$. Consequently, up to a scalar multiple, $\g$ is conjugate to
$r_\g :=\left( \begin{matrix} a &  &  \\ & e & dc \\ & d& e+db \end{matrix}\right)$.
Call $r_\g$ a {\it rational form of} $\g$. It is unique modulo scalars in $\oo^\times$, and it depends only on the conjugacy class of $\g$.

\subsection{Homotopy classes of closed paths in $X_\G$}\label{parametrization}

A cycle in $X_\G$ is a closed path starting at a vertex of $X_\G$ and contained in the 1-skeleton of $X_\G$. Repetition of vertices is allowed. A 1-geodesic  between two vertices of $\mathcal B$ is a path in the 1-skeleton which uses the minimal number of edges.
A cycle in $X_\G$ is called $1$-geodesic (resp. geodesic) if it can be lifted to a path in $\B$ which is $1$-geodesic (resp. geodesic).

A $1$-geodesic cycle in $X_\G$ starting at the vertex $\G g KZ$ can be
lifted to a $1$-{geodesic} in $\B$ starting at $gKZ$ and ending at $\g gKZ$ for
some $\g \in \G$. Two such $1$-{geodesic} cycles in $X_\G$ are homotopic
in $X_\G$ if and only if their liftings in $\B$ to two {1-geodesics} starting
at $gKZ$ have the same ending vertex. Denote by $\kappa_\g(gKZ)$ the homotopy class of
the {1-geodesics} from $gKZ$ to $\g gKZ$ in $\B$. When
projected to $X_\G$, these {1-geodesics} become homotopic closed {1-geodesics} which
use least number of edges among all cycles in its homotopy class in $X_\G$. By abuse of
notation, $\kappa_\g(gKZ)$ also denotes the homotopy class of its projection
in $X_\G$. Thus the fundamental group of $X_\G$ based at $\G g KZ$ is
$$\pi_1(X_\G, \G g KZ) = \{\kappa_\g(gKZ) : \g \in \G \}.$$

\noindent Since $\G$ has no fixed points, all $\kappa_\g(gKZ)$ are
distinct and $\pi_1(X_\G, \G g KZ)$ is isomorphic to $\G$.

We shall take all base points into account, but regroup the homotopy classes $\kappa_\g(gKZ)$ with respect to the conjugacy classes of $\G$.
%
%
 For each conjugacy class of $\G$ fix a
representative $\g$ and denote that class by $\langle \g \rangle_{\G}$. Let $[\G]
= \{ \g \}$ be the set of chosen representatives of conjugacy classes. Denote by $C_\G(\g)$ the centralizer of $\g$ in $\G$. Given $\g \in \G$, the map $h \mapsto h^{-1}\g h$ is a bijection from $C_\G(\g)\backslash \G$
to the conjugacy class $\langle \g \rangle_\G$. 
 So $\G
= \coprod_{\g \in [\G]} \langle \g \rangle_\G$ corresponds bijectively to $\coprod_{\g \in [\G]}
C_\G(\g)\backslash \G$.
Letting, for each $\g \in [\G]$,
\begin{eqnarray}\label{classofgamma}
[\g] = \{\kappa_\g (gKZ) ~|~g \in C_\Gamma(\g)\backslash G/KZ \},
\end{eqnarray}
we obtain the following partition of all vertex-based homotopy classes of closed 1-geodesics in $X_\G$:
\begin{eqnarray*}
\coprod_{\G gKZ \in \G \backslash G/KZ} \pi_1(X_\G, \Gamma gKZ)
&=&  \{\kappa_\g(gKZ) : \g \in \G ,  g \in \G \backslash G/KZ \} \\
&=&  \{\kappa_{h^{-1}\g h}(gKZ) : \g \in [\G], h \in C_\G(\g)\backslash \G  , g \in \G \backslash G/KZ \} .
\end{eqnarray*}
Note that  $\kappa_{h^{-1}\g h}(gKZ)$ consists of 1-geodesics from $gKZ$ to $h^{-1} \g h g KZ$; left multiplication by $h$ yields a bijection from $\kappa_{h^{-1}\g h}(gKZ)$ to $\kappa_{\g }(h gKZ)$. When $h \in \G$, both $\kappa_{\g }(h gKZ)$ and $\kappa_{h^{-1}\g h}(gKZ)$
project to the same homotopy class of 1-geodesic cycles in $X_\G$. Hence we rewrite
\begin{eqnarray*}
\coprod_{\G gKZ \in \G \backslash G/KZ} \pi_1(X_\G, \Gamma gKZ)
&=&  \{\kappa_\g(gKZ) :  \g  \in [\G] , g \in C_\G(\g)\backslash G /KZ \}\\
&=& \coprod_{\gamma \in [\G] } [\gamma]
\end{eqnarray*}
since $\G$ intersects conjugates of $KZ$ trivially.

\section{Type and lengths}

\subsection{Algebraic length and canonical algebraic length}
Given $g$ in $G$, there is a scalar $z \in F^\times$ such that $g' = z g$ is a matrix in $M_3(\oo) \smallsetminus \pi M_3(\oo)$; call $g'$ a minimally integral matrix associated to $g$. It is unique up to multiplication by $\oo^\times$. Define  the {\it algebraic length} of $g$ to be
$$l_A(g) =  \ord_\pi(\det( g')).$$
Thus we always have $l_A(g_1 g_2) \leq l_A(g_1)+l_A(g_2)$ for $g_1, g_2 \in G$.
Extend the definition of algebraic length to elements $g \in \GL_3(L)$ for any finite extension $L$ over $F$ by
$$ l_A(g) =  \frac{1}{[L:F]} \ord_\pi( N_{L/F}\circ\det(g')), $$
where $g'$ is a minimally integral matrix in $M_3(\mathcal O_L)$ associated to $g$. Note that $l_A(g)$ is independent of the choice of the field $L$ containing entries of $g$, and multiplication by scalars. Analogous to canonical heights, define the {\it canonical algebraic length} of $g$ to be
$$ L_A(g) = \lim_{n \to \infty} \frac{1}{n} l_A(g^n)$$
provided that the limit exists. We exhibit some properties of the canonical algebraic length.

\begin{proposition}\label{canonicallength}  Let $g$ be a semisimple element in $G$ and let  $d_g$ be a minimally integral diagonal matrix in $\GL_3(L)$ conjugate to $g$ up to a scalar multiple in a finite extension $L$ of $F$. Then

1. $L_A(g)$ exists and is equal to $l_A(d_g) = L_A(d_g)$, hence it is invariant under conjugation;

2. $L_A(g^n)=n L_A(g)$ for all integers $n \ge 1$;

3. $L_A(g) \leq l_A(g).$
\end{proposition}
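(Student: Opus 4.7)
The plan is to reduce all three claims to the diagonal case via a subadditivity inequality for $l_A$. The first step is to establish
\[
l_A(g_1 g_2) \;\leq\; l_A(g_1) + l_A(g_2) \qquad \text{for all } g_1, g_2 \in \GL_3(L).
\]
Picking minimally integral representatives $g_1', g_2'$, the product $g_1'g_2'$ still lies in $M_3(\mathcal O_L)$, so its own minimally integral form is obtained by dividing off some $\pi_L^k$ with $k \geq 0$; passing through $\det$ and $\ord_\pi \circ N_{L/F}$ produces a correction of $-3k/e \leq 0$, yielding subadditivity. As an immediate corollary $l_A(g^n) \leq n\,l_A(g)$, so once the limit $L_A(g)$ is known to exist it must satisfy $L_A(g) \leq l_A(g)$, giving item~3.

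In the diagonal case, the minimal integrality of $d_g = \diag(a_1,a_2,a_3)$ forces some $a_i$ to be a unit in $\mathcal O_L$, so $a_i^n$ is a unit for every $n \geq 1$ and $d_g^n$ is itself already minimally integral. The definition then produces
\[
l_A(d_g^n) \;=\; \frac{1}{[L:F]}\ord_\pi\bigl(N_{L/F}((a_1a_2a_3)^n)\bigr) \;=\; n\,l_A(d_g),
\]
so $L_A(d_g)$ exists and equals $l_A(d_g)$, giving the second equality in item~1.

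To reach the general semisimple case, write $g = c P d_g P^{-1}$ with $c \in L^\times$ and $P \in \GL_3(L)$. Using scalar invariance of $l_A$ and applying subadditivity twice, once to $g^n = c^n P d_g^n P^{-1}$ and once to $d_g^n = c^{-n} P^{-1} g^n P$, I obtain the sandwich
\[
l_A(d_g^n) - C \;\leq\; l_A(g^n) \;\leq\; l_A(d_g^n) + C, \qquad C := l_A(P) + l_A(P^{-1}),
\]
with $C$ independent of $n$. Dividing by $n$ and letting $n \to \infty$ gives $L_A(g) = L_A(d_g) = l_A(d_g)$, completing item~1; conjugation invariance follows because conjugate elements share the same diagonalization $d_g$. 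Item~2 is then immediate: the eigenvalues of $g^n$ are the $n$-th powers of those of $g$, so the already-minimally-integral $d_g^n$ serves as $d_{g^n}$, and $L_A(g^n) = l_A(d_g^n) = n\,l_A(d_g) = n\,L_A(g)$.

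The main technical care lies in the subadditivity step when $L/F$ is ramified with $e > 1$: one must correctly track the factor $f = [L:F]/e$ introduced by $\ord_\pi \circ N_{L/F}$ against the $1/[L:F]$ appearing in the definition of $l_A$, so that the correction from dividing $g_1'g_2'$ by $\pi_L^k$ comes out to the nonpositive quantity $-3k/e$ with no sign errors. Once this bookkeeping is in hand, the rest of the argument is essentially formal.
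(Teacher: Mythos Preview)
Your proposal is correct and follows essentially the same approach as the paper: the paper also argues that $d_g^n$ stays minimally integral so $l_A(d_g^n)=n\,l_A(d_g)$, then uses the subadditivity $l_A(g_1g_2)\le l_A(g_1)+l_A(g_2)$ (stated in the text just before the proposition) to sandwich $l_A(g^n)$ between $l_A(d_g^n)\pm C$ and pass to the limit. Your write-up supplies a bit more detail on the subadditivity bookkeeping in the ramified case, which the paper leaves implicit, but the logical structure is the same.
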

\begin{proof} By assumption, there is a scalar $z \in L^\times$ and $h \in \GL_3(L)$ such that $zg = h d_g h^{-1}$. Since $d_g$ is a minimally integral diagonal matrix, so is $d_g^n$ and $l_A(d_g^n) = n l_A(d_g)$ for all integers $n > 0$. Thus $L_A(d_g)$ exists and is equal to $l_A(d_g)$. The relation $z^ng^n = h d_g^n h^{-1}$ implies
$$  l_A(d_g^n)-l_A(h)-l_A(h^{-1}) \leq l_A(g^n) \leq  l_A(d_g^n)+l_A(h)+l_A(h^{-1}) $$
for all $n>0$. Thus $L_A(g)$ also exists and equals to $L_A(d_g)$.
The remaining assertions are clear.
\end{proof}

Note that for $g \in T_{n,m}$, its algebraic length is equal to $l_A(g)=n+2m$.
In this case, we say it has {\it type} $(n,m)$ and  {\it geometric length} $l_G(g) = n+m$.
\subsection{Geodesics and lengths in $\mathcal B$}
The building $\B$ is the union of its apartments, and each apartment is an Euclidean plane. It can be shown that all {1-geodesics} between two vertices $g_1KZ$ and
$g_2KZ$ with $g_1^{-1}g_2 \in T_{n,m}$ lie in the same apartment, and
they use $n$ type $1$ edges and $m$ type $2$ edges. We say that they
have type $(n, m)$ (the same type as $g_1\m g_2$), geometric length $n+m = l_G(g_1\m g_2)$ and algebraic length $n+2m = l_A(g_1\m g_2)$. When $m = 0$ (resp. $n = 0$), the path is said to have
{\it type $1$} (resp. {\it type $2$}) for short.  Note that the same path
traveled backwards is of type $(m, n)$ and has algebraic length $m + 2n$. Further, when the
path has type $1$ or $2$, there is only one {1-geodesic} between the two
vertices, and it is a geodesic in the building $\B$, called a geodesic of type $1$ or $2$ accordingly.

\subsection{The type and lengths of a homotopy class}

 The type, geometric length
and algebraic length of a homotopy class $\kappa_\g(gKZ)$ of $X_\G$
are those of $\kappa_\g(gKZ)$ in $\B$. In other words, If $g^{-1}\g g
\in T_{n,m}$, then $\kappa_\g(gKZ)$ has algebraic length
$l_A(\kappa_\gamma(gKZ))=n + 2m$, geometric length
$l_G(\kappa_\g(gKZ))= n + m$, and type $(n,m)$. Moreover,
 $\kappa_\g(gKZ)$ is of type 1 if $m=0$ and type 2 if $n=0$. By assumption,
$\kappa_\g(gKZ)$ has positive length if and only if $\g$ is not
identity.

\subsection{The type and lengths of
$[\g]$}\label{typeandlengthofclass}
Let $\g \in [\G]$ be non-identity, and let $r_\g$ be a rational form of $\g$ defined in \S \ref{rationalform}.
Fix a choice of $P_\g \in G$ such that $r_\g = (P_\g)^{-1} \g P_\g z_\g$ for some $z_\g \in Z$.  As the centralizers of $\g$
 and $r_\g$ in $G$ are related by
  $C_G(\g) = P_\g C_G(r_\g)P_\g^{-1}$, we have $C_\G(\g)P_\g = P_\g C_{P_\g^{-1}\G
P_\g}(r_\g)$, and $[\g]$ may be expressed in two ways:
\begin{eqnarray}\label{classofgamma2}
[\g] &=& \{ \kappa_\g(gKZ) ~|~  g \in C_\G(\g)\backslash G/KZ \} \notag \\
&=& \{ \kappa_\g(P_\g gKZ) ~|~  g \in C_{P_\g^{-1} \G P_\g}
(r_\g)\backslash G/KZ \} .
\end{eqnarray}
The second expression will facilitate our computations later.

 Suppose $r_\g \in T_{n,m}$. We say that
$[\g]$ has type $(n, m)$, algebraic length $l_A([\g]) = n + 2m$ and
geometric length $l_G([\g]) = n + m$. For brevity, call $[\g]$ of type $1$ or $2$ according as $m=0$ or $n=0$. We shall prove
\begin{theorem}\label{charlAandlG} Let $\g \in [\G]$ and $\g \ne id$. Then
\begin{eqnarray*}
 l_A([\g]) = min_{\kappa_\g(g KZ) \in [\g]} ~l_A(\kappa_\g(g KZ)) \quad \text{and} \quad
l_G([\g]) = min_{\kappa_\g(g KZ) \in [\g]} ~l_G(\kappa_\g(g KZ)).
\end{eqnarray*}
Moreover, for $g \in C_G(r_\g)$, we have $l_A(\kappa_\g(P_\g gKZ))=
l_A([\g])$, $l_G(\kappa_\g(P_\g gKZ))= l_G([\g])$ and the type of
$\kappa_\g(P_\g gKZ)$ coincides with the type of $[\g]$.
\end{theorem}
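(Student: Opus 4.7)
The strategy is to prove each equality by establishing matching inequalities. The ``$\le$'' halves come from the ``Moreover'' clause via a direct centralizer computation, while the ``$\ge$'' halves are obtained from a lattice-theoretic argument bounding the elementary divisors of any conjugate of $\g$ in terms of the $\pi$-valuations of its eigenvalues.

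For the ``Moreover'' clause, I first rewrite the definition of the rational form as $P_\g^{-1}\g P_\g = z_\g^{-1}r_\g$. For any $g \in C_G(r_\g)$,
\[
(P_\g g)^{-1}\g(P_\g g) \;=\; g^{-1}(z_\g^{-1}r_\g)g \;=\; z_\g^{-1}r_\g,
\]
the second equality because $g$ centralizes $r_\g$. Since $z_\g \in Z$, the element $z_\g^{-1}r_\g$ lies in the same $KZ$-double coset $T_{n,m}$ as $r_\g$, so $\kappa_\g(P_\g gKZ)$ has type $(n,m)$, algebraic length $n+2m = l_A([\g])$, and geometric length $n+m = l_G([\g])$. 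In particular, taking $g = e$ exhibits a class in $[\g]$ realizing these lengths, giving the ``$\le$'' direction of both minima.

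For the lower bound, fix $g \in G$ and let $\pi^{k_1}\mid\pi^{k_1+m}\mid\pi^{k_1+m+n}$ be the elementary divisors of $h := g^{-1}\g g$, so that $h \in T_{n,m}$ after $Z$-scaling. Then $l_A(\kappa_\g(gKZ)) = n+2m = \ord_\pi\det\g - 3k_1$ and $l_G(\kappa_\g(gKZ)) = n+m = (k_1+m+n) - k_1$. Since $\pi^{k_1}$ divides every entry of $h$, the matrix $\pi^{-k_1}\g$ preserves the lattice $g\oo^3$; hence every eigenvalue $\mu$ of $\g$ in $\bar F$ satisfies $\ord_\pi\mu \ge k_1$, forcing $k_1 \le \lfloor\min_i\ord_\pi\mu_i\rfloor$ since $k_1 \in \Z$. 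Applying the same argument to $h^{-1} = g^{-1}\g^{-1}g$, whose smallest elementary divisor is $\pi^{-(k_1+m+n)}$ and whose eigenvalues are $\mu_i^{-1}$, gives $k_1+m+n \ge \lceil\max_i\ord_\pi\mu_i\rceil$. Combining,
\[
l_A(\kappa_\g(gKZ)) \ge \ord_\pi\det\g - 3\lfloor\min_i\ord_\pi\mu_i\rfloor, \quad l_G(\kappa_\g(gKZ)) \ge \lceil\max_i\ord_\pi\mu_i\rceil - \lfloor\min_i\ord_\pi\mu_i\rfloor.
\]
It remains to verify that these right-hand sides equal $l_A(r_\g)$ and $l_G(r_\g)$, which is equivalent to showing $\ord_\pi z_\g = -\lfloor\min_i\ord_\pi\mu_i(\g)\rfloor$.

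The main obstacle is this final verification in the rank-one split case, particularly when $L = F\langle\g\rangle$ is ramified: two of the eigenvalues of $r_\g$ then carry half-integer $\pi$-valuations, so the floor and ceiling operations are genuinely nontrivial. One must show that the normalization ``$a, e, d \in \oo$ with at least one a unit'' fixed in $\S\ref{rationalform}$ places $\min_i\ord_\pi\mu_i(r_\g)$ precisely into $[0,1)$. This reduces to a short case analysis according to which of $a, e, d$ realizes minimal integrality of $r_\g$; the split and irregular cases follow immediately from $r_\g = \diag(1,a,b)$ with $0 \le \ord_\pi a \le \ord_\pi b$.
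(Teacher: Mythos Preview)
Your approach is correct and genuinely different from the paper's. The paper defers the first assertion to two later case-by-case theorems (Theorem~\ref{minlength} for split/irregular $\g$, Theorem~\ref{rankoneminlength} for rank-one split $\g$); in each case it fixes an explicit transversal for $C_G(r_\g)\backslash G/KZ$ (Iwasawa decomposition, resp.\ the set $S$ of Proposition~\ref{rankonerep}), writes out $g^{-1}r_\g g$ entrywise, and bounds the partial sums $e_1$, $e_1+e_2$, $e_1+e_2+e_3$ of the elementary-divisor exponents by inspection. Your lattice-theoretic argument---bounding $e_1$ and $e_3$ via integrality of the characteristic polynomial of $\pi^{-k_1}h$ and $\pi^{k_1+m+n}h^{-1}$---is uniform across the classification and avoids all of that explicit matrix work. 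What the paper's computation buys, and yours does not, is the exact description of $\Delta_A([\g])$ (which $g$ attain the minimum), and that description is essential for the counting results in \S\ref{number-split} and \S\ref{number-rankone}; so the paper's longer route is not wasted.

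One small point: your sentence ``which is equivalent to showing $\ord_\pi z_\g = -\lfloor\min_i\ord_\pi\mu_i(\g)\rfloor$'' is accurate for the $l_A$ bound but not for $l_G$. The condition $\ord_\pi z_\g = -\lfloor\min\rfloor$ says precisely that $e_1(r_\g)=0=\lfloor\min_i\ord_\pi\mu_i(r_\g)\rfloor$; for the geometric length you also need $e_3(r_\g)=\lceil\max_i\ord_\pi\mu_i(r_\g)\rceil$, which is an independent statement. It does hold---the block shape $r_\g=\left(\begin{smallmatrix}a&0\\0&M\end{smallmatrix}\right)$ lets you read off $e_3(r_\g)=\max(\ord_\pi a, f_2(M))$ with $f_2(M)$ the larger elementary divisor of $M$, and a short check in the ramified case (where the eigenvalue valuations of $M$ are half-integers) confirms $f_2(M)=\lceil\ord_\pi(e+d\lambda)\rceil$---but you should say so rather than fold it into the floor condition. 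Alternatively, once you have the $l_A$ lower bound for all $\g$, the $l_G$ lower bound follows from applying it to both $\g$ and $\g^{-1}$ and using $3l_G(\kappa)=l_A(\kappa_\g)+l_A(\kappa_{\g^{-1}})$.
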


 The second assertion is obvious because $(P_\g g)^{-1} \g P_\g g z_\g = g^{-1} r_\g g = r_\g$ for
 $g \in C_G(r_\g)$.
 The proof of the first assertion is contained in Theorem \ref{minlength} for $\g$ split or irregular,
 and Theorem \ref{rankoneminlength} for $\g$
 rank-one split.

Note that $l_A(\kappa_\gamma(gKZ)) \equiv \ord_{\pi} \det \g  \pmod
3$, hence $l_A(\kappa_\g(gKZ)) = l_A([\g]) + 3m$ for some
non-negative integer $m$.

\subsection{Algebraically minimal and geometrically minimal cycles}\label{taillesscycles}
In view of Theorem \ref{charlAandlG}, a homotopy class $\kappa_\g(gKZ)$ is called {\it algebraically minimal} if its algebraic length agrees with
$l_A([\g])$. Likewise, it is called {\it geometrically minimal} if its geometric length is
$l_G([\g])$.

\begin{proposition} \label{geometricalminimalequaltothesametype}
$\kappa_\g(gKZ)$ is geometrically minimal if and only if $\kappa_\g(gKZ)$ and $[\g]$ have the same type. Moreover, if $\kappa_\g(gKZ)$ is geometrically minimal, then it is also algebraically minimal.
\end{proposition}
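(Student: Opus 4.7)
The plan is as follows. The ``if'' direction is immediate: if $\kappa_\g(gKZ)$ has the same type $(n_0,m_0)$ as $[\g]$, then its geometric length is $n_0+m_0 = l_G([\g])$ and its algebraic length is $n_0 + 2m_0 = l_A([\g])$, so it is both geometrically and algebraically minimal. In particular, the ``moreover'' clause will follow once the ``only if'' direction is established, since geometric minimality will force the type to match $[\g]$, and then the algebraic length is automatically $l_A([\g])$.

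For the ``only if'' direction, suppose $[\g]$ has type $(n_0,m_0)$ and $\kappa_\g(gKZ)$ has type $(n,m)$ with $n+m = n_0+m_0$. The first bound is obtained by applying the first assertion of Theorem~\ref{charlAandlG} directly to $[\g]$:
$$n+2m \;=\; l_A(\kappa_\g(gKZ)) \;\ge\; l_A([\g]) \;=\; n_0+2m_0.$$
The key additional ingredient is to reverse the cycle. Its reverse in $\B$ is a $1$-geodesic from $\g gKZ$ to $gKZ$, hence a representative of $\kappa_{\g^{-1}}(\g gKZ)$; this class projects in $X_\G$ onto an element of $[\g_0]$, where $\g_0 \in [\G]$ represents the conjugacy class $\langle \g^{-1}\rangle_\G$. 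Since the opposite of a type $i$ directed edge has type $3-i$ (cf.~\S\ref{complexonG}), the reversed cycle has type $(m,n)$. The elementary divisor theorem gives $KZ\,\g^{-1} KZ = T_{m_0,n_0}$, so $[\g_0]$ has type $(m_0,n_0)$; applying Theorem~\ref{charlAandlG} to $[\g_0]$ therefore yields
$$m+2n \;\ge\; m_0+2n_0.$$
Setting $a = n-n_0$ and $b = m-m_0$, the three relations $a+b=0$, $a+2b\ge 0$, and $2a+b\ge 0$ combine (substitute $b=-a$) to force $a\le 0$ and $a\ge 0$, hence $a=b=0$. Therefore $(n,m) = (n_0,m_0)$, and $\kappa_\g(gKZ)$ is algebraically minimal.

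The only delicate point is the reversal step: I must confirm that the reversed cycle genuinely represents a class of the form $\kappa_{\g_0}(\cdot)$ with $\g_0$ the chosen representative of $\langle \g^{-1}\rangle_\G$, and that its type and algebraic length are computed correctly from $[\g_0]$. Both facts reduce to the observation that the type of a homotopy class depends only on the $KZ$-double coset of the defining element (together with the identity $KZ\,g^{-1}KZ = T_{m,n}$ whenever $KZ\, g\, KZ = T_{n,m}$), and that the partition in \S\ref{parametrization} is compatible with conjugation and hence with the reversal operation at the level of $X_\G$.
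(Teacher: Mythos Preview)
Your proof is correct and follows essentially the same approach as the paper: use Theorem~\ref{charlAandlG} for both $\gamma$ and $\gamma^{-1}$ to obtain the two inequalities $n+2m \ge n_0+2m_0$ and $2n+m \ge 2n_0+m_0$, then combine with $n+m = n_0+m_0$ to force $(n,m)=(n_0,m_0)$. One small slip: the sentence ``the elementary divisor theorem gives $KZ\,\gamma^{-1}KZ = T_{m_0,n_0}$'' is not literally true, since $\gamma$ itself need not lie in $T_{n_0,m_0}$; what you want is that $r_\gamma \in T_{n_0,m_0}$ implies $r_\gamma^{-1} \in T_{m_0,n_0}$, and hence (checking the rational-form construction in \S\ref{rationalform}) that $r_{\gamma_0} \in T_{m_0,n_0}$, so $[\gamma_0]$ has type $(m_0,n_0)$ --- the paper's proof tacitly uses this same fact.
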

\begin{proof}
Suppose $[\g]$ is of type $(n,m)$ and $\kappa_\g(gKZ)$ is of type $(i,j)$. Applying Theorem \ref{charlAandlG} to both $[\g]$ and $[\g^{-1}]$, we have
$$ n+2m \leq i+2j \qquad \mbox{and} \qquad 2n+m \leq 2i+j.$$
If $\kappa_\g(gKZ)$ is geometrically minimal, then $n+m=i+j$. Together with the above inequalities, we conclude that $(i,j)=(n,m)$.
On the other hand, if $\kappa_\g(gKZ)$ and $[\g]$ have the same type, then they obviously have the same algebraic and geometric lengths. Therefore, $\kappa_\g(gKZ)$ is both geometrically and algebraically minimal.
\end{proof}
\begin{corollary} \label{canonicallengthandtype}
If $L_A(\g) = l_A(\kappa_\gamma(gKZ))$ and $L_A(\g^{-1}) = l_A(\kappa_{\gamma^{-1}}(gKZ))$,  then $\kappa_\gamma(gKZ)$ and $[\g]$ have the same type. Consequently $\kappa_\gamma(gKZ)$ is geometrically and algebraically minimal.
\end{corollary}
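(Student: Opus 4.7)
The plan is to show that $\kappa_\g(gKZ)$ and $[\g]$ share the same type $(n,m)$; once this is done, Proposition \ref{geometricalminimalequaltothesametype} automatically yields that $\kappa_\g(gKZ)$ is both geometrically and algebraically minimal. The strategy is to squeeze $L_A(\g)$ between $l_A([\g])$ (from below, using the rational form) and $l_A(\kappa_\g(gKZ))$ (from above, using Theorem \ref{charlAandlG}), and then turn the forced equalities for $\g$ and $\g^{-1}$ into a linear system on types.

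First I would set up the squeeze $L_A(\g) \le l_A([\g]) \le l_A(\kappa_\g(gKZ))$. The right inequality is Theorem \ref{charlAandlG}. For the left inequality, Proposition \ref{canonicallength}(1) lets me replace $\g$ by its rational form: since $r_\g$ is conjugate to $\g$ up to a scalar and $\g$ is semisimple, $L_A(\g) = L_A(r_\g)$, and by Proposition \ref{canonicallength}(3) we have $L_A(r_\g) \le l_A(r_\g)$. But $r_\g \in T_{n,m}$ if $[\g]$ has type $(n,m)$, so $l_A(r_\g) = n + 2m = l_A([\g])$. The hypothesis $L_A(\g) = l_A(\kappa_\g(gKZ))$ now forces equality throughout; in particular, $l_A(\kappa_\g(gKZ)) = l_A([\g])$. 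Applying the same argument to $\g^{-1}$ gives $l_A(\kappa_{\g^{-1}}(gKZ)) = l_A([\g^{-1}])$.

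Next I translate these two equalities into the type data. Writing $[\g]$ of type $(n,m)$ and $\kappa_\g(gKZ)$ of type $(i,j)$, I need to identify the types of the inverse objects. For the class, $r_\g^{-1}$ rescales to the rational form of $\g^{-1}$: in the split case $\diag(1,a,b)^{-1}$ becomes $\diag(1, \pi^n, \pi^{n+m})$ after multiplication by $b$, and the irregular and rank-one split cases are handled identically, noting that inversion swaps the two invariants $(n,m)$. Hence $[\g^{-1}]$ has type $(m,n)$. For the path, $g^{-1} \g g \in T_{i,j}$ implies $(g^{-1}\g g)^{-1} \in T_{j,i}$ (since the diagonal $\diag(1, \pi^j, \pi^{j+i})$ is a representative of the inverse double coset up to a scalar), so $\kappa_{\g^{-1}}(gKZ)$ has type $(j,i)$. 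The two equalities now read
\begin{equation*}
i + 2j = n + 2m, \qquad 2i + j = 2n + m,
\end{equation*}
whose unique solution is $(i,j) = (n,m)$. Thus $\kappa_\g(gKZ)$ and $[\g]$ have the same type, and Proposition \ref{geometricalminimalequaltothesametype} completes the proof.

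The main obstacle is the first step, namely extracting the identity $L_A(\g) = l_A([\g])$ from Proposition \ref{canonicallength}. The subtlety is that $l_A$ is not conjugation-invariant, so one must go through $r_\g$ as an intermediate and use conjugation- plus scalar-invariance of $L_A$ to bridge the abstract canonical length to the concretely combinatorial length of the class. Everything else is bookkeeping on types.
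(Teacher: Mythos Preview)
Your proof is correct and follows essentially the same approach as the paper: both establish the chain $L_A(\g)=L_A(r_\g)\le l_A(r_\g)=l_A([\g])\le l_A(\kappa_\g(gKZ))$ via Proposition~\ref{canonicallength} and Theorem~\ref{charlAandlG}, use the hypotheses to force equality for $\g$ and $\g^{-1}$, and then solve the resulting $2\times 2$ linear system on types. Your write-up is in fact slightly more explicit than the paper's in justifying that $[\g^{-1}]$ and $\kappa_{\g^{-1}}(gKZ)$ have the swapped types $(m,n)$ and $(j,i)$.
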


\begin{proof} Recall that $l_A(\kappa_\g (gKZ)) = l_A( g^{-1} \g g)$ by definition. It follows from
Theorem \ref{charlAandlG} and Proposition \ref{canonicallength} that, for $\g \in \G$,
\begin{equation} \label{lengthcomparision}
L_A(\g)=L_A(r_\g) \leq  l_A(r_\g) =l_A([\g]) \leq l_A(\kappa_\gamma(gKZ)) = L_A(\g).
\end{equation}
Therefore  $l_A([\g])=l_A(\kappa_\gamma(gKZ))$. By the same argument, $l_A([\g^{-1}])=l_A(\kappa_{\g^{-1}}(gKZ)) $.
Suppose $\kappa_\gamma(gKZ)$ is of type $(m,n)$ and $[\g]$ is of type $(m',n')$, then we have $2m+n=2m'+n'$ and $m+2n=m'+2n'$, which implies $(m,n)=(m',n')$.
\end{proof}

\subsection{Tailless cycles}\label{taillesscycles}

Recall that a 1-geodesic cycle is tailless if it remains 1-geodesic when the starting vertex is changed.
We give useful criteria for 1-geodesic  type 1 cycles to be tailless.

\begin{proposition} \label{taillesscriterion}
Let  $\kappa_\g(gKZ)$ be a 1-geodesic  cycle  of  type $1$ and  geometric length $n > 1$
in $X_\G$. The following statements are equivalent:

1. $\kappa_\g(gKZ)$ is tailless;

2. $\kappa_{\g}(gKZ)$ repeated $m$-times is type 1 geodesic for all $m>0$;

3. $\kappa_\g(gKZ)$ is geometrically minimal.
\end{proposition}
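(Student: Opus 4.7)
The plan is to prove $(1) \Leftrightarrow (2)$ by a geometric argument about straight type $1$ segments in apartments of $\B$, and $(2) \Leftrightarrow (3)$ by combining the sub-additivity of algebraic length with Corollary \ref{canonicallengthandtype}.

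For $(1) \Leftrightarrow (2)$ I would use the characterization recorded in \S\ref{complexonG} and \S\ref{typeandlengthofclass} that a 1-geodesic of type $1$ in $\B$ is a straight segment inside a single apartment. A lift of the $m$-fold repetition of $\kappa_\g(gKZ)$ is the concatenation of $m$ such segments meeting at the vertices $\g^i gKZ$ ($0<i<m$), and is itself a type $1$ 1-geodesic exactly when no turn occurs at any joining vertex $\g^i gKZ$. This same no-turn condition is precisely what forces the cyclic rotation of $\kappa_\g(gKZ)$ based at $\g^i gKZ$ to remain a 1-geodesic, so (1) and (2) describe the same property.

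For $(3) \Rightarrow (2)$, Proposition \ref{geometricalminimalequaltothesametype} gives $\kappa_\g(gKZ)$ the type $(n,0)$ of $[\g]$, so $l_A([\g]) = n$, and the chain of inequalities (\ref{lengthcomparision}) collapses to equalities, yielding $L_A(\g) = n$. The $m$-fold repetition furnishes a 1-path of $mn$ type $1$ edges from $gKZ$ to $\g^m gKZ$, so by the sub-additivity of $l_A$ noted in \S\ref{rationalform},
$$ l_A\bigl(\kappa_{\g^m}(gKZ)\bigr) \,=\, l_A\bigl((g^{-1}\g g)^m\bigr) \,\leq\, m\, l_A(g^{-1}\g g) \,=\, mn, $$
while Proposition \ref{canonicallength} gives the matching lower bound $l_A(\kappa_{\g^m}(gKZ)) \geq L_A(\g^m) = mn$. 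Running the same argument on $\g^{-1}$ yields $l_A(\kappa_{\g^{-m}}(gKZ)) = 2mn = L_A(\g^{-m})$, and Corollary \ref{canonicallengthandtype} applied to $\g^m$ shows $\kappa_{\g^m}(gKZ)$ and $[\g^m]$ share a common type $(i,j)$ with $i + 2j = mn$. A brief inspection of the rational form---raising the eigenvalues of $r_\g$ to the $m$-th power in each of the split, irregular and rank-one split subcases of \S\ref{classificationofGamma}---shows the elementary divisors of $r_{\g^m}$ are $(1,1,\pi^{mn})$, so $(i,j) = (mn,0)$ and the $m$-fold repetition is the type $1$ 1-geodesic realizing $\kappa_{\g^m}(gKZ)$.

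Conversely, for $(2) \Rightarrow (3)$ we read off $l_A(\kappa_{\g^m}(gKZ)) = mn$ for every $m>0$ from (2), so by the conjugation invariance in Proposition \ref{canonicallength}(1),
$$ L_A(\g) \,=\, L_A(g^{-1}\g g) \,=\, \lim_{m \to \infty} \frac{l_A((g^{-1}\g g)^m)}{m} \,=\, n \,=\, l_A(\kappa_\g(gKZ)), $$
and an identical calculation gives $L_A(\g^{-1}) = 2n = l_A(\kappa_{\g^{-1}}(gKZ))$; Corollary \ref{canonicallengthandtype} then yields (3). The main obstacle I anticipate is the rational-form check in $(3) \Rightarrow (2)$ confirming $[\g^m]$ has type $(mn, 0)$: the split and irregular cases are immediate, while the rank-one split case requires tracking how $\alpha^m = e' + d'\lambda$ sits in $\oo_L$ (including the exceptional $\alpha^m \in F$) to verify the elementary divisor structure of $r_{\g^m}$.
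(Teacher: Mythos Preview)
Your proposal is correct and the $(2)\Rightarrow(3)$ step matches the paper exactly, but the organization differs and your $(3)\Rightarrow(2)$ does more than needed. The paper runs the cycle $(1)\Rightarrow(2)\Rightarrow(3)\Rightarrow(1)$. For $(3)\Rightarrow(1)$ it only considers the $2$-fold repetition $C$: since any cyclic shift of $\kappa_\g(gKZ)$ has a lift sitting inside $C$, it suffices to show $C$ is a $1$-geodesic. This follows by reading off from the explicit shape of $r_\g$ (diagonal or block-diagonal with a $\GL_2(\oo)$-block) that $l_G([\g^2])=2n$, so the $2n$-edge path $C$ already realizes the minimum. Your direct $(3)\Rightarrow(2)$ for every $m$ via the $l_A$-sandwich and Corollary~\ref{canonicallengthandtype} works, but once you have the easy $(1)\Leftrightarrow(2)$ in hand, establishing the case $m=2$ is enough to close the loop, and the rational-form check for $\g^2$ is no harder than for general $\g^m$.

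Two small points to tighten. First, your appeal to \eqref{lengthcomparision} to conclude $L_A(\g)=n$ is not quite right: the final ``$=L_A(\g)$'' in that display is the \emph{hypothesis} of Corollary~\ref{canonicallengthandtype}, not a standing identity, so the chain as written only gives $L_A(\g)\le n$. The clean justification is Proposition~\ref{canonicallength}(1): $L_A(\g)=l_A(d_\g)$, and since $[\g]$ has type $(n,0)$ the eigenvalue valuations force $l_A(d_\g)=n$. Second, the sub-additivity $l_A(g_1g_2)\le l_A(g_1)+l_A(g_2)$ is recorded just after the definition of $l_A$ in \S5.1, not in \S\ref{rationalform}.
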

\begin{proof}
(1$\Rightarrow$ 2) Suppose $\kappa_\g(gKZ)$ is tailless. Let $g_0KZ \to \cdots \to g_{2n}KZ $ be a lifting in $\B$ of $\kappa_{\g}(gKZ)$ repeated $2$ times. Then $g_{n+i}KZ = \g g_i KZ$ for $0 \le i \le n$. The path $g_iKZ \to g_{i+1}KZ \to \cdots \to  g_{i+n}KZ$ is a geodesic for $i = 0, ... , n-1$ by assumption. Hence $g_0KZ \to \cdots \to g_{2n}KZ $ is a 1-geodesic in $\B$ and thus $\kappa_\g(gKZ)$ repeated twice is a type 1 geodesic cycle in $X_\G$, and so are $\kappa_\g(gKZ)$ repeated $m$ times for $m > 0$.
\\
(2 $\Rightarrow$ 3)
Suppose $\kappa_{\g}(gKZ)$ repeated $m$-times is a type 1 geodesic of length $nm$ for all $m>0$. Then $g^{-1}\g^m g \in T_{nm,0}$ for all $m \geq 0$ and, by Proposition \ref{canonicallength},
$$ L_A(\g) = L_A(g^{-1} \g g ) = \lim_{m \to \infty} \frac{1}{m}l_A(g^{-1}\g^m g)=  \lim_{m \to \infty} \frac{1}{m}m l_A(g^{-1} \g g) = n = l_A(\kappa_\g(gKZ)).$$
As $\kappa_{\g^{-1}}(gKZ)$ is $\kappa_{\g}(gKZ)$ traveled backwards, it is a 1-geodesic cycle of type $2$ and algebraic length $2n$. Further $\kappa_{\g^{-1}}(gKZ)$ repeated $m$ times is a type 2 geodesic for all $m>0$. A similar argument gives $L_A(\g^{-1})=2n=l_A(\kappa_{\g^{-1}}(gKZ))$.
We conclude from  
Corollary \ref{canonicallengthandtype}  that $\kappa_\g(gKZ)$ is geometrically minimal.
\\
(3 $\Rightarrow$ 1)
Suppose $\kappa_\gamma(gKZ)$ is geometrically minimal.
Let $C: g_0KZ \to \cdots \to g_{2n}KZ = \g^2 g_0 KZ $ be a lifting  in $\B$ of $\kappa_{\g}(gKZ)$ repeated twice.
If we change the starting vertex of $\kappa_{\g}(gKZ)$ to obtain a new cycle, then a lifting in $\B$ of this new cycle is contained in $C$. Thus it suffices to show that $C $ is a 1-geodesic.
By Proposition \ref{geometricalminimalequaltothesametype} and the assumption on $\kappa_\gamma(gKZ)$,
$[\g]$ has type $(n,0)$ and $r_\g$ is of the form
$$ \begin{pmatrix} 1 & & \\ & a & \\ & & \pi^n b \end{pmatrix} \qquad \mbox{or} \qquad
\begin{pmatrix} \pi^n  & \\ & M \end{pmatrix},$$
where $a,b\in \oo^\times$ and $M \in GL_2(\oo)$. In both cases we find $[\g^2]$ of type $(2n,0)$ and $l_G([\g^2])=2n$. As $C$ has geometric length $2n$ and it is homotopic to a 1-geodesic from $g_0KZ$ to $\g^2g_0KZ$, combined with Theorem \ref{charlAandlG}, we get  $2n \ge l_G(\kappa_{\g^2}(gKZ)) \ge l_A([\g^2])=2n$. This shows that $C$ is a 1-geodesic, as desired.
\end{proof}


\begin{corollary} \label{taillesscriterion3}
 If $[\g]$ contains a tailless geodesic cycle of type $i \in \{1, 2\}$, then $[\g]$ is of type $i$.  In this case
the tailless geodesic cycles in $[\g]$ are those which are geometrically minimal.

\end{corollary}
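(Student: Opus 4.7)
The plan is to deduce this corollary directly from Proposition \ref{taillesscriterion} and Proposition \ref{geometricalminimalequaltothesametype}, using a short reversal argument to bridge between types $1$ and $2$ since Proposition \ref{taillesscriterion} is stated only for type $1$ cycles.

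For the first assertion, I start with a tailless geodesic cycle $\kappa_\g(gKZ) \in [\g]$ of type $i$. If $i=1$, the implication $1 \Rightarrow 3$ of Proposition \ref{taillesscriterion} says the cycle is geometrically minimal, and then Proposition \ref{geometricalminimalequaltothesametype} forces the type of $[\g]$ to coincide with that of the cycle, so $[\g]$ has type $1$. If $i=2$, I reverse the cycle: traversing it backwards from the vertex $\g g KZ$ produces the type $1$ cycle $\kappa_{\g^{-1}}(\g g KZ)$ in $[\g^{-1}]$. This reversed cycle is still geodesic and still tailless, since the taillessness condition (``remains $1$-geodesic under every shift of base point'') is symmetric with respect to reversal. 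Applying the type $1$ case to $[\g^{-1}]$ shows $[\g^{-1}]$ is of type $(n,0)$, so $[\g]$ is of type $(0,n)$, i.e.\ of type $2$.

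For the second assertion I need to establish the equivalence \emph{tailless geodesic} $\Leftrightarrow$ \emph{geometrically minimal} inside $[\g]$ (now known to have type $i$). The forward direction is Proposition \ref{taillesscriterion} ($1\Rightarrow 3$) when $i=1$, and the same proposition applied to the reversed type $1$ cycle in $[\g^{-1}]$ when $i=2$. For the converse, suppose $\kappa_\g(gKZ)$ is geometrically minimal; Proposition \ref{geometricalminimalequaltothesametype} forces its type to match the type $i$ of $[\g]$, so it is automatically a geodesic of type $i$, and the implication $3\Rightarrow 1$ of Proposition \ref{taillesscriterion} (applied directly for $i=1$, and to the reversed cycle in $[\g^{-1}]$ for $i=2$) shows it is tailless.

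The only non-routine point in the whole argument is justifying the reversal used in the $i=2$ case, and this is the step I expect to have to spell out carefully. What one needs is that reversing a cycle of type $(n,m)$ yields a cycle of type $(m,n)$ of the same geometric length, inside the conjugacy class $[\g^{-1}]$ instead of $[\g]$, and that this reversal preserves both the tailless and the geometrically minimal properties. All three facts are immediate from the definitions in \S5, so once they are recorded the corollary follows with no further computation.
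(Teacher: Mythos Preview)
Your argument is correct and is exactly the derivation the paper has in mind (the corollary is stated there without proof, as an immediate consequence of Proposition~\ref{taillesscriterion} and Proposition~\ref{geometricalminimalequaltothesametype}); the reversal device to pass from type~$2$ to type~$1$ is the intended route. One small point worth making explicit in the second assertion: before invoking Proposition~\ref{taillesscriterion} you should note that any tailless geodesic cycle in $[\g]$ necessarily has type~$i$ (a geodesic cycle has type $1$ or $2$, and if it had type $j\ne i$ the first assertion would force $[\g]$ to have type~$j$), so the hypothesis of that proposition is met.
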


\subsection{The number of tailless cycles in $[\g]$ and the volume of $[\g]$}\label{volume}
Since $X_\G$ is finite, it contains only finitely many $1$-geodesic cycles with a given algebraic or geometric length. Hence for each $\g \in [\G]$, there are only finitely many cycles $\kappa_\g(gKZ)$ in $[\g]$ with given algebraic or geometric length. Let
\begin{eqnarray}\label{deltaA}
\Delta_A([\g])= \{ gKZ \in G/KZ ~|~
l_A(\kappa_\g(P_\g gKZ)) = l_A([\g])\}.
\end{eqnarray} As noted before,
$\Delta_A([\g]) \supset C_G(r_\g)K/KZ$ and is invariant under left
multiplication by $C_{P_\g^{-1}\G P_\g}(r_\g)$. Define the {\it volume} of $[\g]$ to be
\begin{eqnarray}\label{fundamentaldomain'}
\vol([\g]) = \# \big(C_{P_\g^{-1} \G P_\g}(r_\g) \backslash
C_G(r_\g)/(C_G(r_\g)\cap KZ)\big).
\end{eqnarray}
It follows from (\ref{classofgamma2}) and Theorem \ref{charlAandlG} that the number of algebraically
minimal cycles in $[\g]$ is the cardinality of
$C_{P_\g^{-1}\Gamma P_\g}(r_\g)\backslash \Delta_A([\g])$,
which is at least $\vol([\g])$. This in particular implies the finiteness of $\vol([\g])$.

Set
\begin{eqnarray}\label{deltaG}
 \Delta_G([\g]) = \{ gKZ\in G/KZ ~|~ l_G(\kappa_\g(P_\g gKZ)) = l_G([\g]) \}.
 \end{eqnarray}
\noindent By Theorem \ref{charlAandlG} and Proposition
\ref{geometricalminimalequaltothesametype}, geometrically minimal cycles in $[\g]$ have the same type as $[\g]$, and they are also algebraically
minimal. Thus $\Delta_G([\g]) \subseteq \Delta_A([\g])$. The cardinality of
$C_{P_\g^{-1}\Gamma P_\g}(r_\g)\backslash \Delta_G([\g])$ counts the number of geometrically
minimal cycles in $[\g]$. The first statement below for $\g$ of type $1$ follows from Corollary \ref{typeoftailless} for $\g$ split or irregular and Corollary \ref{rankonetailless} for $\g$ rank-one split, hence it also holds for $\g$ of type $2$. The second statement is from Corollary \ref{taillesscriterion3}.

\begin{proposition}\label{algminequalgeommin} Suppose $\g \in [\G]$ has type $1$ or $2$. Then $\Delta_A([\g]) = \Delta_G([\g])$, i.e., in $[\g]$ there is no distinction among algebraically minimal,   geometrically minimal, and tailless cycles.
\end{proposition}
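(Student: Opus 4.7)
The plan is to establish the inclusion $\Delta_A([\g]) \subseteq \Delta_G([\g])$; the reverse inclusion is noted immediately before the proposition, and once the set-theoretic equality $\Delta_A([\g]) = \Delta_G([\g])$ is in hand, the identification of these with the tailless cycles in $[\g]$ is an immediate consequence of Corollary \ref{taillesscriterion3}. By symmetry, it suffices to treat the case where $[\g]$ has type $1$, say type $(n, 0)$, so that $l_A([\g]) = n$ and $l_G([\g]) = n$; the type $2$ case is obtained by applying the type $1$ case to $\g^{-1}$.

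First I would fix $gKZ \in \Delta_A([\g])$ and let $(i, j)$ denote the type of the cycle $\kappa_\g(P_\g gKZ)$, so that $i + 2j = l_A([\g]) = n$. The decisive move is to consider the same cycle traversed in reverse: this reverse path has type $(j, i)$ and algebraic length $j + 2i$, and it represents the inverse $\g^{-1}$. Since the rational form of $\g^{-1}$ lies in $T_{0, n}$, the corresponding $[\cdot]$-class has algebraic length $2n$. Applying Theorem \ref{charlAandlG} to that class yields
\[
j + 2i \geq 2n = 2(i + 2j),
\]
which rearranges to $3j \leq 0$ and forces $j = 0$. Hence $\kappa_\g(P_\g gKZ)$ has type $(n, 0)$, matching the type of $[\g]$, and Proposition \ref{geometricalminimalequaltothesametype} then certifies that $gKZ \in \Delta_G([\g])$.

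A minor technicality is the invocation of Theorem \ref{charlAandlG} on the class of $\g^{-1}$, since $\g^{-1}$ need not be the chosen representative in $[\G]$; writing $\g^{-1} = h \g^* h^{-1}$ with $\g^* \in [\G]$ converts the reverse cycle into a cycle $\kappa_{\g^*}(h^{-1}\g P_\g gKZ)$ of the same algebraic length, and $[\g^*]$ inherits the rational form type $(0, n)$ from $\g^{-1}$, so the bound $j + 2i \geq 2n$ remains valid. The genuinely hard part is Theorem \ref{charlAandlG} itself, whose proof proceeds by a case split on the conjugacy type of $\g$ (split, irregular, or rank-one split), deferred to the later sections \S7 and \S8. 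Once that theorem is granted, the advantage of the present route is that $\Delta_A([\g]) = \Delta_G([\g])$ emerges uniformly from a single short computation, without the need to revisit the three cases separately (as is done in the paper through Corollary~\ref{typeoftailless} and Corollary~\ref{rankonetailless}).
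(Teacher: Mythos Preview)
Your proof is correct and genuinely cleaner than the paper's route. The paper establishes $\Delta_A([\g]) = \Delta_G([\g])$ by splitting into cases on the conjugacy type of $\g$: for $\g$ split or irregular it cites Corollary~\ref{typeoftailless}, which is extracted from the explicit $e_1, e_2, e_3$ computations in the proof of Theorem~\ref{minlength}; for $\g$ rank-one split it cites Corollary~\ref{rankonetailless}, extracted similarly from the proof of Theorem~\ref{rankoneminlength}. In each case one checks by hand that $e_1 = 0$ forces $e_1 + e_2 = 0$ when $[\g]$ has type~$1$.

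Your argument instead reuses the pair of inequalities $n + 2m \le i + 2j$ and $2n + m \le 2i + j$ already derived in the proof of Proposition~\ref{geometricalminimalequaltothesametype} (by applying Theorem~\ref{charlAandlG} to $\g$ and $\g^{-1}$). Specializing to $m = 0$ and imposing algebraic minimality $i + 2j = n$, the second inequality collapses to $3j \le 0$, whence $j = 0$. This is a uniform one-line deduction once Theorem~\ref{charlAandlG} is in place, and it renders the separate Corollaries~\ref{typeoftailless} and~\ref{rankonetailless} unnecessary for this proposition. The paper's case split has the modest advantage that the relevant observations fall out for free from computations already being done, whereas your route exposes more clearly \emph{why} the result holds: algebraic minimality in one direction together with the lower bound in the reverse direction pins down the type. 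Your handling of the technicality that $\g^{-1}$ need not itself lie in $[\G]$ is also correct; conjugating by $h \in \G$ preserves algebraic length, and the rational form (hence the type and $l_A$) of the representative $\g^*$ matches that of $\g^{-1}$ since it depends only on the eigenvalues.
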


\section{Edge zeta functions of $X_\G$}

\subsection{Type $1$ and type $2$ edge zeta functions of $X_\G$}
Recall that a type $1$ or $2$ tailless $1$-geodesic is a geodesic in $X_\G$. A cycle is primitive if it is not a repetition of a shorter cycle.
Note that every 1-geodesic cycle $C$ is a repetition of a primitive 1-geodesic cycle $C'$ and the number of 1-geodesic cycles equivalent to $C$ is the geometric length of $C'$, which is equal to the algebraic length of $C'$ if $C$ is of type 1.

Denote by $N_n(X_\G)$ the number of geodesic type $1$ tailless cycles in $X_\G$ of length $n$. In terms of the operator $L_E$ described in \S \ref{Gamma}, we have $N_n(X_\G) = \Tr L_E^n$ for all integers $n \ge 1$. For $i = 1, 2$, define the type $i$ edge zeta function of $X_\G$ to be
\begin{eqnarray}\label{Z1}
 Z_{1,i}(X_\G, u) = \prod_{[C]} (1- u^{l_A([C])})^{-1},
\end{eqnarray}
where $[C]$ runs through the equivalence classes of tailless primitive geodesic cycles of type $i$ in $X_\G$, and $l_A([C ])$ is the algebraic length of any cycle in $[C]$. Similar to
Hashimoto's result \cite{Ha} for graphs, we have

\begin{proposition}\label{Z1andLE}  For $i \in \{1, 2\}$ the type $i$ edge zeta function has the following expressions:

$$ Z_{1,i}(X_\G, u) = \exp\bigg(\sum_{n \ge 1} \frac{N_n(X_\G)}{n}u^{in}\bigg) = \frac{1}{\det(I - L_E u^i)}.$$

\end{proposition}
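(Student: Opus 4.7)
The plan is to prove both equalities by elementary generating-function manipulations, using the identity $N_n(X_\G) = \Tr L_E^n$ stated just before the proposition. The $i=1$ case is the substantive step; the $i=2$ case reduces to it via orientation reversal.

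For $i=1$, the spectral identity $\exp(\sum_{n \geq 1} \frac{N_n(X_\G)}{n} u^n) = 1/\det(I - L_E u)$ follows immediately from the classical trace--determinant expansion
\begin{equation*}
-\log\det(I - L_E u) \;=\; \Tr \log(I - L_E u)^{-1} \;=\; \sum_{n \geq 1} \frac{\Tr L_E^n}{n}\, u^n,
\end{equation*}
combined with $N_n(X_\G) = \Tr L_E^n$. For the Euler-product side, I would take the logarithm of the defining product, expand $-\log(1-u^d) = \sum_{m \geq 1} u^{md}/m$, and collect terms by $n = m\,l_A([C'])$ to obtain
\begin{equation*}
\log Z_{1,1}(X_\G, u) \;=\; \sum_{n \geq 1} \frac{u^n}{n} \sum_{\substack{[C']\text{ prim.\ tailless type }1\\ l_A([C']) \mid n}} l_A([C']).
\end{equation*}
The inner sum equals $N_n(X_\G)$: every tailless type $1$ cycle of length $n$ is the $(n/d)$-fold repetition of a unique primitive class $[C']$ of algebraic length $d \mid n$, and (as noted in the preamble to the proposition) the equivalence class of such a repetition contains exactly $d$ cycles, so the aggregate count $N_n(X_\G)$ equals $\sum_{[C'],\,d\mid n} d$.

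For $i=2$, orientation reversal provides a bijection between primitive tailless type $1$ cycles and primitive tailless type $2$ cycles preserving the number of underlying edges. However, a type $1$ cycle has type $(n,0)$ with $l_A=n$, whereas its reverse has type $(0,n)$ with $l_A=2n$. Hence
\begin{equation*}
Z_{1,2}(X_\G, u) \;=\; \prod_{[C']\text{ prim.\ tailless type }1} (1 - u^{2 l_A([C'])})^{-1} \;=\; Z_{1,1}(X_\G, u^2),
\end{equation*}
and both desired identities for $i=2$ follow from those for $i=1$ by the substitution $u \mapsto u^2$.

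The main (and essentially only) delicate point is the combinatorial bookkeeping that a length-$n$ cyclic repetition of a length-$d$ primitive type $1$ cycle has an equivalence class of size $d$ rather than $n$ (a periodicity phenomenon); granting this, the Euler-product side collapses cleanly to $\exp(\sum_n N_n(X_\G) u^n/n)$, and the remainder of the argument --- the spectral identity and the $i=2$ reduction --- is formal.
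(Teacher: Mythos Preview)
Your proof is correct and follows essentially the same route as the paper's own argument: the paper also reduces to $i=1$ via orientation reversal (noting that type $2$ edges are the opposites of type $1$ edges with twice the algebraic length), expands the Euler product, uses the key fact that an equivalence class of a repetition of a primitive cycle of length $d$ contains exactly $d$ cycles to identify the coefficient as $N_n(X_\G)=\Tr L_E^n$, and then invokes the trace--determinant identity. The only cosmetic difference is that the paper works with the logarithmic derivative $u\frac{d}{du}\log$ rather than $\log$ directly, and cites the trace--determinant expansion from \cite{ST} rather than writing it out.
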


\begin{proof} Since type $2$ edges are the opposite of type $1$ edges but with twice algebraic length, the number $N_n(X_\G)$ also counts tailless geodesic cycles in $X_\G$ using only type $2$ edges and with algebraic length $2n$. It suffices to prove the case $i=1$.  Taking the logarithmic derivative of (\ref{Z1}) yields
\begin{eqnarray*}
u\frac{d}{du} \log Z_{1,1}(X_\G, u) =  \sum_{[C]} \sum_{m \ge 1} l_A([C]) u^{l_A([C])m}
= \sum_C \sum_{m \ge 1} u^{l_A(C)m}
\end{eqnarray*}
since each primitive class $[C]$ consists of $l_A([C])$ cycles. Here $C$ runs through all tailless primitive geodesic cycles in $X_\G$ of type $1$. Clearly any such $C$ repeated $m$ times is a tailless geodesic cycle with algebraic length $l_A([C])m$, and we obtain all tailless geodesic cycles of type $1$ this way. So the last sum can be rewritten as
$$ \sum_{n \ge 1} N_n(X_\G) u^n = \sum_{n \ge 1} \Tr L_E^n u^n,$$
which, by Lemma 3 of
\cite{ST}, is equal to
$$ u\frac{d}{du} \det(I - L_E u)^{-1}.$$
This proves the proposition up to constant multiples. Finally noting that, as formal power series in $u$, all three expressions have the same constant term, we conclude the equality.
\end{proof}

In the next two sections, we shall enumerate $N_n(X_\G)$ by relating them to conjugacy classes of $\G$.

\section{Homotopy cycles in $[\g]$ for $\g$ split or irregular}
Let $|~|$ be the valuation on $F$ such that $|\pi| = q^{-1}$.
In this section we fix a split or irregular $\g \in [\G]$ with rational form
$r_\g = \diag(1, a, b)$, where $\ord_{\pi}$$ b \ge \ord_{\pi}$$ a \ge
0$.

\subsection{Minimal lengths of homotopy cycles in $[\g]$}

We begin by proving the first assertion of Theorem \ref{charlAandlG}
for the split and irregular cases.

\begin{theorem}\label{minlength}
Suppose $\g \in \G$ is split {or irregular} with $r_\g = \diag(1, a, b)$, where
$\ord_{\pi}$$ b \ge \ord_{\pi}$$ a \ge 0$. Then
\begin{itemize}
\item[(1)]
$l_A([\g])= \ord_{\pi}$$ a + \ord_{\pi}$$ b = min_{\kappa_\g(gKZ) \in
[\g]} ~l_A(\kappa_\g(gKZ))$ and

\item[(2)] $l_G([\g])=
 \ord_{\pi}$$ b = min_{\kappa_\g(gKZ) \in
[\g]} ~l_G(\kappa_\g(gKZ))$.
\end{itemize}
\end{theorem}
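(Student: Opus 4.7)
The plan is to derive both parts from Proposition \ref{canonicallength}, exploiting the fact that $r_\g=\diag(1,a,b)$ is itself minimally integral (since the $(1,1)$-entry is a unit), and the same persists for every power $r_\g^n=\diag(1,a^n,b^n)$. For part (1) I would first compute $l_A(r_\g^n)=n(\ord_\pi a+\ord_\pi b)$ directly, which forces $L_A(r_\g)=l_A(r_\g)=\ord_\pi a+\ord_\pi b$. Conjugation-invariance of $L_A$ (Proposition \ref{canonicallength}(1)) together with the $Z$-invariance of $l_A$ then transfers this to $\g$, giving $L_A(\g)=\ord_\pi a+\ord_\pi b$. Since $r_\g\in T_{\ord_\pi b-\ord_\pi a,\,\ord_\pi a}$, the definition of $l_A([\g])$ yields $l_A([\g])=\ord_\pi a+\ord_\pi b$, which is the first equality of (1).

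For the minimization in (1), I would write $l_A(\kappa_\g(gKZ))=l_A(g^{-1}\g g)$, and apply part 3 of Proposition \ref{canonicallength} together with conjugation-invariance:
\[
l_A(\kappa_\g(gKZ))=l_A(g^{-1}\g g)\ \ge\ L_A(g^{-1}\g g)\ =\ L_A(\g)\ =\ l_A([\g]).
\]
The bound is attained at $g=P_\g$, since $P_\g^{-1}\g P_\g=r_\g z_\g^{-1}$ differs from $r_\g$ only by the scalar $z_\g^{-1}$, and $l_A$ ignores such scalars. For part (2) the plan is to apply (1) to $\g^{-1}$, which is again split or irregular with rational form (up to a scalar) $\diag(1,b/a,b)$; this yields $l_A([\g^{-1}])=2\ord_\pi b-\ord_\pi a$. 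The cycle $\kappa_{\g^{-1}}(gKZ)$ is $\kappa_\g(gKZ)$ traversed in reverse, so if $\kappa_\g(gKZ)$ has type $(i,j)$ then $\kappa_{\g^{-1}}(gKZ)$ has type $(j,i)$, both with geometric length $i+j$, and
\[
l_A(\kappa_\g(gKZ))+l_A(\kappa_{\g^{-1}}(gKZ))=(i+2j)+(2i+j)=3(i+j)=3\,l_G(\kappa_\g(gKZ)).
\]
Applying the lower bound from (1) to each summand gives
\[
3\,l_G(\kappa_\g(gKZ))\ \ge\ l_A([\g])+l_A([\g^{-1}])\ =\ 3\ord_\pi b\ =\ 3\,l_G([\g]),
\]
where $l_G([\g])=\ord_\pi b$ follows from the same $T_{n,m}$ computation as above; equality is again attained at $g=P_\g$.

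I do not anticipate a serious obstacle. The only points demanding care are verifying that $r_\g$ and all its positive powers are minimally integral—immediate from the presence of the unit $1$ in the diagonal—and keeping the $T_{n,m}$ indexing $(n=\ord_\pi b-\ord_\pi a,\ m=\ord_\pi a)$ straight so that both the algebraic and geometric lengths of $[\g]$ match the eigenvalue data. The irregular case goes through verbatim because such $\g$ is semisimple by the classification in \S\ref{classificationofGamma} and is still conjugate, up to a scalar, to a diagonal $\diag(1,a,b)$. The rank-one split case, where $r_\g$ is a block companion matrix rather than diagonal, requires a separate argument and is addressed in Theorem \ref{rankoneminlength}.
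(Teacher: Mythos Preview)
Your proof is correct and takes a genuinely different route from the paper. The paper proceeds by the Iwasawa decomposition $G=C_G(r_\g)UK$ (with $U$ the upper-triangular unipotents, or a smaller $U_0$ in the irregular case), writes $g^{-1}r_\g g$ explicitly for $g\in U$, and reads off the elementary divisors $e_1,e_2,e_3$ via minors; this yields the inequalities $e_1\le 0$ and $e_1+e_2\le \ord_\pi a$ directly. Your argument instead passes through the canonical algebraic length $L_A$ from Proposition~\ref{canonicallength}: since $r_\g^n=\diag(1,a^n,b^n)$ is minimally integral for every $n$, you get $L_A(\g)=l_A(r_\g)$ at once, and then $l_A(g^{-1}\g g)\ge L_A(g^{-1}\g g)=L_A(\g)$ by conjugation invariance. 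Your handling of (2) via the reversed cycle and the identity $l_A(\kappa_\g(gKZ))+l_A(\kappa_{\g^{-1}}(gKZ))=3\,l_G(\kappa_\g(gKZ))$ is clean and avoids any further computation.

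The trade-off is that the paper's explicit calculation does more than prove the theorem: the formulas~(\ref{e1}),~(\ref{e1+e2}),~(\ref{e1+e2+e3}) for the elementary divisors of $g^{-1}r_\g g$ are reused immediately afterward to establish Corollary~\ref{typeoftailless} (that algebraically minimal implies geometrically minimal when $[\g]$ has type~$1$), Corollary~\ref{primitivesplit} (the explicit description of $\Delta_A([\g])$), and the counting in Theorems~\ref{numberinaclass} and~\ref{type0cyclesinaclass}. Your argument proves the theorem as stated but does not set up that machinery, so if you intend to continue through \S7.2--7.3 you will still need the Iwasawa computation at some point. Conversely, your approach is more conceptual, requires no case analysis between split and irregular, and---since Proposition~\ref{canonicallength} allows diagonalization over any extension $L$---would in fact also cover the rank-one split case for part~(1) without the separate argument of Theorem~\ref{rankoneminlength}.
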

\begin{proof} For $\g$ split, the centralizer
$C_G(r_\g)$ consists of the diagonal matrices in $G$ so that, by Iwasawa decomposition, $G = C_G(r_\g)UK$, where
$$ U = \bigg \{\left(\begin{matrix} 1 & x & y \\ & 1 & z \\ & &
1\end{matrix}\right) ~|~ x, y, z \in F~ \rm{modulo}~\oo \bigg \}.$$  It suffices to consider the
lengths of $\kappa_\g(P_\g gKZ)$ with $g \in U$. Write $ g =
\left(\begin{matrix} 1 & x & y \\ & 1 & z \\ & &
1\end{matrix}\right)$.  Then
\begin{eqnarray*}(P_\g g)^{-1} \g P_\g g z_\g = g\m r_\g g &=& \left(\begin{matrix} 1 & x & y \\ & 1 & z \\ & &
1\end{matrix}\right)\m \left(\begin{matrix} 1 &  &  \\ & a &  \\
& & b\end{matrix}\right)\left(\begin{matrix} 1 & x & y \\ & 1 & z \\
& & 1\end{matrix}\right) \\
&=&   \left(\begin{matrix} 1 & x(1-a) & y(1-b)+xz(b-a) \\
& a & z(a-b) \\ & & b\end{matrix}\right) \in
K \left(\begin{matrix} \pi^{e_1} &  &  \\
& \pi^{e_2} & \\ & & \pi^{e_3}\end{matrix}\right) K
\end{eqnarray*}
for some integers $e_1 \le e_2 \le e_3$. In fact, for $1 \le i \le
3$, $e_1 + \cdots + e_i = min_{y} ~\{\ord_{\pi}$$ y\}$ where $y$ runs
through the determinant of all $i \times i$ minors of $g^{-1}r_\g
g$.
Consequently,
\begin{eqnarray}\label{e1}
e_1 = \min \{0, ~\ord_{\pi} x(1-a), ~\ord_{\pi} z(a-b), ~\ord_{\pi}
(y(1-b)+xz(b-a)) \} \leq 0,
\end{eqnarray}
\begin{eqnarray}\label{e1+e2}
e_1+e_2 = \min \{\ord_{\pi} a, ~\ord_{\pi} [x(1-a)z(a-b) -
a(y(1-b)+xz(b-a))] \} \leq \ord_{\pi} a, \end{eqnarray} and
\begin{eqnarray}\label{e1+e2+e3}
e_1+e_2+e_3 = \ord_{\pi} a + \ord_{\pi} b.
\end{eqnarray}
In particular, $e_3 \ge \ord_{\pi}$ $ b$ from the last two
inequalities. Moreover, we have, for any $g \in G$,
\begin{eqnarray}\label{alglengthbound}
\qquad l_A(\kappa_\g(P_\g gKZ))=e_3+e_2+e_1-3e_1=\ord_{\pi} a + \ord_{\pi}
b - 3e_1\geq \ord_{\pi} a + \ord_{\pi} b = l_A([\g])
\end{eqnarray}
 and
\begin{eqnarray}\label{geomlengthbound}
 l_G(\kappa_\g(P_\g gKZ))=e_3-e_1\geq
\ord_{\pi} b - e_1 \geq \ord_{\pi} b = l_G([\g]).
\end{eqnarray}
As noted before, the equalities in (\ref{alglengthbound}) and
(\ref{geomlengthbound}) hold for $g \in C_G(r_\g)$. Therefore
\begin{eqnarray*}
l_A([\g]) = \min_{\kappa_\g(gKZ) \in [\g]} l_A(\kappa_\g(gKZ)) \qquad
\text{ and} \qquad l_G([\g]) = \min_{\kappa_\g(gKZ) \in [\g]}
l_G(\kappa_\g(gKZ)).
\end{eqnarray*}

{For $\g$ irregular, we have either $a = b$ or $a=1$, and the centralizer $C_G(r_\g)$ is isomorphic to $ \GL_2(F)\times Z$  and $G = C_G(r_\g)U_0K$, where $U_0$ consists of the elements in $U$ with $z=0$ (when $a=b$) or $x=0$ (when $a=1$). The above argument still holds.}
This proves the theorem.
\end{proof}

The proof above shows that if $\kappa_\g(P_\g gKZ)$ is algebraically minimal,
then $e_1 = 0$; and it is geometrically minimal
if the additional condition  $e_1 + e_2 =
\ord_{\pi}$$ a$ is satisfied.  By (\ref{e1+e2}), this obviously holds
when $\ord_{\pi}$$ a = 0$, i.e., $\g$ has type $1$. The
proof above also shows that for $\g$  irregular of type $1$, a tailless $\kappa_\g(P_\g gKZ)$ has $g \in C_G(r_\g)K$.
We record this in

\begin{corollary}\label{typeoftailless} Suppose $[\g]$ has type $1$. Then algebraically minimal cycles in $[\g]$ are geometrically minimal, hence they agree with the tailless cycles in  $[\g]$.
Moreover, if $\g$ is irregular and has type $1$, then the tailless cycles in $[\g]$ are $\kappa_\g(P_\g gKZ)$ with $g \in C_G(r_\g)K$.
\end{corollary}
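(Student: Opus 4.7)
The plan is to mine the Cartan-form analysis already carried out in the proof of Theorem \ref{minlength} and specialize it to $[\g]$ of type $1$. Under the decomposition $G = C_G(r_\g)U_0 K$, it suffices to study representatives $g \in U_0$, for which $g^{-1} r_\g g$ lies in $K\diag(\pi^{e_1},\pi^{e_2},\pi^{e_3})K$ subject to the constraints $e_1 \le 0$, $e_1+e_2 \le \ord_\pi a$, $e_3 \ge \ord_\pi b$, and $e_1+e_2+e_3 = \ord_\pi a + \ord_\pi b$ established in (\ref{e1})--(\ref{e1+e2+e3}).

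For the first assertion, since $[\g]$ is of type $1$ we have $\ord_\pi a = 0$. Algebraic minimality, by (\ref{alglengthbound}), is equivalent to $e_1 = 0$. But then $e_1 + e_2 \le 0$ combined with $e_2 \ge e_1 = 0$ forces $e_2 = 0$, and hence $e_3 = \ord_\pi b$. The geometric length $e_3 - e_1 = \ord_\pi b$ then equals $l_G([\g])$, so the cycle is geometrically minimal. The converse implication comes free from Proposition \ref{geometricalminimalequaltothesametype}, and Corollary \ref{taillesscriterion3} identifies geometrically minimal cycles in $[\g]$ of type $1$ with tailless ones. So the three notions coincide.

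For the second assertion, I would first rule out the possibility $a = b$ in the irregular case: the matrix $\diag(1,a,a)$ is $Z$-equivalent to $\diag(\pi^{-\ord_\pi a},1,1)$, which lies in $T_{0,\ord_\pi a}$, giving a type $2$ class. Hence an irregular $[\g]$ of type $1$ forces $a = 1$, with $r_\g = \diag(1,1,b)$ and $\ord_\pi b = n > 0$, so $C_G(r_\g)$ is the Levi $\GL_2(F) \times \GL_1(F)$ of the upper $(2,1)$-parabolic and $U_0 = \bigl\{\bigl(\begin{smallmatrix}1 & 0 & y \\ 0 & 1 & z \\ 0 & 0 & 1\end{smallmatrix}\bigr)\bigr\}$. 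Substituting $a = 1$, $x = 0$ into (\ref{e1}) and using $\ord_\pi(1-b) = 0$ yields $e_1 = \min\{0,\ord_\pi y, \ord_\pi z\}$. Thus $e_1 = 0$ precisely when $y, z \in \oo$, that is, when the $U_0$-representative lies in $U_0 \cap K$. Combined with the first assertion, tailless cycles in $[\g]$ correspond exactly to $g \in C_G(r_\g)\cdot (U_0 \cap K)\cdot K = C_G(r_\g)K$, which is the claim.

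The substantive content is packaged in Theorem \ref{minlength}; the main thing to watch is the case analysis for the irregular class, namely verifying that the equality $a = b$ is incompatible with type $1$ so that the parabolic and its unipotent radical $U_0$ are the ones for which $U_0 \cap K$ manifestly cuts out the algebraically minimal locus.
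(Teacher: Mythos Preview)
Your argument is correct and is precisely the one the paper has in mind: it simply says ``the proof above shows'' that algebraic minimality forces $e_1=0$, and that under $\ord_\pi a = 0$ the inequality $e_1+e_2\le \ord_\pi a$ then pins $e_2=0$, giving geometric minimality; for the irregular type~$1$ clause you correctly identify $a=1$, $U_0=\{x=0\}$, and read off $e_1=\min\{0,\ord_\pi y,\ord_\pi z\}$, so that $e_1=0$ forces the $U_0$-representative into $K$. One cosmetic point: in your opening paragraph you write $G=C_G(r_\g)U_0K$, but the first assertion also covers split $\g$, where the decomposition uses $U$ rather than $U_0$; the inequalities (\ref{e1})--(\ref{e1+e2+e3}) hold verbatim in both cases, so the argument is unaffected.
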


\subsection{Counting homotopy cycles in $[\g]$ in algebraic length}\label{number-split}
As discuss in \S \ref{volume}, the number of
algebraically minimal cycles in $[\g]$ is the cardinality of
$C_{P_\g^{-1}\Gamma P_\g}(r_\g)\backslash \Delta_A([\g])$ with $\Delta_A([\g])$ defined by (\ref{deltaA}). We showed in the previous section that
for $\g$ irregular,  $\Delta_A([\g]) = C_G(r_\g)K/KZ$ so that the number of algebraically tailless cycles in $[\g]$ is equal to $\vol([\g])$ given by (\ref{fundamentaldomain'}).

The following theorem, stated in terms of a formal power series,
counts the number of homotopy cycles in
$[\g]$ with given algebraic length.

\begin{theorem}\label{numberinaclass}
 Suppose $\g \in [\G]$ is split or irregular with $r_\g = \diag(1, a, b)$. Then
 $$\#(C_{P_\g^{-1}\Gamma P_\g}(r_\g)\backslash \Delta_A([\g])) = \vol([\g])\omega_{[\g]}$$ and
$$\sum_{\kappa_\g(gKZ) \in [\g]} u^{l_A(\kappa_\g(gKZ))} =
\left\{
\begin{array}{lcl}
 \vol([\g])\cdot {\omega_{[\g]}}\cdot u^{l_A([\g])}\frac{1-u^3}{1-q^3
u^3} && {\. if \,}~\g~{\, splits \,},\\
\vol([\g])\cdot {\omega_{[\g]}}\cdot u^{l_A([\g])}\frac{1-u^3}{1-q^2
u^3} && {\, if \,} ~\g~{\,is ~ irregular\,}. \\
\end{array}
\right.
$$
Here $\vol([\g])$ is given by
(\ref{fundamentaldomain'}), {$\omega_{[\g]} = (|1-a||a-b||b-1|)^{-1}$ for $\g$ split, and $\omega_{[\g]} = 1$ for $\g$ irregular.}
\end{theorem}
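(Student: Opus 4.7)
The plan is to parametrize the classes in $[\g]$ via the Iwasawa decomposition and then enumerate by direct computation inside the unipotent subgroup. By Iwasawa, $G = C_G(r_\g) \cdot U_* \cdot K$ with $U_* = U$ in the split case and $U_* = U_0$ in the irregular case; writing $g = c\nu k$, the matrix $g^{-1} r_\g g$ is $K$-double-coset equivalent to $\nu^{-1} r_\g \nu$, so Theorem \ref{minlength} gives $l_A(\kappa_\g(P_\g gKZ)) = l_A([\g]) - 3 e_1(\nu)$, where $e_1(\nu) \le 0$ is the minimum in (\ref{e1}). Setting $\Delta_A^k([\g]) := \{gKZ : l_A(\kappa_\g(P_\g gKZ)) = l_A([\g]) + 3k\}$ and $\Omega_k := \{\nu \in U_*/(U_* \cap K) : e_1(\nu) = -k\}$, the goal is to prove $\#(C_{P_\g^{-1}\G P_\g}(r_\g) \backslash \Delta_A^k([\g])) = |\Omega_k|\cdot\vol([\g])$ for every $k \ge 0$.

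The first step is to compute $|\Omega_k|$ directly from (\ref{e1}). In the split case, writing $\nu = u_{x,y,z}$, the condition $e_1(\nu) \ge -k$ is equivalent to $\ord_\pi x \ge -k - \ord_\pi(1-a)$, $\ord_\pi z \ge -k - \ord_\pi(a-b)$, and $\ord_\pi(y(1-b) + xz(b-a)) \ge -k$; counting triples modulo $\oo^3$ gives $q^{3k}\omega_{[\g]}$ such $\nu$, so by subtraction $|\Omega_0| = \omega_{[\g]}$ and $|\Omega_k| = (q^3-1)q^{3(k-1)}\omega_{[\g]}$ for $k \ge 1$. The irregular case, with $U_0 \cong F^2$ and only two free unipotent coordinates, gives analogously $|\Omega_0| = 1 = \omega_{[\g]}$ and $|\Omega_k| = (q^2-1)q^{2(k-1)}$ for $k \ge 1$.

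Next I translate the unipotent count into the desired orbit count. A direct analysis of when $\nu_2^{-1} d \nu_1 \in KZ$ for $\nu_1, \nu_2 \in U_*$ and $d \in C_G(r_\g)$, using the scaling freedom of the center $Z$, shows that the $C_G(r_\g)$-orbits on $\Omega_k \cdot KZ/KZ$ coincide with the $H'$-orbits on $\Omega_k$, where $H' := C_G(r_\g) \cap KZ$. Consequently $|\Omega_k| = \sum_{[\nu]}[H' : H_\nu]$, summed over $C_G(r_\g)$-orbit representatives, with $H_\nu := C_G(r_\g) \cap \nu KZ \nu^{-1}$. Since $\G$ intersects every conjugate of $KZ$ trivially, so does $C_{P_\g^{-1}\G P_\g}(r_\g)$, and the standard covering argument gives
\begin{equation*}
\#\bigl(C_{P_\g^{-1}\G P_\g}(r_\g) \backslash C_G(r_\g)/H_\nu\bigr) = [H':H_\nu] \cdot \vol([\g]).
\end{equation*}
Summing over orbits yields $\#(C_{P_\g^{-1}\G P_\g}(r_\g) \backslash \Delta_A^k([\g])) = |\Omega_k|\cdot\vol([\g])$; the $k=0$ instance is the first assertion, and the generating function follows by multiplying by $u^{l_A([\g])+3k}$ and summing, with the inner series telescoping to $\omega_{[\g]}(1-u^3)/(1-q^3u^3)$ for split and $(1-u^3)/(1-q^2u^3)$ for irregular.

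The main technical obstacle will be verifying rigorously the coincidence of $C_G(r_\g)$- and $H'$-orbits on $\Omega_k\cdot KZ/KZ$. In the split case this is essentially immediate: $C_G(r_\g)$ is a torus, and rescaling by $Z$ only absorbs a common factor, forcing the diagonal of any $d$ used in the equivalence to lie in $\oo^\times$. In the irregular case, however, $C_G(r_\g) \cong \GL_2(F) \times F^\times$ is nonabelian, and elements of nontrivial determinant must be compensated by central rescaling; one must therefore carefully track valuations through the factorization to see that the effective action on the unipotent coordinate still reduces to the compact subgroup $\GL_2(\oo) \times \oo^\times$.
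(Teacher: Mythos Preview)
Your approach is essentially the same as the paper's, with identical unipotent counting and the same core observation that if $\nu_2^{-1} d \nu_1 \in KZ$ with $d\in C_G(r_\g)$ and $\nu_i\in U_*$, then necessarily $d\in C_G(r_\g)\cap KZ$. The paper packages this observation as a clean bijection lemma: if $S$ is a set of representatives for $C_{P_\g^{-1}\G P_\g}(r_\g)\backslash C_G(r_\g)/(C_G(r_\g)\cap KZ)$, then $(h,u)\mapsto C_{P_\g^{-1}\G P_\g}(r_\g)\,hu\,KZ$ is a bijection from $S\times W$ onto $C_{P_\g^{-1}\G P_\g}(r_\g)\backslash G/KZ$; the factorization then drops out immediately without any orbit--stabilizer bookkeeping. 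Your route through $H'$-orbits and the identity $\#(\G'\backslash C_G(r_\g)/H_\nu)=[H':H_\nu]\cdot\vol([\g])$ is correct but a little more circuitous.

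Regarding the ``main technical obstacle'' you flag in the irregular case: it is not really an obstacle. Write $d=\begin{pmatrix} d_1 & 0\\ 0 & A\end{pmatrix}$ with $A\in\GL_2(F)$ (taking $a=b$ for concreteness). Then $\nu_2^{-1}d\nu_1$ is block upper-triangular with diagonal blocks $d_1$ and $A$; requiring this to lie in $KZ$ forces, after choosing the scalar $z$, both $z^{-1}d_1\in\oo^\times$ and $z^{-1}A\in\GL_2(\oo)$ (the latter because both $z^{-1}A$ and its inverse $zA^{-1}$ must be integral). Hence $d\in z(C_G(r_\g)\cap K)\subset C_G(r_\g)\cap KZ=H'$, exactly as in the split case. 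So the nonabelian nature of $C_G(r_\g)$ causes no difficulty, and your outline goes through without further work.
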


\begin{proof} {If $\g$ is split, the stabilizer $C_G(r_\g)$ is the diagonal subgroup of $G$ and $G = C_G(r_\g)UKZ$; while if $\g$ is irregular, the stabilizer $C_G(r_\g)$ is $\GL_2(F) Z$, where $\GL_2(F)$ is imbedded in $G$ as diagonal block $\GL_2(F) \times \{ 1\}$ (when $a=1$) or $\{1 \} \times \GL_2(F)$ (when $a=b$), and $G = C_G(r_\g)U_0KZ$. Here $U$ and $U_0$ are as in the proof of Theorem \ref{minlength}. Put $W = U$ or $U_0$ according as $\g$ split or irregular. Then $(C_G(r_\g) \cap KZ)WKZ = WKZ$. Suppose $S$ represents the double cosets in $C_{P_\g^{-1} \G P_\g}(r_\g) \backslash C_G(r_\g)/(C_G(r_\g) \cap KZ)$, then $S$ has cardinality $\rm{vol}([\g])$ by (\ref{fundamentaldomain'}), and $$G = \cup_{h \in S} C_{P_\g^{-1} \G P_\g}(r_\g) h (C_G(r_\g) \cap KZ)WKZ = \cup_{h \in S} C_{P_\g^{-1} \G P_\g}(r_\g) h WKZ.$$

 \begin{lemma} For $\g \in \G$ split or irregular, the elements  $h u$ with $h \in S$ and $u \in W$, where $S$ and $W$ are defined above, are double coset representatives of $C_{P_\g^{-1} \G P_\g}(r_\g) \backslash G/KZ$.
\end{lemma}

\begin{proof} Suppose   $C_{P_\g^{-1} \G P_\g}(r_\g) huKZ = C_{P_\g^{-1} \G P_\g}(r_\g) h' u'KZ$ for $h, h' \in S$ and $u, u' \in W$. Then there is some $c \in C_{P_\g^{-1} \G P_\g}(r_\g)$ such that $huKZ = ch'u'KZ$, i.e., $u^{-1}h^{-1}c h' u' \in KZ$. This together with the definition of $W$ implies that $h^{-1}ch' \in C_G(r_\g)\cap KZ$. Therefore $h$ and $h'$ in $S$ represent the same double coset of $C_G(r_\g)$, hence $h = h'$.  On the other hand, since $\G$ intersects $gZKg^{-1}$ trivially for all $g \in G$ by assumption, the same holds for its conjugate $h^{-1}P_\g^{-1}\G P_\g h$. Now $h^{-1}ch \in (h^{-1}P_\g^{-1}\G P_\g h) \cap KZ$, hence is equal to the identity in $G$. So $c = id$ and consequently $uKZ = u'KZ$. This implies $u = u'$ by definition of $W$, as desired.
\end{proof}

 Since $\kappa_\g(P_\g hgKZ)$ and $\kappa_\g(P_\g gKZ)$ have the same algebraic length for $h \in C_G(r_\g)$ and $g \in G$, we get
$$\sum_{\kappa_\g(P_\g gKZ) \in [\g]} u^{l_A(\kappa_\g(P_\g gKZ))} =
\vol([\g]) ~\sum_{v \in W}  u^{l_A(\kappa_{\g}(P_\g vKZ))},$$
where $W = U$ or $U_0$ according to $\g$ split or irregular.}

To proceed, we compute the sum on the right hand side. First assume $\g$ split so that $W = U$.
Given $v \in U$, write $v = \left(
\begin{matrix}
1 & x & y\\
& 1 & z\\
& & 1
\end{matrix} \right)$.
As computed in the proof of Theorem
\ref{minlength},

$$ (P_\g v)^{-1} \g P_\g v z_\g=  v^{\text{-}1}r_\g v =
\left(
\begin{matrix}
1 & x(1-a) & y(1-b)+ xz(b-a)\\
& a & z(a-b)\\
& & b
\end{matrix} \right) = (v_{i, j}).$$
For fixed $m \ge 0$, we count the number of $v$'s such that
$l_A(\kappa_{\g}(P_\g vKZ)) \leq l_A([\g])+3m$. By
(\ref{alglengthbound}), the constraints are  $ |v_{ij}| \leq q^{m}$
for all $1 \le i,j \le 3$. In other words,
\begin{eqnarray}\label{3m}
 |x(1-a)| \leq q^{m},\quad |z(a-b)|\leq q^{m}
\quad {\rm and} \quad |y(1-b)+ xz(b-a)|\leq q^{m}.
\end{eqnarray}  This implies
$$ |x| \leq q^{m}|1-a|^{-1} \quad {\rm and} \quad |z| \leq
q^{m}|a-b|^{-1} $$ so that the numbers of $x$ and $z$ in
$F/\mathcal{O}_F$ are $q^{m}|1-a|^{-1}$ and $q^m|a-b|^{-1}$,
respectively. Further, for chosen $x$ and $z$, there are
$q^{m}|1-b|^{-1}$ choices of $y$ satisfying the above constraint. We
have shown
\begin{eqnarray}\label{taillessinU}
\#\big\{v \in U \big| ~l_A(\kappa_{\g}(P_\g vKZ))=l_A([\g]) \big\} =
(|1-a||a-b||b-1|)^{-1} = \omega_{[\g]}
\end{eqnarray} and, for $m > 0$,
\begin{eqnarray}\label{taillength3minU}
\#\big\{v \in U \big| ~l_A(\kappa_{\g}(P_\g vKZ))=l_A([\g])+3m \big\}
= (q^{3m}-q^{3m-3})\omega_{[\g]}.
\end{eqnarray}
Put together, this gives
\begin{eqnarray*} \sum_{v \in U} u^{l_A(\kappa_{\g}(P_\g vKZ))} =
\omega_{[\g]}u^{l_A([\g])}\bigg(1+ \sum_{m \ge 1}(q^{3m}-q^{3m-3})u^{3m}\bigg)
=\omega_{[\g]}u^{l_A([\g])}\bigg(\frac{1-
u^3}{1-q^3u^3}\bigg).
\end{eqnarray*}

Next consider the case $\g$ irregular so that $W = U_0$. 
Recall that $U_0$ consists of elements in $U$ with $z = 0$ (when $a=b$) or $x=0$ (when $a=1$). Note that $\ord_\pi b > 0$, for otherwise $\g$ would lie in the intersection of $\Gamma$ with a conjugate of $K$, which is trivial. Consequently, $1 - b$ is a unit in $\oo$ so that $|1 - b| = 1$. The argument above restricted to elements in $U_0$ goes through as before, but the three inequalities in (\ref{3m}) are reduced to two with either $x(1-a) = 0$ or $z(a-b) = 0$. This then shows that the number of nonzero $x$ or $z$ is $q^m-1$ and the number of $y$ is $q^m$. Hence we obtain
\begin{eqnarray}\label{taillength3minU0}
\#\big\{v \in U_0 \big| ~l_A(\kappa_{\g}(P_\g vKZ))=l_A([\g])+3m \big\}
= q^{2m}-q^{2m-2},
\end{eqnarray}
which in turn gives
\begin{eqnarray*}
\sum_{v \in U_0} u^{l_A(\kappa_{\g}(P_\g vKZ))} =
u^{l_A([\g])}\bigg(1 + \sum_{m \ge 1} (q^{2m} - q^{2m-2}) u^{3m} \bigg)
= \omega_{[\g]}u^{l_A([\g])} \bigg(\frac{1- u^3}{1-q^2u^3}\bigg).
\end{eqnarray*}
\end{proof}

\subsection{Counting homotopy cycles of type $1$ in $[\g]$}

The theorem below gives the number of type $1$ homotopy cycles in
 $[\g]$ of given algebraic length. The result depends on
the type of $[\g]$.

\begin{theorem}\label{type0cyclesinaclass} With the same notation as in Theorem \ref{numberinaclass}, we have: 
\item[(A)] If $[\g]$ splits and is not of type $1$, then
$$\sum_{\kappa_\g(gKZ) \in [\g], ~ \text{type $1$}} u^{l_A(\kappa_\g(gKZ))}=
\vol([\g])\omega_{[\g]}u^{l_A([\g])}(1 - q^{-1})( \frac{1-q^2
u^3}{1-q^3u^3}).$$ Moreover, no type $1$ cycles in $[\g]$ are
geometrically minimal.

 \item[(B)] If $[\g]$ splits and has type $1$, then
$$\sum_{\kappa_\g(gKZ) \in [\g], ~ \text{type $1$}} u^{l_A(\kappa_\g(gKZ))}=
\vol([\g])\omega_{[\g]}u^{l_A([\g])}\bigg(q^{-1}+ (1 -
q^{-1})( \frac{1-q^2 u^3}{1-q^3u^3}) \bigg).$$

\item[(C)] Suppose $\g \in \G$ is irregular. Then $[\g]$ contains no cycles of type $1$ if $[\g]$ is not of type $1$; while if $[\g]$ has type $1$, then
$$\sum_{\kappa_\g(gKZ) \in [\g], ~ \text{type $1$}} u^{l_A(\kappa_\g(gKZ))}=
\vol([\g])\omega_{[\g]} u^{l_A([\g])}.$$
\end{theorem}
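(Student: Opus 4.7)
The plan is to refine the argument of Theorem \ref{numberinaclass} by imposing the additional type-$(n, 0)$ condition $e_1 = e_2$ on the Smith normal form invariants of $v^{-1} r_\g v z_\g^{-1}$. I would parameterize by $v \in W$, where $W = U$ for $\g$ split and $W = U_0$ for $\g$ irregular, and start from the matrix
\[
M(v) := v^{-1} r_\g v z_\g^{-1} = \left( \begin{matrix} 1 & \alpha & \gamma \\ & a & \beta \\ & & b \end{matrix} \right),
\]
with $\alpha = x(1-a)$, $\beta = z(a-b)$, $\gamma = y(1-b) + xz(b-a)$. As computed in the proof of Theorem \ref{minlength}, $e_1 = \min\{0, \ord_\pi \alpha, \ord_\pi \beta, \ord_\pi \gamma\}$ and $e_1 + e_2$ is the minimum $\pi$-adic valuation among the nonzero $2 \times 2$ minors $a,\ \beta,\ \alpha \beta - a\gamma,\ b,\ \alpha b,\ ab$. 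The type-$1$ stratum of algebraic length $l_A([\g]) + 3m$ corresponds to $e_1 = e_2 = -m$.

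For part (A), where $\g$ splits with $\ord_\pi a > 0$, I would observe that the minors $a, b, ab, \alpha b$ all have valuation $\geq \ord_\pi a > 0$, that $\ord_\pi \beta \geq -m > -2m$, and that $\ord_\pi(a\gamma) \geq \ord_\pi a - m > -2m$; hence $e_1 + e_2 = -2m$ collapses to $\ord_\pi(\alpha\beta) = -2m$, which combined with $e_1 = -m$ forces $\ord_\pi \alpha = \ord_\pi \beta = -m$ and $\ord_\pi \gamma \geq -m$. Using $|1-a| = |1-b| = 1$ and $|a-b|^{-1} = \omega_{[\g]}$, counting $(x, y, z) \in (F/\oo)^3$ satisfying these conditions gives $\omega_{[\g]}(q^m - q^{m-1})^2 q^m = \omega_{[\g]}(q^{3m} - 2q^{3m-1} + q^{3m-2})$ choices for $m \geq 1$. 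For $m = 0$ all minors must have nonnegative valuation and one must be a unit; since $a, b, ab, \alpha b \in \pi\oo$, the only possibility forces $\alpha = \gamma = 0$ and $\ord_\pi \beta = 0$, yielding $\omega_{[\g]}(1 - q^{-1})$ cycles. Assembling the generating function recovers the stated rational expression. The remark that no type-$1$ cycle is geometrically minimal follows from $n = l_A([\g]) + 3m \geq l_A([\g]) > \ord_\pi b = l_G([\g])$.

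Part (B) runs the same analysis for $m \geq 1$, but now with $|a-b| = |1-b| = 1$ and $\omega_{[\g]} = |1-a|^{-1}$; the boundary $m = 0$ differs because $\ord_\pi a = 0$, so the minor $a$ supplies $e_1 + e_2 = 0$ automatically and every algebraically minimal cycle is already of type $1$, contributing $\omega_{[\g]}$ cycles and the extra $q^{-1}$ term in the generating function. Part (C) proceeds by direct case analysis: when $[\g]$ is not of type $1$, the restriction $W = U_0$ (forcing either $\alpha = 0$ or $\beta = 0$) combined with $\ord_\pi a = \ord_\pi b > 0$ makes $e_1 + e_2 = 2e_1$ unattainable at any $m$, as one verifies case by case; when $[\g]$ has type $1$, necessarily $a = 1$, and the trivial coset $v = \id$ is the unique solution of $e_1 = e_2$, yielding the single term $\vol([\g])u^{l_A([\g])}$ consistent with $\omega_{[\g]} = 1$.

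The main obstacle will be the careful bookkeeping in the $m \geq 1$ counts: translating valuation constraints on $(\alpha, \beta, \gamma)$ into counts over $(x, y, z) \in (F/\oo)^3$ requires tracking how the factor $\omega_{[\g]}$ arises from the fact that $z \mapsto z(a-b)$ is a bijection $F/\oo \to F/(a-b)\oo$ rather than $F/\oo \to F/\oo$, and that the constraint $\ord_\pi \gamma \geq -m$ cuts out a coset of $\pi^{-m}\oo$ in the fiber over $(x, z)$. A second subtlety is that $m = 0$ must be treated separately in each case, because whether the diagonal minors $a, b$ dominate the determinantal gcd depends on the sign of $\ord_\pi a$; this precisely accounts for the structural difference between the leading terms in (A) and (B).
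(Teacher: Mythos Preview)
Your proposal is correct and follows essentially the same approach as the paper: parametrize by $v\in W$ ($W=U$ or $U_0$), impose the type-$1$ condition $e_1=e_2=-m$, and reduce to $\ord_\pi\alpha=\ord_\pi\beta=-m$ with $\ord_\pi\gamma\ge -m$ for $m\ge 1$, handling $m=0$ separately. Your explicit enumeration of all six $2\times 2$ minors is slightly more careful than the paper's compressed two-term formula for $e_1+e_2$, but both routes land on the same constraint (the paper's condition~(3)) and the same counts; the case splits in (A), (B), (C) and the $m=0$ boundary analysis match the paper's as well.
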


\begin{proof} For $\g \in \G$ split or irregular, we have $r_\g = \diag(1, a, b)$, so $[\g]$ has type
$(\ord_{\pi}$$ b - \ord_{\pi}$$ a, ~\ord_{\pi}$$ a)$ and $l_A([\g]) =
\ord_{\pi}$$ b + \ord_{\pi}$$ a$. It has type $1$ if and only of
$~\ord_{\pi}$$ a = 0$. The argument is similar to the proof of Theorem
\ref{numberinaclass}; the difference is that we only need to
consider those $v \in W$ such that $\kappa_{\g}(P_\g vKZ)$ has type
$1$. Here $W = U$ or $U_0$ according as $\g$ split or irregular. So we determine the cardinality of the set
$$\{v \in W~ |~l_G(\kappa_{\g}(P_\g vKZ)) =l_A(\kappa_{\g}(P_\g vKZ)) =
l_A([\g])+3m = \ord_{\pi} ~b + \ord_{\pi} ~a + 3m \}$$ for each $m
\ge 0$. As before, writing $v$ as $\left(
\begin{matrix}
1 & x & y\\
& 1 & z\\
& & 1
\end{matrix} \right)$ and following the proofs of Theorem \ref{numberinaclass} and Theorem \ref{minlength}, we arrive at the
following constraints on $x, y, z \in F/\oo$:
\begin{itemize}
\item[(1)] $\min \{0, ~\ord_{\pi}$$ x(1-a), ~\ord_{\pi}$$ z(a-b),
~\ord_{\pi}$$ (y(1-b)+xz(b-a)) \} = -m,$ ~~~~ and

\item[(2)]  $\min \{\ord_{\pi}$$ a, ~\ord_{\pi}$$ [x(1-a)z(a-b) -
a(y(1-b)+xz(b-a))] \} = -2m.$
\end{itemize}

For $m > 0$, the two constraints are equivalent to
\begin{itemize}
\item[(3)]  $ \ord_{\pi}$$ x(1-a) = -m = \ord_{\pi}$$ z(a-b) ~~\text{
and}~~ \ord_{\pi}$$(y(1-b)+xz(b-a)) \ge -m.$
\end{itemize}
First assume $\g$ splits. The number of $x$ is $(1-q^{-1})q^m|1-a|^{-1}$, the number of
$z$ is $(1-q^{-1})q^m|a-b|^{-1}$, and the number of $y$ is
$q^m|1-b|^{-1}$ so that the total number of $v$ is
$(1-q^{-1})^2q^{3m}\omega_{[\g]}$.  For $m = 0$ and
$\ord_{\pi}$$ a > 0$, the same constraint (3) holds.  In this case the
number of $x$ is $|1-a|^{-1} = 1$, the number of $y$ is $|1-b|^{-1}
= 1$ and the number of $z$ is $(1-q^{-1})|a-b|^{-1}$ so that the
total number of $v$ is $(1-q^{-1})\omega_{[\g]}$. Finally,
when $m = \ord_{\pi}$$ a = 0$, the constraints (1) and (2) are
equivalent to
\begin{itemize}
\item[(4)]  $ \ord_{\pi}$$ x(1-a) \ge 0,~~ \ord_{\pi}$$ z(a-b) \ge 0
~~$ and $~~ \ord_{\pi}$$(y(1-b)+xz(b-a)) \ge 0.$
\end{itemize}
Hence the numbers of $x$, $y$ and $z$ are $|1-a|^{-1}$, $|1-b|^{-1}$
and $|a-b|^{-1}$, respectively, so that the number of $v$ is
$\omega_{[\g]}$. Note that $y = z = 0$ in this case.

Since $\vol([\g])\omega_{[\g]}$ is present in all cases, it
suffices to compute
$$\frac{1}{\vol([\g])\omega_{[\g]}}\sum_{\kappa_\g(gKZ) \in [\g], ~ \text{type $1$}}
u^{l_A(\kappa_\g(gKZ))}.$$ In case $\ord_{\pi}$$ ~a > 0$, namely $[\g]$ does not have type $1$, this sum is
equal to
$$u^{l_A([\g])}(1-q^{-1} + \sum_{m \ge 1}(1-q^{-1})^2q^{3m}u^{3m})
=u^{l_A([\g])}(1 - q^{-1})( \frac{1-q^2 u^3}{1-q^3u^3}),$$ and in
case $\ord_{\pi}$$ ~a = 0$, namely $[\g]$ has type $1$, it is equal to
$$u^{l_A([\g])}(1 + \sum_{m \ge 1}(1-q^{-1})^2q^{3m}u^{3m})
=u^{l_A([\g])}\bigg(q^{-1} + (1 - q^{-1})( \frac{1-q^2
u^3}{1-q^3u^3})\bigg).$$ This proves (A) and (B).

{When $\g$ is irregular, either $a = 1$ or $a = b$, so (3) never holds
and there are no cycles in $[\g]$ of type $1$ and algebraic length $> l_A([\g])$. Further, there are $\rm{vol}([\g])$ cycles in $[\g]$ with algebraic length equal to $l_A([\g])$ and they have the same type as $[\g]$. This proves the assertion (C).}
\end{proof}

Contained in the proof above is the following statement.

\begin{corollary}\label{primitivesplit}
Suppose $\g \in \G$ is split or irregular with $r_\g = \diag(1, a, b)$. Assume
that $\g$ has type $1$, $a \in \oo^\times$ and $n = \ord_{\pi}$$ b$. Let $\delta = \delta([\g]) = \ord_{\pi}$$
(1-a)$ { for $\g$ split, and $\delta = 0$ for $\g$ irregular}. Then
\begin{eqnarray*}
 \Delta_A([\g]) = \{h v_x KZ
~|~ h \in  C_G(r_\g)/(C_G(r_\g) \cap KZ),  ~ v_x = \left(
\begin{matrix}
1 & x & \\
& 1 & \\
& & 1
\end{matrix} \right) ~\text{with} ~
x \in \pi^{-\delta} \oo/\oo \}. \end{eqnarray*}
\end{corollary}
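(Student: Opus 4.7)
The plan is to combine the Iwasawa-type decomposition used in the proof of Theorem \ref{numberinaclass} with the explicit invariant-factor computation in the proof of Theorem \ref{minlength}. Write $W = U$ if $\g$ is split and $W = U_0$ if $\g$ is irregular. By the lemma established inside the proof of Theorem \ref{numberinaclass} together with the Iwasawa-type decomposition $G = C_G(r_\g)WKZ$, every $gKZ \in G/KZ$ can be written as $huKZ$ with $h \in C_G(r_\g)$ and $u \in W$. Since $h$ centralizes $r_\g$, the conjugate $(P_\g hu)^{-1}\g P_\g hu z_\g = u^{-1}r_\g u$ does not depend on $h$, so
\[
l_A(\kappa_\g(P_\g huKZ)) = l_A(\kappa_\g(P_\g uKZ)).
\]
Hence $gKZ \in \Delta_A([\g])$ if and only if $u$ satisfies $l_A(\kappa_\g(P_\g uKZ)) = l_A([\g])$, and the task reduces to describing those $u \in W$ for which $\kappa_\g(P_\g uKZ)$ is algebraically minimal.

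Next I would apply the invariant-factor analysis of Theorem \ref{minlength} to $u = \left(\begin{smallmatrix}1 & x & y\\ & 1 & z \\ & & 1\end{smallmatrix}\right)$. From (\ref{alglengthbound}), algebraic minimality is equivalent to $e_1 = 0$, i.e.,
\[
\min\{0,\ \ord_\pi x(1-a),\ \ord_\pi z(a-b),\ \ord_\pi(y(1-b)+xz(b-a))\} = 0.
\]
Equivalently, all of $x(1-a)$, $z(a-b)$ and $y(1-b)+xz(b-a)$ lie in $\oo$.

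Now I specialize to the hypothesis $n = \ord_\pi b > 0$, $a \in \oo^\times$. Then $|a-b| = 1$ and $|1-b| = 1$, so the second constraint forces $z \in \oo$, i.e. $z = 0$ in $F/\oo$. Substituting $z = 0$ into the third constraint forces $y = 0$ in $F/\oo$. The remaining constraint $\ord_\pi x(1-a) \ge 0$ becomes $\ord_\pi x \ge -\delta$, so that $x \in \pi^{-\delta}\oo/\oo$. This identifies the admissible $u$'s exactly with $v_x$, $x \in \pi^{-\delta}\oo/\oo$. For the irregular case, where we must have $a = 1$ (since $a = b$ together with $\ord_\pi a = 0$ would force $\g$ to be the identity), the convention $\delta = 0$ gives $x = 0$, which matches the requirement $x = 0$ built into $U_0$. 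Finally, a direct check that $v_x^{-1}r_\g v_x = \left(\begin{smallmatrix}1 & x(1-a) & \\ & a & \\ & & b\end{smallmatrix}\right)$ is minimally integral (its $(1,1)$-entry is a unit and all other entries lie in $\oo$ by the above) confirms $l_A(\kappa_\g(P_\g v_xKZ)) = \ord_\pi a + \ord_\pi b = l_A([\g])$, so every such $v_x$ indeed lies in $\Delta_A([\g])$.

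The only delicate point is the bookkeeping between the two decompositions: one needs to check that different choices of $h \in C_G(r_\g)$ modulo $C_G(r_\g)\cap KZ$ and different $v_x$'s give distinct $KZ$-cosets in $G/KZ$, which follows from the uniqueness statement in the lemma of Theorem \ref{numberinaclass} applied with $W$ and the set $\{v_x : x \in \pi^{-\delta}\oo/\oo\} \subset W$. No new ideas are required beyond the lemmas already proved.
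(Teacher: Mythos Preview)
Your proposal is correct and follows essentially the same approach as the paper. The paper states that the corollary is ``contained in the proof above'' of Theorem \ref{type0cyclesinaclass}, where constraint (4) at $m=0$, $\ord_\pi a = 0$ forces $y=z=0$ and leaves $x \in \pi^{-\delta}\oo/\oo$; you reach the identical conclusion by going back to the $e_1=0$ criterion from Theorem \ref{minlength} and the inequalities (\ref{3m}) with $m=0$ from Theorem \ref{numberinaclass}, which is the same computation packaged slightly differently. Your final paragraph on distinctness of the parametrization is not actually required for the corollary as stated (it asserts only a set equality), so the main argument already suffices.
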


\section{Homotopy cycles in $[\g]$ for $\g$ rank-one split}

In this section we fix a rank-one split $\g \in [\G]$ whose
eigenvalues 
generate a quadratic
extension $L = F(\lambda)$ of $F$. Here $\lambda$ is a unit or
uniformizer in $L$ according as $L$ is unramified or ramified over
$F$, i.e., $\g$  is unramified or ramified rank-one split. Let
$r_\g = \left(
\begin{matrix} a &  &  \\ & e & dc \\ & d& e+db \end{matrix}\right)
$ be a rational form of $\g$ as in \S \ref{rationalform}. Fix a matrix $P_\g$ so that $P_\g^{-1}\g P_\g z_\g= r_\g$ for some $z_\g \in Z$.

\subsection{The centralizers of $r_\g$ for $\g$ rank-one split} \label{centralizers}

 Embed $L^\times$ in $\GL_2(F)$ as the subgroup
\begin{eqnarray}\label{imbeddingL}
\bigg\{\left(\begin{matrix} u & v c\\
v & u + vb \end{matrix}\right) ~|~ u, v \in F, ~\text{not both zero}
\bigg\},
\end{eqnarray} which is further imbedded in $\GL_3(F)$ as
$\bigg\{\left(\begin{matrix} 1 & & \\ &u & v c\\
& v & u + vb \end{matrix}\right)\bigg\}.$ 
Note
that $C_G(r_\g) = L^\times Z$.  Further $C_{P_\g^{-1} \G
P_\g}(r_\g)\backslash C_G(r_\g)/(C_G(r_\g)\cap KZ)$ has cardinality
$\vol([\g])$ by (\ref{fundamentaldomain'}).

 The group of units $\U_L$ of $L^\times$ is
contained in $K$. If $L$ is unramified over $F$, then $L^\times = \langle
\pi \rangle \U_L$ so that $C_G(r_\g)K/KZ $ is represented by the vertices
$\diag(\pi^n, 1, 1)KZ$, $n \in \mathbb Z$, on a line in $\B$, and
$C_{P_\g^{-1} \G P_\g}(r_\g)\backslash C_G(r_\g)/(C_G(r_\g)\cap KZ)$
 represented by  $\diag(\pi^n, 1, 1)KZ$, $n \mod
\vol([\g])$. If $L$ is ramified over $F$, then $L^\times = \langle \pi_L
\rangle \U_L$, where the uniformizer $\pi_L$ does not lie in $F$ and
$\pi_L^2$ differs from $\pi$ by a unit multiple. In this case
$C_G(r_\g) K/KZ $ is represented by the vertices $\diag(\pi^n, 1,
1)KZ$ and $\diag(\pi^n, 1, 1) \pi_L KZ$, $n \in \mathbb Z$, lying on
two lines in $\B$. There are two possibilities for $C_{P_\g^{-1} \G
P_\g}(r_\g)$:

Case (i). The vertices in $C_{P_\g^{-1} \G P_\g}(r_\g)KZ/KZ$ are
contained in the line $\diag(\pi^n, 1, 1)KZ$, $n \in \mathbb Z$. Then
$\vol([\g])$ is even so that $C_{P_\g^{-1} \G P_\g}(r_\g)\backslash
C_G(r_\g)/(C_G(r_\g)\cap KZ)$ is represented by the vertices
$\diag(\pi^n, 1, 1)KZ$ and $\diag(\pi^n, 1, 1) \pi_L KZ$, $n \mod
\vol([\g])/2$.

Case (ii). $C_{P_\g^{-1} \G P_\g}(r_\g)KZ/KZ$ contains a vertex on the
line $\diag(\pi^n, 1, 1) \pi_L KZ$, $n \in \mathbb Z$. Let $y \in
C_{P_\g^{-1} \G P_\g}(r_\g)$ be such that $yKZ = \diag(\pi^N, 1, 1)
\pi_L KZ$ has the least non-negative $N$. Then $y$ generates the
group $C_{P_\g^{-1} \G P_\g}(r_\g)$, $y^2KZ = \diag(\pi^{2N-1}, 1,
1)KZ$, $\vol([\g])= 2N-1$ is odd, and $C_{P_\g^{-1} \G
P_\g}(r_\g)\backslash C_G(r_\g)/(C_G(r_\g)\cap KZ)$ is represented by
the vertices $\diag(\pi^n, 1, 1)KZ$, $0 \le n \le N-1 =
(\vol([\g])-1)/2$, and $\diag(\pi^n, 1, 1)\pi_L KZ$, $0 \le n \le N-2 =
(\vol([\g])-3)/2$.

\subsection{Double coset representatives of $C_G(r_\g)\backslash G
/KZ$}

\begin{proposition}\label{rankonerep} The double cosets in $C_G(r_\g)\backslash G /KZ$ are represented by elements in
$$ S = \bigg\{ \left(\begin{matrix}1 & x & y \\ & 1 & 0\\ & & \pi^n
\end{matrix} \right) ~|~ x, y \in F/\oo, n \ge 0 \bigg\}.$$
\end{proposition}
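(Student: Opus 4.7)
The plan is to prove $G = L^\times Z \cdot S \cdot KZ$, which is equivalent to the proposition. Start with the Iwasawa decomposition $G = BK$: writing $g = bk$ with $b$ upper triangular and $k \in K$, we may replace $g$ by $b$. Taking $v = 0$ in the embedding (\ref{imbeddingL}) shows $\diag(1, u, u) \in L^\times$ for every $u \in F^\times$, so $L^\times Z$ contains every diagonal matrix of the form $\diag(\alpha_1, \alpha_2, \alpha_2)$ with $\alpha_1, \alpha_2 \in F^\times$. Left-multiplying $b$ by an appropriate such diagonal normalizes the $(1,1)$ and $(3,3)$ entries to $1$, so we may assume
\[
b = \left(\begin{smallmatrix} 1 & p & q \\ 0 & a & r \\ 0 & 0 & 1 \end{smallmatrix}\right)
\]
for some $p, q, r \in F$ and $a \in F^\times$.

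Next, apply $\diag(1, M_2')$ on the left with $M_2' \in L^\times \subset \GL_2(F)$ and $\diag(1, k_2)$ on the right with $k_2 \in \GL_2(\oo)$. These leave the first column of $b$ untouched and transform the bottom-right $2 \times 2$ block $A := \left(\begin{smallmatrix} a & r \\ 0 & 1 \end{smallmatrix}\right)$ into $M_2' A k_2$. The key sub-claim is
\[
\GL_2(F) = \bigcup_{n \ge 0} L^\times \cdot \diag(1, \pi^n) \cdot \GL_2(\oo).
\]
Since $F^\times \subset L^\times$, this is equivalent to saying that the $L^\times/F^\times$-orbits on the vertices of the Bruhat--Tits tree $\B_2$ of $\PGL_2(F)$ are represented by $\diag(1, \pi^n) \cdot [\oo^2]$ for $n \ge 0$. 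In the unramified case $L^\times$ fixes the vertex $[\oo^2] = [\mathcal O_L]$, and the image of $\mathcal O_L^\times / \oo^\times$ in $\PGL_2(\mathbb F_q)$ is a non-split Cartan acting regularly on the $q+1$ neighbors of this fixed vertex; combined with the action of higher congruence units, this gives transitivity on each distance-$n$ sphere. In the ramified case $L^\times$ stabilizes the edge between $[\oo^2]$ and $[\pi_L \oo^2]$ with $\pi_L$ swapping endpoints, and a parallel argument yields one orbit per level measured from the fixed edge. In both cases $\diag(1, \pi^n) [\oo^2]$ represents the level-$n$ orbit, consistent with the explicit description of $C_G(r_\gamma) K/KZ$ recorded in \S8.1.

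Invoking the sub-claim, $b$ becomes $\left(\begin{smallmatrix} 1 & p' & q' \\ 0 & \alpha & 0 \\ 0 & 0 & \alpha \pi^n \end{smallmatrix}\right)$ for some $\alpha \in F^\times$, $n \ge 0$, and $p', q' \in F$. Left-multiplication by $\diag(1, \alpha^{-1}, \alpha^{-1}) \in L^\times Z$ clears $\alpha$ without affecting the top row, producing $\left(\begin{smallmatrix} 1 & p' & q' \\ 0 & 1 & 0 \\ 0 & 0 & \pi^n \end{smallmatrix}\right)$; right-multiplication by $\left(\begin{smallmatrix} 1 & \beta & \gamma \\ 0 & 1 & 0 \\ 0 & 0 & 1 \end{smallmatrix}\right) \in K$ with $\beta, \gamma \in \oo$ then lets us reduce $p', q'$ modulo $\oo$, yielding an element of $S$. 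The main obstacle is the transitivity portion of the sub-claim at higher distances in $\B_2$, which requires a careful study of how the higher congruence subgroups of $\mathcal O_L^\times$ act on successive spheres in the tree, supplementing the regularity of the non-split Cartan at the residue level.
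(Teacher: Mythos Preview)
Your reduction is the same as the paper's: use Iwasawa to reach an upper-triangular representative, normalize the diagonal using $L^\times Z$, and reduce everything to the $\GL_2$ statement
\[
\GL_2(F)=\bigcup_{n\ge 0} L^\times \left(\begin{smallmatrix}1&\\&\pi^n\end{smallmatrix}\right)\GL_2(\oo).
\]
The difference is in how this sub-claim is established. You argue geometrically: $L^\times/F^\times$ acts on the Bruhat--Tits tree of $\PGL_2(F)$ fixing a vertex (unramified case) or an edge (ramified case), and the orbits are exactly the spheres about that fixed locus, with $\diag(1,\pi^n)[\oo^2]$ landing one in each. This is correct, though the transitivity on each sphere---which you rightly flag as the main obstacle---needs an actual argument, e.g.\ an orbit--stabilizer count showing $[\mathcal O_L^\times:\oo^\times(1+\pi^n\mathcal O_L)]=(q+1)q^{n-1}$ matches the sphere size in the unramified case, with the analogous check from the fixed edge in the ramified case. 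The paper instead does a direct, fully explicit matrix computation: it first verifies that distinct $n$ give distinct double cosets, then reduces an arbitrary $\left(\begin{smallmatrix}1&z\\0&\pi^m\end{smallmatrix}\right)$ to diagonal form by exhibiting concrete elements of $L^\times$ in two cases split by whether $\ord_\pi z\ge m+\ord_\pi c$. Your route is more conceptual and ties into the general picture of anisotropic tori acting on buildings; the paper's is elementary, self-contained, and also records the disjointness of the union, which your geometric picture gives implicitly (distinct radii give distinct orbits) but which you do not state.
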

\begin{proof}
Write an element $g \in G$ as $wk$ for some upper triangular $w$ and
some $k \in K$. Since $C_G(r_\g) = L^\times Z$, modulo the center $Z$, we may assume that $w =
\left(\begin{matrix} 1 & x & y
\\ & 1 & z \\ &  & \pi^n
\end{matrix}\right)$, where $x, y, z \in F /\oo$ and $n \in \mathbb
Z$. We are reduced to proving
\begin{eqnarray}\label{Ldoublecoset}
 \GL_2(F) = \coprod_{n \ge 0} L^\times \left(\begin{matrix} 1 &
\\ & \pi^n \end{matrix} \right) GL_2(\oo),
\end{eqnarray}
where $L^\times$ is given by (\ref{imbeddingL}) (cf. \cite{Fl}, Lemma 1 on p.30).

 First we check the disjoint union. Suppose otherwise. Then
there exist $m \ne n$ and $g$ satisfying
$$g \in L^\times \cap \left(\begin{matrix} 1 & \\ & \pi^m
\end{matrix}\right) \GL_2(\oo)\left(\begin{matrix} 1 & \\ & \pi^{-n}
\end{matrix}\right).$$ Replacing $g$ by its inverse if necessary, we may assume $m >
n$. Write $g = \left(\begin{matrix} x & y \pi^{-n}\\\pi^m z&
w\pi^{m-n}\end{matrix}\right) = \left(\begin{matrix} u & vc\\
v & u+vb \end{matrix}\right)$ for some $\left(\begin{matrix} x &
y\\z & w \end{matrix}\right) \in \GL_2(\oo)$ and $u, v \in F$.
Comparing entries, we find $y \pi^{-n} = c z \pi^{m}$ and $w
\pi^{m-n} = x + zb \pi^m$. Since $x, y, z, w, b, c$ are all
integral, we conclude that $x$ is a nonunit and hence $z$ and $y$
should both be units, but then $y \pi^{-n} = c z \pi^{m}$ cannot
hold by checking the order of both sides.

Next we prove equality. Let $w = \left(\begin{matrix} 1 & z\\ &
\pi^{m}
\end{matrix}\right) \in \GL_2(F)$. Observe that for $m
\ge 0$,  $$\left(\begin{matrix} 0 & c\\ 1& b
\end{matrix}\right) \left(\begin{matrix} 1 & 0\\ 0& \pi^m
\end{matrix}\right) = \left(\begin{matrix} 0 & c\pi^m\\ 1& b\pi^m
\end{matrix}\right) = \left(\begin{matrix} c \pi^m & 0\\ 0& 1
\end{matrix}\right)\left(\begin{matrix} 0 & 1\\ 1&
b\pi^{m}
\end{matrix}\right),$$ showing that $\left(\begin{matrix} 1 & 0\\ 0& \pi^m
\end{matrix}\right)$ and $\left(\begin{matrix} 1 & 0\\ 0&
\pi^{-m-\ord_{\pi} c}
\end{matrix}\right)$ represent the same double coset. Since
$\ord_{\pi} c = 0$ or $1$, only such diagonal matrices with $m \ge
0$ are needed as double coset representatives. Thus we assume
$\ord_{\pi} z < 0$. It suffices to reduce $w$ to a diagonal matrix
via left multiplication by elements in $L^\times$ and right
multiplication by elements in $\GL_2(\oo)$.

Case (I). $0 > \ord_{\pi} z \ge m + \ord_{\pi} c$. Choose $v \in
\oo$ with $\ord_{\pi} v + m + \ord_{\pi} c = \ord_{\pi} z$ and $u$ a
unit in $\oo$ satisfying $uz = - c v \pi^m$. Then
$\left(\begin{matrix} u & vc\\ v& u+ vb
\end{matrix}\right)w = \left(\begin{matrix} u & 0\\ v& vz+(u+ vb)\pi^m
\end{matrix}\right) = \left(\begin{matrix} 1 & 0\\ 0& \pi^m
\end{matrix}\right)k$ for some $k \in \GL_2(\oo)$. Here we used the
fact that $u(u + vb) - v^2c$ is a unit. It is obvious if $v$ or
$c$ (and hence $b$) is not a unit; when $v$ and $c$ are both
units, this results from the irreducibility of $x^2 - bx - c$.

Case (II). $m + \ord_{\pi} c > \ord_{\pi} z$. Choose $u \in \oo$
with $\ord_{\pi} u + \ord_{\pi} z = m + \ord_{\pi} c$ and $v$ a unit
such that $uz = -v c \pi^m$. Then $\left(\begin{matrix} u & vc\\ v&
u + vb
\end{matrix}\right)w = \left(\begin{matrix} u & 0\\ v& vz+(u+vb) \pi^m
\end{matrix}\right) = \left(\begin{matrix} u & 0\\ 0& z
\end{matrix}\right)k$ for some $k \in \GL_2(\oo)$.

In both cases we have shown that $w$ lies in the right hand side of
(\ref{Ldoublecoset}), therefore (\ref{Ldoublecoset}) holds. This
proves the proposition.
\end{proof}

\subsection{Minimal lengths of cycles in $[\g]$}\label{minimallengthrankone}

The type of $[\g]$, as defined in \S \ref{typeandlengthofclass}, is
$(n,m)$ such that $r_\g \in T_{n,m} = K \diag(1, \pi^m,
\pi^{n+m})KZ$. Observe that $\ord_{\pi}$$ \det \g \equiv \ord_{\pi}$$ \det
r_\g \equiv \ord_{\pi}$$ a(e+d\lambda)(e+d \bar{\lambda}) \equiv 0  \mod 3$
by the assumption on $\G$. Hence if $e+d\lambda$ is a unit in $L$,
then at least one of $e, d$ is a unit and $a$ is not a unit.
Consequently, $[\g]$ has type $(\ord_{\pi}$$ a, 0)$. Next assume
$e+d\lambda$ is not a unit. We distinguish two cases. If $L$ is
unramified over $F$ (hence $\lambda$ is a unit), then both $e$ and
$d$ are non-units and $a$ is a unit; in this case $[\g]$ has type
$(0, \min(\ord_{\pi}$$ e, \ord_{\pi}$$ d))$. If $L$ is ramified over $F$
(hence $\lambda$ is a uniformizer of $L$), then there are two
possibilities:

  (i) $\ord_{\pi}$$ (e+d\lambda)(e+d \bar{\lambda}) = 1$. This happens
if and only if $e$ is a non-unit, $d$ is a unit, and $\ord_{\pi}$$ a
\ge 2$; in this case
 $[\g]$ has type $(\ord_{\pi}$$ a - 1, 1)$.

 (ii) $\ord_{\pi}$$ (e+d\lambda)(e+d
\bar{\lambda}) > 1$. This happens if and only if both $e$ and $d$
are non-units and $a$ is a unit; in this case $[\g]$ has type $(0,
\ord_{\pi}$$ e)$ if $\ord_{\pi}$$ e \le \ord_{\pi}$$ d$, and type $(1,
\ord_{\pi}$$ d)$ if $\ord_{\pi}$$ e > \ord_{\pi}$$ d$.

This proves the first assertion of

\begin{theorem}\label{rankoneminlength}
Let $\g$ be a rank-one split element in $[\G]$ with rational form $r_\g = \left(
\begin{matrix} a &  &  \\ & e & dc \\ & d& e+db \end{matrix}\right) $.
Suppose that $r_\g \in K \diag(1, \pi^m, \pi^{m+n})KZ$. Then
\begin{itemize}
\item[(1)] The type $(n,~m)$ of $[\g]$ is as follows.
\begin{itemize}
\item[(1.i)] If $\ord_{\pi}$$ c = 0$, then $(n, m) = (\ord_{\pi}$$ a,
~\min\{\ord_{\pi}$$ e, ~\ord_{\pi}$$ d\})$.

\item[(1.ii)] If $\ord_{\pi}$$ c = 1$, then $(n, m) = (\ord_{\pi}$$ a,
~\ord_{\pi}$$ e)$ provided that $\ord_{\pi}$$ e \le \ord_{\pi}$$ d$,
otherwise $(n, m) = (\max\{\ord_{\pi}$$ a -1, ~1\}, ~\max\{\ord_{\pi}$$
d, ~1 \})$.
\end{itemize}

\item[(2)] $l_A([\g])= \min_{\kappa_\g(gKZ) \in [\g]}
l_A(\kappa_\g(gKZ)) = \ord_{\pi}$$ a(e^2 +edb - cd^2) = n+2m$.

 \item[(3)] $l_G([\g])=
  \min_{\kappa_\g(gKZ) \in
[\g]} l_G(\kappa_\g(gKZ)) = n+m$.
\end{itemize}
\end{theorem}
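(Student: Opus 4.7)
Part (1) is immediate from the case analysis in the paragraph preceding the theorem statement, which reads off the type $(n,m)$ from the orders of the eigenvalues $a$, $e+d\lambda$, $e+d\bar\lambda$. The plan focuses on parts (2) and (3), which will be proved in parallel by a direct analysis of the Smith elementary divisors of conjugates of $r_\gamma$, mirroring the proof of Theorem \ref{minlength} with Proposition \ref{rankonerep} in place of the Iwasawa decomposition.

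Each double coset in $C_G(r_\gamma)\backslash G /KZ$ is represented by $g = \begin{pmatrix} 1 & x & y \\ & 1 & 0 \\ & & \pi^n \end{pmatrix}$ with $x, y \in F/\oo$ and $n \ge 0$. First I would compute
$$g^{-1} r_\gamma g = \begin{pmatrix} a & x(a-e) - yd\pi^{-n} & y(a-e-db) - xdc\pi^n \\ 0 & e & dc\pi^n \\ 0 & d\pi^{-n} & e+db \end{pmatrix}$$
and let $e_1 \le e_2 \le e_3$ denote its elementary divisors, so that $l_A(\kappa_\gamma(P_\gamma g KZ)) = e_3+e_2-2e_1$, $l_G(\kappa_\gamma(P_\gamma g KZ)) = e_3 - e_1$, while $e_1 + e_2 + e_3 = \ord_\pi \det r_\gamma = \ord_\pi a(e^2+edb-d^2c)$ is independent of $g$. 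Hence minimising $l_A$ is equivalent to maximising $e_1$, and minimising $l_G$ is equivalent to maximising both $e_1$ and $e_1 + e_2$.

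Next, I would verify case by case that $r_\gamma$ is itself minimally integral in $M_3(\oo)$ in every subcase of (1), i.e.\ at least one of $a, e, d$ is a unit, giving $e_1(r_\gamma) = 0$ and $l_A(r_\gamma) = \ord_\pi a(e^2+edb-d^2c)$, which matches $n + 2m$ with $(n,m)$ as in (1). Taking $g = I$ realises this value, so what remains is to prove the pair of inequalities
$$ e_1(g^{-1} r_\gamma g) \le e_1(r_\gamma), \qquad e_1(g^{-1} r_\gamma g) + e_2(g^{-1} r_\gamma g) \le e_1(r_\gamma) + e_2(r_\gamma) $$
for every representative $g$. For the first: the diagonal entries $a, e, e+db$ of $r_\gamma$ appear unchanged in positions $(1,1), (2,2), (3,3)$ of $g^{-1} r_\gamma g$, while the off-diagonals $d, dc$ become $d\pi^{-n}, dc\pi^n$; since $n \ge 0$ and $\ord_\pi c \le 1$, the estimate $\ord_\pi(d\pi^{-n}) = \ord_\pi d - n \le \ord_\pi d \le \ord_\pi(dc)$ shows that whichever entry of $r_\gamma$ realises $e_1(r_\gamma) = 0$ is matched by an entry of $g^{-1} r_\gamma g$ of no larger $\pi$-order. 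The second inequality follows analogously from the nonzero $2 \times 2$ minors of $r_\gamma$, namely $ae, adc, ad, a(e+db)$ and $e^2+edb-d^2c$: three of them appear unchanged in $g^{-1}r_\gamma g$, while $adc, ad$ become $adc\pi^n, ad\pi^{-n}$, and the estimate $\ord_\pi(ad\pi^{-n}) = \ord_\pi(ad) - n \le \ord_\pi(adc)$ covers the remaining case.

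The main obstacle is the case-by-case verification that $r_\gamma$ is minimally integral in every subcase of (1); this uses both the classification of rank-one split elements by $\ord_\pi c$ and $\ord_\pi(e+d\lambda)(e+d\bar\lambda)$, and the arithmetic constraint $\ord_\pi \det r_\gamma \equiv 0 \pmod 3$ forced by the hypothesis on $\G$, which rules out configurations where all entries of $r_\gamma$ would be non-units. Once this is established, the remaining bookkeeping with entries and minors is routine valuation arithmetic.
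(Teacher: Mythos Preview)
Your approach is correct and essentially identical to the paper's: both use the coset representatives from Proposition \ref{rankonerep}, compute $g^{-1}r_\gamma g$ explicitly, and bound its elementary divisors $e_1$ and $e_1+e_2$ by exhibiting specific entries and $2\times 2$ minors. Two minor points: your use of $n$ for the exponent in the representative $g$ clashes with the type parameter $n$ already fixed in the theorem statement (the paper uses $i$ instead), and what you flag as the ``main obstacle'' --- that at least one of $a,e,d$ is a unit --- is in fact built into the definition of the rational form in \S\ref{rationalform}, so no separate case-by-case verification is required.
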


This theorem combined with Theorem \ref{minlength} completes the
proof of Theorem \ref{charlAandlG}.

\begin{remark} If $\g$ is ramified rank-one split and $[\g]$ has
type $(n, 1)$, then $[\g^2]$ has type $(2n+1, 0)$.
\end{remark}

\begin{proof} It remains to show that the algebraic and
geometric lengths of the cycles in $[\g]$ are at least those of
$[\g]$ since, as observed before, the cycles $\kappa_\g(P_\g gKZ)$
with $g \in C_G(r_\g)$
 have the same algebraic
and geometric lengths as $[\g]$. By Proposition \ref{rankonerep}, it
suffices to compute $(P_\g g)^{-1} \g P_\g g z_\g = g^{-1}r_\g g$ for $g
\in S$. Let $g = \left(\begin{matrix}1 & x & y
\\ & 1 & 0\\ & & \pi^i
\end{matrix} \right)$, where $x, y \in F/\oo$ and $i \ge 0$. Then

$$g^{-1}r_\g g
= \left(\begin{matrix} 1 & -x & -y\pi^{-i} \\ & 1 & 0 \\
 & & \pi^{-i}
\end{matrix} \right)
\left(
\begin{matrix} a &  &  \\ & e & dc \\ & d& e+db \end{matrix}\right)
\left(\begin{matrix}1 & x & y \\ & 1 & 0\\ & & \pi^i
\end{matrix} \right)$$
$$= \left(\begin{matrix}a & (a-e)x-dy\pi^{-i} & (a-e-db)y - cdx\pi^i \\ & e &
dc\pi^i\\ & d\pi^{-i} & e+db
\end{matrix} \right) \in K \left(\begin{matrix}\pi^{e_1} &  &  \\ & \pi^{e_2} & \\ & &
\pi^{e_3}
\end{matrix} \right)K.$$
Here $e_1 \le e_2 \le e_3$, and as in the proof of Theorem
\ref{minlength}, we have
\begin{eqnarray}\label{e1'}
e_1 \le \min \{\ord_{\pi} a, -i + \ord_{\pi} d, \ord_{\pi} e\} \le
\min\{\ord_{\pi} a, \ord_{\pi} d, \ord_{\pi} e \} = 0,
\end{eqnarray}
\begin{equation}\label{e1+e2'}
\begin{aligned}
e_1+e_2 &\le& \min \{\ord_{\pi} a e, ~ -i + \ord_{\pi} ad,
~\ord_{\pi}(e^2+bed-cd^2)\} \\
&\le& \min \{\ord_{\pi} a e, ~\ord_{\pi} ad,
~\ord_{\pi}(e^2+bed-cd^2)\} = m,
\end{aligned}
\end{equation} and
\begin{eqnarray}\label{e1+e2+e3'}
e_1+e_2+e_3 = \ord_{\pi} a(e^2+bed-cd^2)= n+2m,
\end{eqnarray}
in which the last upper bound for $e_1+e_2$ can be verified using
the statement (1). Therefore $l_A(\kappa_\g(P_\g gKZ)) = e_1+e_2+e_3
- 3 e_1 \ge e_1+e_2+e_3 = n+2m = l_A([\g])$ since $e_1 \le 0$. The
inequalities (\ref{e1+e2'}) and (\ref{e1+e2+e3'}) together give the
lower bound $e_3 \ge n+2m - m = n+m$, which in turn implies
$l_G(\kappa_\g(P_\g gKZ)) = e_3-e_1 \ge n+m$. This proves the
theorem.
\end{proof}

 As shown in the proof above, if $[\g]$ has type $1$, i.e. $m=0$, then an algebraically minimal
cycle in $[\g]$ satisfies $e_1 = 0$, which implies $e_1 + e_2 \ge 0$
and hence $e_1 + e_2 = 0 $ by (\ref{e1+e2'}) and $e_3 = n$ by (\ref{e1+e2+e3'}). This proves

\begin{corollary}\label{rankonetailless}
Suppose $\g \in [\G]$ is rank-one split. 
If $[\g]$ has type $1$, then the algebraically
minimal cycles in $[\g]$ coincide with the geometrically minimal (hence tailless) cycles in $[\g]$.
\end{corollary}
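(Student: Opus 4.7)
The plan is to reduce the corollary to an equality-case analysis of the length inequalities already established in the proof of Theorem \ref{rankoneminlength}. One inclusion is free: Proposition \ref{geometricalminimalequaltothesametype} already guarantees that every geometrically minimal cycle in $[\g]$ is algebraically minimal. So the substantive task is the reverse, namely to show that when $[\g]$ has type $1$ (i.e.\ $m=0$), every algebraically minimal cycle in $[\g]$ is also geometrically minimal. Once that is in hand, the parenthetical ``hence tailless'' will fall out of Proposition \ref{taillesscriterion}, which identifies geometrically minimal type-$1$ cycles with tailless ones.

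To carry this out I would parametrize cycles in $[\g]$ using Proposition \ref{rankonerep}, so each homotopy class has a representative of the form $\kappa_\g(P_\g gKZ)$ with $g = \left( \begin{smallmatrix} 1 & x & y \\ & 1 & 0 \\ & & \pi^i \end{smallmatrix} \right)$, where $x, y \in F/\oo$ and $i \ge 0$. Writing $g^{-1} r_\g g \in K\diag(\pi^{e_1}, \pi^{e_2}, \pi^{e_3}) K$ with $e_1 \le e_2 \le e_3$, the proof of Theorem \ref{rankoneminlength} has already supplied the three bounds $e_1 \le 0$, $e_1 + e_2 \le m$, and $e_1 + e_2 + e_3 = n + 2m$, together with the length expressions $l_A(\kappa_\g(P_\g gKZ)) = (n+2m) - 3e_1$ and $l_G(\kappa_\g(P_\g gKZ)) = e_3 - e_1$. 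Specializing to $m = 0$, the condition $l_A = n$ of algebraic minimality forces $e_1 = 0$; the ordering $e_1 \le e_2$ combined with the bound $e_1 + e_2 \le 0$ then pins $e_2 = 0$, whence $e_3 = n$ by the third relation. The geometric length becomes $e_3 - e_1 = n = l_G([\g])$, and the cycle is geometrically minimal, as desired.

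I do not expect any serious obstacle: all of the delicate work has already been done in the proof of Theorem \ref{rankoneminlength}, and this corollary is really just the equality case of those bounds when $m = 0$. The only point that deserves a moment's care is verifying that the bound $e_1 + e_2 \le 0$ together with $e_1 = 0 \le e_2$ really does pin $e_2 = 0$; this is immediate from the ordering of the $e_i$'s, so the plan should go through without incident.
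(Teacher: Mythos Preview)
Your proposal is correct and follows essentially the same route as the paper: both extract from the proof of Theorem \ref{rankoneminlength} that algebraic minimality forces $e_1=0$, then use $e_1\le e_2$ together with the bound $e_1+e_2\le m=0$ to pin $e_2=0$ and hence $e_3=n$, yielding geometric minimality. The only cosmetic difference is that you spell out the reverse inclusion via Proposition \ref{geometricalminimalequaltothesametype} and the tailless identification via Proposition \ref{taillesscriterion}, whereas the paper leaves these implicit.
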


\subsection{Counting the number of cycles in $[\g]$ in algebraic
length}\label{number-rankone}

As observed before, given $s \in S$, the cycles $\kappa_\g(P_\g gKZ)$ have
the same algebraic length for all $gKZ \in 
C_G(r_\g)sK/KZ$. Since $S$ represents
the double coset $C_G(r_\g) \backslash G/KZ$, to count the number
of cycles in $[\g]$ of a given length,
we need to determine the cardinality of
$C_{P_\g^{-1} \G
P_\g}(r_\g) \backslash C_G(r_\g)sK/KZ$ for $s \in S$.  For this, we may take
as representatives the product of representatives of $C_{P_\g^{-1}
\G P_\g}(r_\g) \backslash C_G(r_\g)/(C_G(r_\g)\cap KZ)$ (independent
of $s$) by the representatives of $(C_G(r_\g)\cap KZ)sK/KZ$.
The number of the former representatives is $\vol([\g])$ by
(\ref{fundamentaldomain'}).

It remains
to compute the cardinality of the latter. Recall that $L^\times
\cap K$ consists of the units in $L^\times$, which are identified with
 the matrices
\begin{eqnarray*}
\U_L = \bigg\{\left(\begin{matrix} u & v c\\
v & u + v b \end{matrix}\right) ~|~ u, v \in \oo, u^2 + uvb - cv^2
~\text{is a unit} \bigg\}.
\end{eqnarray*}
Denote by $K'$ the group $GL_2(\oo)$. As analyzed in the proof of
Proposition \ref{rankonerep}, we are reduced to counting, for
each $m \ge 0$, the cardinality of $\U_L \left(\begin{matrix} 1 &
\\  & \pi^m \end{matrix}\right)K'/K'$.

\begin{proposition}\label{Cgammadoublecosets}
$$\#[\U_L
\left(\begin{matrix} 1 &
\\  & \pi^m \end{matrix}\right)K'/K'] =
\left\{
\begin{array}{lcl}
1&& {\,when\,}~m=0,\\
q^m && {\,when\,}~m \ge 1 ~{\, and \,}~\ord_\pi c=1, \\
q^m+q^{m-1}&& {\,when\,}~m \ge 1 ~{\,and\,}~\ord_\pi c=0. \\
\end{array}
\right.
$$
\end{proposition}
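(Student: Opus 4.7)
The plan is to identify $\U_L \cdot g K'/K'$ as a homogeneous space for $\U_L$ and count cosets via the stabilizer. Since $K'$ is the $\GL_2(F)$-stabilizer of $\oo^2 \subset F^2$, the coset $gK'$ with $g = \diag(1,\pi^m)$ corresponds to the $\oo$-lattice $R_m := \oo + \pi^m \oo \lambda \subset L$, where $F^2$ is identified with $L = F(\lambda)$ via the basis $\{1,\lambda\}$. A short check using the minimal polynomial $\lambda^2 = b\lambda + c$ shows that $R_m$ is in fact a subring (hence an order) of $\mathcal{O}_L$, and its stabilizer in $L^\times$ under left multiplication is exactly the unit group $R_m^\times$: any $\xi \in L^\times$ with $\xi R_m = R_m$ must satisfy $\xi, \xi^{-1} \in R_m$. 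Thus $|\U_L \cdot g K'/K'| = [\mathcal{O}_L^\times : R_m^\times]$, reducing the proposition to an index computation of unit groups of orders in $L$.

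The case $m = 0$ gives $R_0 = \mathcal{O}_L$ and index $1$. For $m \geq 1$, I would describe $R_m^\times$ explicitly: for $v \in \pi^m\oo$ the norm $u^2 + uvb - cv^2$ is congruent to $u^2$ modulo $\pi$, so $u + v\lambda \in R_m^\times$ iff $u \in \oo^\times$. In particular $1 + \pi^m \mathcal{O}_L \subseteq R_m^\times$, and I would compute $[\mathcal{O}_L^\times : R_m^\times]$ using the intermediate identity
\[ [\mathcal{O}_L^\times : R_m^\times] = \frac{|\mathcal{O}_L^\times/(1 + \pi^m\mathcal{O}_L)|}{|R_m^\times/(1+\pi^m\mathcal{O}_L)|}. \]

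The numerator equals $|(\mathcal{O}_L/\pi^m\mathcal{O}_L)^\times|$, a standard residue-field count: $q^{2m-2}(q^2-1)$ when $L/F$ is unramified (residue field $\mathbb{F}_{q^2}$, and $\pi$ a uniformizer of $\mathcal O_L$) and $q^{2m-1}(q-1)$ when $L/F$ is ramified (residue field $\mathbb{F}_q$, and $\pi$ a unit multiple of $\lambda^2$). The denominator is parametrized by $u \in (\oo/\pi^m\oo)^\times$, since two elements of $R_m^\times$ are congruent modulo $1 + \pi^m\mathcal{O}_L$ iff their constant and $\lambda$-coefficients agree modulo $\pi^m$, and the latter is automatic because both lie in $\pi^m\oo$. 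Hence the denominator is $q^{m-1}(q-1)$ regardless of ramification, and the ratio yields $q^m + q^{m-1}$ in the unramified case and $q^m$ in the ramified case, matching the claim.

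The main conceptual step I expect will require care is identifying the stabilizer as precisely the unit group of the order $R_m$ (rather than a proper overgroup), and verifying the containment $1 + \pi^m\mathcal{O}_L \subseteq R_m^\times$; once these structural facts are in place, the remaining index calculation is a routine exercise in finite local-ring arithmetic.
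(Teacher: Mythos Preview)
Your argument is correct, and it takes a genuinely different route from the paper. The paper proceeds by brute force: it partitions $\U_L$ into pieces according to the $\pi$-adic valuations of the coordinates $u$ and $v$ in $u+v\lambda$, and for each piece it writes down, by explicit matrix multiplication, exactly which $K'$-cosets arise from left-translating $\diag(1,\pi^m)K'$. Summing over the pieces gives the count. Your approach is structural: you recognize the orbit as $\mathcal O_L^\times/R_m^\times$ for the order $R_m=\oo+\pi^m\oo\lambda$, and compute the index via the filtration by $1+\pi^m\mathcal O_L$. This is cleaner for the bare cardinality, and the verification that the stabilizer is exactly $R_m^\times$ (because $R_m$ is a ring containing $1$) is the right conceptual point. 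One small remark: you are implicitly using $\U_L=\mathcal O_L^\times$, i.e.\ $\oo[\lambda]=\mathcal O_L$; the paper makes the same identification (``the group of units $\U_L$ of $L^\times$ is contained in $K$''), and it follows from the choice of $\lambda$ as a uniformizer (ramified case) or as a unit whose minimal polynomial stays irreducible modulo $\pi$ (unramified case). The trade-off is that the paper's explicit calculation produces a concrete list of coset representatives $g_{i,j,u}$ and $g_{i,z}$, which are reused in Proposition~\ref{rankonenumberofalgtailless} to describe $\Delta_A([\g])$; your argument gives the count but not these representatives, so if you were to continue into \S8.4 you would still need to exhibit them separately.
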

\begin{proof} It is clear that the cardinality is $1$ when $m =
0$. Thus assume $m \ge 1$.

Case (I) $\ord_\pi$$ c=1 $. Then any $\left(\begin{matrix} u & v c\\
v & u + v b \end{matrix}\right) \in \U_L$ satisfies $u \in
\oo^\times$. For $n \ge 0$, let
$$\U_L(n) = \bigg\{\left(
\begin{matrix} u &  vc\pi^n \\  v\pi^n & u +vb\pi^n \end{matrix}\right) \in \U_L \bigg| u,v \in \oo^\times \bigg\}$$
so that
$$\U_L = \U_L(\infty) \cup_{n \ge 0} \U_L(n),$$
where
$$\U_L(\infty) = \bigg\{\left(\begin{matrix} u & 0\\
0 & u \end{matrix}\right) ~|~u \in \oo^\times \bigg\}.$$
 One verifies that
$$\U_L(n)\left(
\begin{matrix} 1 &   \\  & \pi^m \end{matrix}\right)K' = \bigcup_{u
\in \oo^\times/\pi^{m-n}\oo}\left( \begin{matrix} \pi^{m-n} & u  \\
& \pi^n
\end{matrix}\right)K'$$
for $0 \le n < m$, and
$$\U_L(n)\left(
\begin{matrix} 1 &   \\  & \pi^m \end{matrix}\right)K' = \left(
\begin{matrix} 1 &   \\  & \pi^m \end{matrix}\right)K'$$
for $n \ge m$ and $n = \infty$. Therefore
$$ \#[\U_L
\left(\begin{matrix} 1 &
\\  & \pi^m \end{matrix}\right)K'/K'] = 1 + \sum_{0 \le n < m}(q^{m-n}-q^{m-n-1})=q^m.$$

Case (II) $\ord_\pi$$ c=0 $. Let
$$\U_L' = \bigg\{\left(
\begin{matrix} u &  vc \\  v & u + vb \end{matrix}\right) \in \U_L \bigg| u \in \oo^\times
\bigg\}$$ and $$\U_L'' =  \bigg\{\left(
\begin{matrix} u  &  vc \\  v & u +vb \end{matrix}\right) \in \U_L \bigg| u \in \pi\oo \bigg\}
$$
so that
$$\U_L = \U_L' \cup \U_L''.$$
As in Case (I), we have
$$ \U_L'\left(
\begin{matrix} 1 &   \\  & \pi^m \end{matrix}\right)K' =
\bigcup_{\substack{ m\geq n\geq 0 \\u \in \oo^\times/\pi^{m-n}\oo}} \left( \begin{matrix} \pi^{m-n} & u  \\
& \pi^n
\end{matrix}\right)K'.$$
One checks that
$$ \U_L''\left(
\begin{matrix} 1 &   \\  & \pi^m \end{matrix}\right)K' =
\bigcup_{z \in \pi\oo/\pi^m \oo} \left( \begin{matrix} \pi^{m} & z  \\
& 1
\end{matrix}\right)K'.
$$
Therefore $$ \#[\U_L \left(\begin{matrix} 1 &
\\  & \pi^m \end{matrix}\right)K'/K']= q^m+q^{m-1}$$
for $m \ge 1$.
\end{proof}

We summarize the above discussion in

\begin{corollary}\label{rankonedoublecosets}
For each $ s = \left(\begin{matrix}1 & x & y \\ & 1 & 0\\ & &
\pi^n
\end{matrix} \right) \in S$, the cardinality of
$C_{P_\g^{-1} \G P_\g}(r_\g) \backslash C_G(r_\g)sK/KZ$ is
$$
\vol([\g])\left\{
\begin{array}{lcl}
1&& {\,when\,}~n=0,\\
q^n && {\,when\,}~n \ge 1 ~\mbox{and $\g$ is ramified rank-one split}, \\ 
q^n+q^{n-1}&& {\,when\,}~n \ge 1 ~\mbox{and $\g$ is unramified rank-one split}.\\
\end{array}
\right.
$$
\end{corollary}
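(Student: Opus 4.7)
The plan is to combine the product structure of double-coset representatives described in the paragraph preceding the statement with the $\GL_2$ computation of Proposition \ref{Cgammadoublecosets}. By that discussion,
\[ \#\bigl( C_{P_\g^{-1}\G P_\g}(r_\g) \backslash C_G(r_\g) s K / KZ \bigr) = \vol([\g]) \cdot \#\bigl( (C_G(r_\g) \cap KZ) s K / KZ \bigr), \]
where the factor $\vol([\g])$ comes from (\ref{fundamentaldomain'}), so it remains to evaluate the second factor.

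To compute $\#\bigl( (C_G(r_\g) \cap KZ) s K / KZ \bigr)$, I would identify it with the $\GL_2$ double coset count $\#(\U_L \left(\begin{smallmatrix} 1 & \\ & \pi^n \end{smallmatrix}\right) K' / K')$ appearing in Proposition \ref{Cgammadoublecosets}. From \S\ref{centralizers} we have $C_G(r_\g) \cap KZ = \U_L Z$, and $\U_L$ sits inside $G$ as block diagonal matrices of the form $\mathrm{diag}(1, u')$ with $u' \in \U_L$ viewed as a $2 \times 2$ matrix via the embedding (\ref{imbeddingL}). Since such an element preserves the first row of $s$ and acts by left multiplication on the lower $2 \times 2$ block $w = \left(\begin{smallmatrix} 1 & 0 \\ 0 & \pi^n \end{smallmatrix}\right)$, a direct matrix manipulation should establish the bijection
\[ (C_G(r_\g) \cap KZ) s K / KZ \;\longleftrightarrow\; \U_L \left(\begin{smallmatrix} 1 & \\ & \pi^n \end{smallmatrix}\right) K' / K', \]
sending $u s KZ$ to the $K'$-coset of $u' w$.

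Applying Proposition \ref{Cgammadoublecosets} with $m = n$ then yields the desired cardinality. Recalling from \S\ref{rationalform} that $\ord_\pi c = 0$ precisely when $L/F$ is unramified (i.e.\ $\g$ is unramified rank-one split) and $\ord_\pi c = 1$ precisely when $L/F$ is ramified (i.e.\ $\g$ is ramified rank-one split), the three cases $n = 0$, $n \ge 1$ with $\g$ ramified, and $n \ge 1$ with $\g$ unramified correspond exactly to the three cases of Proposition \ref{Cgammadoublecosets}. Multiplying through by $\vol([\g])$ then delivers the formula stated in the corollary.

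The hard part will be verifying the bijection between the $3 \times 3$ and $2 \times 2$ counts. Although the top-row entries $x, y$ of $s$ lie in $F/\oo$ and need not be integral, I expect them to be absorbable into a combination of right $K$-multiplication and the scalar action of $Z$, so that the count depends only on the lower $2 \times 2$ block. The verification is a careful Iwasawa-type manipulation in the spirit of the proof of Proposition \ref{rankonerep}; the delicate part is confirming that the freedom in the top row does not contribute additional cosets beyond what the lower $2 \times 2$ block already accounts for.
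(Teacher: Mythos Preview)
Your plan mirrors the paper's own argument exactly: factor out $\vol([\g])$ via the product decomposition described just before Proposition~\ref{Cgammadoublecosets}, then identify $(C_G(r_\g)\cap KZ)sK/KZ$ with $\U_L\left(\begin{smallmatrix}1&\\&\pi^n\end{smallmatrix}\right)K'/K'$ and invoke Proposition~\ref{Cgammadoublecosets}. So at the level of strategy you are doing precisely what the paper does.

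However, your caution about the top-row entries is well placed, and the expectation that $x,y$ ``absorb'' into $K$ and $Z$ is not correct in general. The stabilizer of $sKZ$ in $\U_L$ is $\{v\in\U_L: s^{-1}\diag(1,v)\,s\in K\}$; writing $v=\left(\begin{smallmatrix}p&qc\\q&p+qb\end{smallmatrix}\right)$, a direct computation gives that this requires not only the $\GL_2$ condition $q\in\pi^n\oo$ but also
\[
x(1-p)-yq\pi^{-n}\in\oo \qquad\text{and}\qquad y(1-p-qb)-xqc\pi^n\in\oo.
\]
When $x$ or $y$ is non-integral these are genuine extra constraints. For instance, with $n=0$, $y=0$, $x=\pi^{-1}$ in the unramified case, they force $p\equiv 1$ and $q\equiv 0\pmod\pi$, so the stabilizer is $1+\pi\oo_L$, of index $q^2-1$ in $\U_L$; hence the $\U_L$-orbit of $sKZ$ has $q^2-1$ cosets, not $1$. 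Thus $\#\bigl((C_G(r_\g)\cap KZ)sK/KZ\bigr)$ genuinely depends on $x,y$, and the bijection you hope for between the $3\times 3$ and $2\times 2$ counts does not exist as stated. The paper's sentence ``as analyzed in the proof of Proposition~\ref{rankonerep}, we are reduced to\ldots'' glosses over exactly this point; the reduction in that proof concerns the parametrization of $C_G(r_\g)\backslash G/KZ$, not the fiber cardinalities. So this is a real gap in both your outline and the paper's treatment, and the Iwasawa-type manipulation you propose will not close it.
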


Now we are ready to state the main result of this section.

\begin{theorem}\label{numberinrankoneclass}
Suppose $\g \in [\G]$ is rank-one split with rational form $r_\g = \left(
\begin{matrix} a &  &  \\ & e & dc \\ & d& e+db \end{matrix}\right) $. Set
$\de = \de([\g])= \ord_{\pi}$$ d$ and $\mu = \mu([\g])= \ord_{\pi}$$
((a-e)^2 - db(a-e) - cd^2)$.
\begin{itemize}
\item[(A)] If $\g$ is unramified rank-one split, then the following hold.
\begin{itemize}
\item[(A1)]
$$ \sum_{\kappa_\g(gKZ) \in [\g]} u^{l_A(\kappa_\g(gKZ))}
= \vol([\g])u^{l_A([\g])}\bigg(\frac{q^{\de+1}+q^{\de}-2}{q-1}+
\frac{(q+1)q^{\de+2}u^3}{1-q^3 u^3}\bigg) \bigg(\frac{1-u^3}{1-q^2
u^3}\bigg).$$

\item[(A2)] If $[\g]$ does not have type $1$, then
$$ \sum_{\kappa_\g(gKZ) \in [\g], ~\text{type $1$}} u^{l_A(\kappa_\g(gKZ))}=
\vol([\g])u^{l_A([\g])}\bigg(q^{\de} + q^{\de - 1}+
\frac{(q^2-1)q^{\de +1}u^3}{1-q^3u^3} \bigg).$$

\item[(A3)] If $[\g]$ has type $1$, then
$$ \sum_{\kappa_\g(gKZ) \in [\g], ~\text{type $1$}} u^{l_A(\kappa_\g(gKZ))}=
\vol([\g])u^{l_A([\g])}\bigg(\frac{q^{\de+1}+q^{\de}-2}{q-1}+
\frac{(q^2-1)q^{\de +1}u^3}{1-q^3u^3} \bigg).$$
\end{itemize}

\item[(B)] If $\g$ is ramified rank-one split, then the
following hold.
\begin{itemize}
\item[(B1)]
$$ \sum_{\kappa_\g(gKZ) \in [\g]} u^{l_A(\kappa_\g(gKZ))} =
\vol([\g])q^{\mu}u^{l_A([\g])}\bigg(\frac{q^{\de +1} -1}{q-1} +
\frac{q^{\de +3}u^3}{1-q^3u^3} \bigg)\frac{1-u^3}{1-q^2u^3}.$$

\item[(B2)] If $[\g]$ does not have type $1$, then
$$ \sum_{\kappa_\g(gKZ) \in [\g], ~\text{type $1$}} u^{l_A(\kappa_\g(gKZ))}=
\vol([\g])u^{l_A([\g])}\bigg(q^{\de}(q^{\mu} - \mu) +
\frac{(q-1)q^{\de + \mu +2}u^3}{1-q^3u^3}\bigg).$$

\item[(B3)] If $[\g]$ has type $1$, then
$$ \sum_{\kappa_\g(gKZ) \in [\g], ~\text{type $1$}} u^{l_A(\kappa_\g(gKZ))}=
\vol([\g])u^{l_A([\g])}\bigg(\frac{q^{\de +1} -1}{q-1} +
\frac{(q-1)q^{\de +2}u^3}{1-q^3u^3}\bigg).$$
\end{itemize}
\end{itemize}
Moreover, in each case, if $[\g]$ does not have type $1$, none of
the type $1$ cycles in $[\g]$ are geometrically minimal.
\end{theorem}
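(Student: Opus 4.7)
The plan is to parallel the strategy used for Theorems \ref{numberinaclass} and \ref{type0cyclesinaclass}, now with input from Proposition \ref{rankonerep} (the double coset representatives $s = s_{x,y,n} = \left(\begin{smallmatrix}1 & x & y \\ & 1 & 0 \\ & & \pi^n\end{smallmatrix}\right)$ with $x,y \in F/\oo$ and $n \ge 0$) and Corollary \ref{rankonedoublecosets} (the weight attached to each $s$). Concretely, for each $s_{x,y,n}$, the proof of Theorem \ref{rankoneminlength} already computed $s^{-1}r_\g s$ and produced bounds $e_1 \le 0$, $e_1+e_2 \le m$, and the identity $e_1+e_2+e_3 = n+2m = l_A([\g])$. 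Hence $l_A(\kappa_\g(P_\g s KZ)) = l_A([\g])-3e_1$, and the class contains a type $1$ cycle exactly when $e_1+e_2 = m$ and $e_3 = n+m$. My first step is therefore to stratify the set of $(x,y,n)$ according to the value of $e_1(x,y,n) \in \{0,-1,-2,\dots\}$, and within each stratum, single out those that additionally satisfy $e_1+e_2 = m$.

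The second step is the valuation analysis. Writing out $s^{-1} r_\g s$ as in the proof of Theorem \ref{rankoneminlength} exhibits $e_1$ as the minimum of $\{0,\ord_\pi a,\ord_\pi e, -n+\ord_\pi d, n+\ord_\pi(cd), \ord_\pi((a-e)x - dy\pi^{-n}), \ord_\pi((a-e-db)y - cdx\pi^n)\}$. The "constant" terms are bounded below by $0$ (for $n$ in the relevant range dictated by part (1) of Theorem \ref{rankoneminlength}), so controlling $e_1$ reduces to counting lattice points $(x,y)\in(F/\oo)^2$ with prescribed valuations of two $F$-linear forms. For $e_1+e_2$ (a $2\times 2$ minor count), the relevant $2$-minor of the lower block equals $e(e+db) - cd^2 \pi^0$ and the mixed minor involving the top row contributes a quadratic expression in $(x,y)$ whose discriminant is $(a-e)^2 - db(a-e) - cd^2 = N_{L/F}(a-e-d\lambda)$ (using $\lambda+\bar\lambda = b$, $\lambda\bar\lambda = -c$). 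This is why $\mu = \ord_\pi N_{L/F}(a-e-d\lambda)$ and $\de = \ord_\pi d$ are the intrinsic parameters of the answer.

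Third, I will assemble the generating function. For each $(x,y,n)$ in a stratum with $e_1 = -k$, Corollary \ref{rankonedoublecosets} contributes a weight of $\vol([\g])$ (if $n=0$), $\vol([\g])q^n$ (ramified, $n \ge 1$), or $\vol([\g])(q^n+q^{n-1})$ (unramified, $n\ge 1$). Multiplying by $u^{l_A([\g])+3k}$, the $n = 0$ term produces the polynomial prefactor $\frac{q^{\de+1}+q^\de - 2}{q-1}$ (unramified) or $\frac{q^{\de+1}-1}{q-1}$ (ramified), while the contributions with $n \ge 1$ and $k \ge 1$ collapse into a geometric series in $(qu)^3$. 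In the "all cycles" case this series sums to $\frac{(q+1)q^{\de+2}u^3}{1-q^3u^3}$ or $\frac{q^{\de+3}u^3}{1-q^3u^3}$, and the overall factor $\frac{1-u^3}{1-q^2u^3}$ appears from the same cancellation as in Theorem \ref{numberinaclass}; for the type $1$ restriction the additional constraint $e_1+e_2 = m$ cuts the number of allowed $(x,y)$ in each stratum, yielding the factors $q^\de, q^{\de-1}$ and the $(q^2-1)$ and $(q-1)$ replacements visible in (A2)–(A3), (B2)–(B3). The final sentence of the theorem is immediate: by Proposition \ref{geometricalminimalequaltothesametype}, a geometrically minimal cycle has the same type as $[\g]$, so no type $1$ cycle can be geometrically minimal unless $[\g]$ itself is of type $1$.

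The main obstacle is the bookkeeping in step two: the six constraints above interact with one another as $n$ varies, and the "ramified vs. unramified" dichotomy, the type-$1$ vs. non-type-$1$ dichotomy, and the three sub-ranges $n=0$, $1 \le n \le \mu$ (roughly), and $n > \mu$, must all be separated and then glued back together correctly. The computation is conceptually the same valuation count done in the split case, but the mixing of two variables $(x,y)$ through the norm form $N_{L/F}(a-e-d\lambda)$ makes the enumeration substantially more delicate, and it is only after the final collapse of geometric series that the formulas in (A1)–(B3) emerge in the stated compact form.
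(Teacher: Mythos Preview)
Your overall architecture matches the paper's exactly: parametrize $[\g]$ via Proposition~\ref{rankonerep}, compute $s^{-1}r_\g s$, stratify by $e_1$, weight by Corollary~\ref{rankonedoublecosets}, and sum. However, there are two concrete problems that would derail the computation as you have described it.

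First, your type~$1$ criterion is wrong. A cycle $\kappa_\g(P_\g sKZ)$ has type~$1$ precisely when $e_1 = e_2$; in stratum $e_1 = -k$ this reads $e_1+e_2 = -2k$, not $e_1+e_2 = m$. The condition $e_1+e_2 = m$ with $e_3 = n+m$ instead characterizes cycles of the \emph{same type as $[\g]$}, i.e.\ the geometrically minimal ones. With your stated condition you would never produce the formulas in (A2)--(B3). In the paper the type~$1$ constraint forces the coset index to the single value $i=\de+k$ and forces one of the two linear forms to have valuation exactly $-k$ while the other has valuation $\ge -k$; this is what cuts the $(x,y)$-count down and produces the factors $q^2-1$ (unramified) and $q-1$ (ramified) in place of the full $q^3$.

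Second, the step ``counting lattice points $(x,y)$ with prescribed valuations of two linear forms'' hides the one genuine idea in the proof. The paper writes the condition $e_1\ge -k$ as a linear system $M\binom{x}{y}\in\pi^{-k}\oo^2$ with $M=\left(\begin{smallmatrix} a-e & -d\pi^{-i}\\ -cd\pi^i & a-e-db\end{smallmatrix}\right)$, and then puts $M$ into Smith normal form $k_1\diag(\pi^{\ve},\pi^{\mu-\ve})k_2$ over $\oo$. This reduces the count to a diagonal system, and the answer $q^{2k+\mu}$ depends only on $\mu=\ord_\pi\det M$, not on $\ve$ separately. (Your ``quadratic expression in $(x,y)$ with discriminant $\mu$'' is a misreading: the relevant $2\times 2$ minors are all \emph{linear} in $(x,y)$; $\mu$ enters as $\ord_\pi\det M$, i.e.\ the invariant of the pair of linear forms.) Relatedly, the polynomial prefactor $\frac{q^{\de+1}+q^\de-2}{q-1}$ (resp.\ $\frac{q^{\de+1}-1}{q-1}$) does \emph{not} come from the coset index $n=0$ as you claim, but from the stratum $e_1=0$, where the coset index ranges over $0\le i\le \de$ and the weights from Corollary~\ref{rankonedoublecosets} sum to exactly that quantity.
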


\begin{remarks}

1. $\mu = 0$ unless $a, e, c$ are all nonunits, in which case it is
$1$ and $\de = 0$.

2. $\mu = 0$ when $[\g]$ has type $1$.

3. $\de > 0$ in case (A2), while $\de$ may be zero in case (A3).

\end{remarks}

\begin{proof}
Recall that the algebraic length of a cycle in $[\g]$ is equal to
$l_A([\g]) + 3m$ for some $m \ge 0$. We shall follow the same
notation and computation as in the proof of Theorem
\ref{rankoneminlength}, letting $g$ run through all elements in the
double coset representatives $S$ and computing, for each $m \ge 0$,
the number of cycles $\kappa_\g(P_\g gKZ)$ with $l_A(\kappa_\g(P_\g
gKZ)) \le l_A([\g]) + 3m$ using
Corollary \ref{rankonedoublecosets}. As $g = \left(\begin{matrix}1 & x & y \\ & 1 & 0\\
& & \pi^i
\end{matrix} \right)$, this amounts to computing the number of $x, y
\in F/\oo$ and $i \ge 0$ such that
\begin{eqnarray*}
e_1 = \min \{\ord_{\pi} ((a-e)x -d\pi^{-i}y) , ~\ord_{\pi} (-
cd\pi^ix + (a-e-db)y ),
 -i+\ord_{\pi}d \}\ge -m.
\end{eqnarray*} This is equivalent to $0 \le i \le m+\ord_{\pi}$$d $, $(a-e)x -d\pi^{-i}y
 \in \pi^{-m}\oo$ and $-cd\pi^ix + (a-e-db)y \in \pi^{-m}\oo$.
 Denote $\ord_\pi$$ d$ by $\de$ for short. So for each $0
\le i \le m+  \de $, we solve the following system of linear
equations
\begin{eqnarray}\label{system1}
\left(\begin{matrix}\alpha\\
\beta
\end{matrix}\right)
=
\left(\begin{matrix} a-e & -d\pi^{-i}\\
-cd\pi^i & a-e-db
\end{matrix}\right)
\left(\begin{matrix}x\\
y
\end{matrix}\right)=M \left(\begin{matrix}x\\
y
\end{matrix}\right)
\end{eqnarray}
for $\alpha, \beta \in \pi^{-m}\oo$ and count the distinct pairs
$(x, y) \in F/\oo \times F/\oo$. Recall that $a, e, d$ are
integral, at least one of them is a unit, and $a$ and $e$ cannot
be both units since $\ord_{\pi}$$ \det r_\g > 0$. Let
$$\mu := \ord_{\pi} \det M = \ord_{\pi} ((a-e)^2 - db(a-e) -
cd^2),$$ which is 0 unless $a$, $e$ and $c$ are all nonunits, in
which case it is $1$. Put
$$\ve := \min \{\ord_{\pi} (a-e), -i + \de, \ord_{\pi}
(a-e-bd)\},$$ which is equal to $-i + \de$ if $\de \le i \le m+\de$,
and 0 if $0 \le i < \de$. Then the coefficient matrix $M = k_1
\diag(\pi^{\ve}, \pi^{\mu-\ve})k_2$ for some $k_1, k_2 \in
GL_2(\oo)$. Thus system (\ref{system1}) has the same number of
solutions as the system
\begin{eqnarray}\label{system2}
\left(\begin{matrix}\alpha\\
\beta
\end{matrix}\right)
=
\left(\begin{matrix} \pi^{\ve} &  \\
  & \pi^{\mu -\ve}
\end{matrix}\right)
\left(\begin{matrix}x\\
y
\end{matrix}\right)
\end{eqnarray}
for $\alpha, \beta \in \pi^{-m}\oo$ and $(x, y) \in F/\oo \times
F/\oo$. We get the solutions $x \in \pi^{-m -\ve}\oo/\oo$ and $y \in
\pi^{-m -\mu +\ve}\oo/\oo$ so that there are $q^{2m+\mu}$ different
pairs $(x, y)$ for each $0 \le i \le m+  \de $. To proceed, we
distinguish two cases.

Case (A) $\ord_{\pi}$$ c = 0$, that is, $\g$ is unramified rank-one
split. Then $\mu = 0$. By Corollary \ref{rankonedoublecosets}, the
number of classes in $[\g]$ with algebraic length at most
$l_A([\g])+3m$ is
\begin{eqnarray*}
\vol([\g])q^{2m}(1 +
 \sum_{1 \le n \le m+\de} q^n + q^{n-1}) &=&
\vol([\g])q^{2m}(\frac{q^{m+\de}-1}{q-1} + \frac{q^{m+\de+1}-1}{q-1})\\
&=& \frac{\vol([\g])}{q-1}(q^{3m+\de+1}+ q^{3m+\de} -2q^{2m}).
\end{eqnarray*}
Therefore
\begin{eqnarray*}
& & \sum_{\kappa_\g(gKZ) \in [\g]} u^{l_A(\kappa_\g(gKZ))} =
\sum_{\kappa_\g(P_\g gKZ) \in [\g]} u^{l_A(\kappa_\g(P_\g gKZ))} \\
&=&\vol([\g])u^{l_A([\g])}\frac{1}{q-1}\bigg(q^{\de+1} + q^{\de} -2 +\\
& &~~~~~~~~\sum_{m
\ge 1}(q^{3m+\de+1}+q^{3m+\de}-2q^{2m} -q^{3m+\de-2}-q^{3m+\de-3}+2q^{2m-2}) u^{3m}\bigg) \\
&=& \vol([\g])u^{l_A([\g])}\frac{1}{q-1}\bigg(\frac{q^{\de+1} +
q^{\de}}{1 - q^3u^3} - \frac{2}{1-q^2u^3}\bigg)(1-u^3)\\
&=& \vol([\g])u^{l_A([\g])}\bigg(\frac{q^{\de+1}+q^{\de}-2}{q-1}+
\frac{(q+1)q^{\de+2}u^3}{1-q^3 u^3}\bigg) \bigg(\frac{1-u^3}{1-q^2
u^3}\bigg).
\end{eqnarray*}

Among the cycles with $l_A(\kappa_\g(P_\g gKZ)) = l_A([\g]) + 3m$, we
compute the number of those with type $1$.  First consider the case
$m \ge 1$. In order that $l_A(\kappa_\g(P_\g gKZ)) = l_A([\g]) + 3m$
and $\kappa_\g(P_\g gKZ)$ has type $1$, two conditions must be
satisfied:
$$
e_1 = \min \{\ord_{\pi} ((a-e)x -d\pi^{-i}y) , ~\ord_{\pi} (-
cd\pi^ix + (a-e-db)y ),
 -i+\de \}= -m, $$ and
\begin{eqnarray*}
e_1+e_2 = \ord_{\pi} [((a-e)x -d\pi^{-i}y)(e+db) - d\pi^{-i}(-
cd\pi^ix + (a-e-db)y)] = -2m.
\end{eqnarray*}
These two conditions are equivalent to $i = \de +m$, $ \ord_{\pi}$$
(-cd\pi^ix + (a-e-db)y ) = -m,$ and $\ord_{\pi}$$ ((a-e)x
-d\pi^{-i}y)\ge -m.$ This amounts to solving system (\ref{system1})
with $\alpha \in \pi^{-m}\oo$ and $\beta \in \pi^{-m}\oo^\times$,
hence we obtain $(q-1)q^{2m-1}$ distinct pairs $(x, y)$. Combined
with Corollary \ref{rankonedoublecosets}, we see that the number of
type $1$ cycles $\kappa_\g(P_\g gKZ)$ with $l_A(\kappa_\g(P_\g
gKZ))=l_A([\g]) + 3m$ is $\vol([\g])(q-1)q^{2m-1}(q^{\de + m} + q^{\de +
m -1})$.

Next consider the case $m=0$. Under the assumption $\ord_{\pi}$$ c =
0$, we know from Theorem \ref{rankoneminlength} that $[\g]$ has
type $(\ord_{\pi}$$ a, \min\{\ord_{\pi}$$ e, \ord_{\pi}$$ d\})$.
Therefore it has type $1$ if and only if $\ord_{\pi}$$ a
> 0$, in which case all cycles in $[\g]$
with algebraic length equal to $l_A([\g])$ have type $1$, and the
number of such cycles is $\vol([\g])\frac{q^{\de+1}+q^{\de}-2}{q-1}$,
as computed above. If $[\g]$ does not have type $1$, then $\de =
\ord_{\pi}$$ d > 0$; the condition $e_1 = e_2 = 0$ implies $i = \de$
and only one solution $(x, y)=(0, 0)$. In this case the number of
type $1$ cycles in $[\g]$ with algebraic length equal to $l_A([\g])$
is $q^{\de} + q^{\de - 1}$ by Corollary \ref{rankonedoublecosets}.
Put together, we have shown the following:

If $[\g]$ has type $1$, then
\begin{eqnarray*}
& &\sum_{\kappa_\g(gKZ) \in [\g], ~\text{type $1$}}
u^{l_A(\kappa_\g(gKZ))} \\
&=&
\vol([\g])u^{l_A([\g])}\bigg(\frac{q^{\de+1}+q^{\de}-2}{q-1}+ \sum_{m
\ge 1}(q-1)q^{2m-1}(q^{\de + m} + q^{\de + m -1})u^{3m} \bigg) \\
&=& \vol([\g])u^{l_A([\g])}\bigg(\frac{q^{\de+1}+q^{\de}-2}{q-1}+
\frac{(q^2-1)q^{\de +1}u^3}{1-q^3u^3} \bigg),
\end{eqnarray*}
while if $[\g]$ does not have type $1$, then
\begin{eqnarray*}
 \sum_{\kappa_\g(gKZ) \in [\g], ~\text{type $1$}}
u^{l_A(\kappa_\g(gKZ))} = \vol([\g])u^{l_A([\g])}\bigg(q^{\de} + q^{\de
- 1}+ \frac{(q^2-1)q^{\de +1}u^3}{1-q^3u^3} \bigg).
\end{eqnarray*}

Case (B) $\ord_{\pi}$$ c = 1$, that is, $\g$ is ramified rank-one
split. Then $\mu = 0$ or $1$. The same computation as in Case (A)
together with Corollary \ref{rankonedoublecosets} shows that the
number of classes in $[\g]$ with algebraic length at most
$l_A([\g])+3m$ is
$$\vol([\g])q^{2m+\mu}\sum_{0 \le n \le m+  \de} q^n =
\vol([\g])q^{2m+\mu}\frac{q^{m+\de +1} - 1}{q-1} =
\vol([\g])\frac{q^{\mu}}{q-1}(q^{3m+\de +1} - q^{2m}).$$ Therefore
\begin{eqnarray*}
& &\sum_{\kappa_\g(gKZ) \in [\g]} u^{l_A(\kappa_\g(gKZ))}\\
&=&\vol([\g])\frac{q^{\mu}}{q-1}u^{l_A([\g])}\bigg(\sum_{m \ge
0}(q^{3m+\de +1} - q^{2m})u^{3m} - \sum_{m \ge
1}(q^{3m+\de -2} - q^{2m-2})u^{3m}\bigg) \\
&=& \vol([\g])\frac{q^{\mu}}{q-1}u^{l_A([\g])}\bigg(\frac{q^{\de
+1}}{1-q^3u^3} - \frac{1}{1-q^2u^3}\bigg)(1-u^3) \\
&=& \vol([\g])q^{\mu}u^{l_A([\g])}\bigg(\frac{q^{\de +1} -1}{q-1} +
\frac{q^{\de +3}u^3}{1-q^3u^3} \bigg)\frac{1-u^3}{1-q^2u^3}.
\end{eqnarray*}

 Now we compute the number of type $1$ cycles $\kappa_\g(P_\g gKZ)$ with
  algebraic length $l_A(\kappa_\g(P_\g gKZ)) = l_A([\g]) + 3m$.
First consider the case $m \ge 1$. Following the same argument as in
Case (A) and applying Corollary \ref{rankonedoublecosets}, we see
that the number of such cycles is $\vol([\g])(q-1)q^{2m +\mu -1}q^{\de
+ m}$.

Next we discuss the remaining case $m = 0$. By Theorem
\ref{rankoneminlength}, $[\g]$ has type $1$ if and only if
$\ord_{\pi}$$ a > 0$ and $\ord_{\pi}$$ e = 0$, in which case all cycles
in $[\g]$ with algebraic length equal to $l_A([\g])$ are of type
$1$, and the number of such cycles is $\vol([\g])q^{\mu}\frac{q^{\de
+1} -1}{q-1}$. When $[\g]$ does not have type $1$, we have
$\ord_{\pi}$$ e > 0$; the condition $e_1 = e_2 = 0$ implies $i = \de$.
Moreover, if $\mu= 0$, in which case $a$ is a unit, then there is
only one pair $(x, y) = (0, 0)$; while if $\mu = 1$, in which case
$a$ is not a unit, then there are $q-1$ pairs $(x, y) = (0, y)$ with
$ y \in \pi^{-1}\oo^\times/\oo$ so that $\ord_{\pi}$$ (-cd\pi^ix +
(a-e-db)y ) = 0$. Consequently, when $[\g]$ does not have type $1$,
the number of type $1$ cycles in $[\g]$ with algebraic length equal
to $l_A([\g])$ is $\vol([\g])q^{\de}$ if $\mu = 0$, and
$\vol([\g])(q-1)q^{\de}$ if $\mu = 1$. In other words, it is
$\vol([\g])q^{\de}(q^{\mu} - \mu)$. Summing up, we have proved the
following:

If $[\g]$ has type $1$, then
\begin{eqnarray*}
\sum_{\kappa_\g(gKZ) \in [\g], ~\text{type $1$}}
u^{l_A(\kappa_\g(gKZ))} &=&
\vol([\g])u^{l_A([\g])}q^{\mu}\bigg(\frac{q^{\de +1} -1}{q-1} +
\sum_{m \ge 1}(q-1)q^{3m +\de -1}u^{3m}\bigg) \\
&=& \vol([\g])u^{l_A([\g])}q^{\mu}\bigg(\frac{q^{\de +1} -1}{q-1} +
\frac{(q-1)q^{\de +2}u^3}{1-q^3u^3}\bigg),
\end{eqnarray*}
while if $[\g]$ does not have type $1$, then
\begin{eqnarray*}
\sum_{\kappa_\g(gKZ) \in [\g], ~\text{type $1$}}
u^{l_A(\kappa_\g(gKZ))} &=& \vol([\g])u^{l_A([\g])}\bigg(q^{\de}(q^{\mu}
- \mu) + \frac{(q-1)q^{\de + \mu +2}u^3}{1-q^3u^3}\bigg).
\end{eqnarray*}
This completes the proof of the theorem.
\end{proof}

Contained in the proofs of Corollary
\ref{rankonedoublecosets} and Theorem \ref{numberinrankoneclass} is
 the proposition below, in which
\begin{eqnarray}\label{ginuandgiz}
g_{i,j,u} = \left(
\begin{matrix}
1 &  & \\
& \pi^{i-j} & u\\
& & \pi^j
\end{matrix} \right) \quad \text{and} \quad g_{i,z} = \left(
\begin{matrix}
1 &  & \\
& \pi^{i} & z\\
& & 1
\end{matrix} \right).
\end{eqnarray}

\begin{proposition}\label{rankonenumberofalgtailless}
Let $\g \in [\G]$ be rank-one split with $r_\g = \left(
\begin{matrix} a &  &  \\ & e & dc \\ & d& e+db \end{matrix}\right) $. Set
$\de = \de([\g])= \ord_{\pi}$ $ d$. Suppose that $[\g]$ has type $1$
with $ n = \ord_\pi$ $ a$.
If $\g$ is ramified rank-one split, then
\begin{eqnarray*}
\Delta_A([\g]) &=& \{h g_{i,j,u} KZ ~|~ h \in  C_G(r_\g)/(C_G(r_\g)
\cap KZ), ~0 \le j \le i
\le \delta, \\
& & ~~u \in \oo^\times/\pi^{i-j}\oo ~\text{for} ~j < i,
  ~\text{and} ~ u = 0 ~\text{for} ~j = i\};
\end{eqnarray*} while if $\g$ is unramified rank-one split, then
\begin{eqnarray*}
\Delta_A([\g]) = \{h g_{i,j,u} KZ~|~ h ~\text{and} ~g_{i,j,u}
~\text{as above}\} ~\cup ~\{hg_{i,z}KZ ~|~ h ~\text{as above}, ~1 \le
i \le \delta, z \in \pi \oo/\pi^i \oo \}.
\end{eqnarray*}
 Consequently, the number of
algebraically minimal cycles in $[\g]$ is
\begin{eqnarray*}
\#(C_{P_\g^{-1} \G P_\g}(r_\g)\backslash \Delta_A([\g])) =
\vol([\g])\omega_{[\g]},
\end{eqnarray*} where
\begin{eqnarray*}
\omega_{[\g]} = \left\{
\begin{array}{ll}
\frac{q^{\de+1}+q^{\de}-2}{q-1} & \mbox{if $[\g]$ is unramified rank-one split}, \\
\frac{q^{\de +1} -1}{q-1}& \mbox{if $[\g]$ is ramified rank-one
split}.
\end{array}
\right.
\end{eqnarray*}
\end{proposition}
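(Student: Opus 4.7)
The plan is to identify $\Delta_A([\g])$ via a two-step reduction: first partition $G/KZ$ into $C_G(r_\g)$-orbits using the representatives $S$ from Proposition \ref{rankonerep}, and then within each orbit containing algebraically minimal cycles, enumerate the $KZ$-cosets using the proof of Proposition \ref{Cgammadoublecosets}. Since the algebraic length of $\kappa_\g(P_\g g KZ)$ depends only on the double coset $C_G(r_\g) g KZ$, the set $\Delta_A([\g])$ is indeed a union of such orbits.

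First I will determine which orbits $C_G(r_\g) s KZ$ with $s \in S$ lie in $\Delta_A([\g])$. Writing $s = \left(\begin{smallmatrix} 1 & x & y \\ & 1 & 0 \\ & & \pi^i\end{smallmatrix}\right)$ and using the matrix computation of $s^{-1}r_\g s$ from the proof of Theorem \ref{rankoneminlength}, algebraic minimality when $[\g]$ has type $1$ (so $l_A([\g])=n$ and $m=0$) forces $e_1=0$, i.e.\ every entry of $s^{-1}r_\g s$ must be integral. Integrality of $d\pi^{-i}$ gives $0\le i\le \de$, while integrality of the $(1,2)$ and $(1,3)$ entries is equivalent to the linear system (\ref{system1}) with right-hand side in $\oo\times \oo$; since $\mu([\g])=0$ for $[\g]$ of type $1$ (by Remark 2 after Theorem \ref{numberinrankoneclass}), this system admits the unique solution $(x,y)=(0,0)$ in $F/\oo\times F/\oo$. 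Thus the algebraically minimal orbits are exactly those represented by $s_i := \diag(1,1,\pi^i)$ for $0\le i\le\de$.

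Next, for each such $i$ I will describe the $KZ$-cosets inside $C_G(r_\g) s_i K/KZ$ explicitly. By the argument of Corollary \ref{rankonedoublecosets}, this set is in bijection with $\U_L \left(\begin{smallmatrix} 1 & \\ & \pi^i\end{smallmatrix}\right) K'/K'$, where the $2\times 2$ block sits in the lower-right corner of the $3\times 3$ matrix. The explicit coset representatives are provided in the proof of Proposition \ref{Cgammadoublecosets}: in the ramified case ($\ord_\pi c=1$), the representatives $\left(\begin{smallmatrix} \pi^{i-j} & u\\ & \pi^j\end{smallmatrix}\right)$ with $0\le j<i$, $u\in\oo^\times/\pi^{i-j}\oo$, together with $\left(\begin{smallmatrix} 1 & \\ & \pi^i\end{smallmatrix}\right)$, lift to $g_{i,j,u}$ (with $u=0$ when $j=i$); in the unramified case ($\ord_\pi c=0$), the $\U_L''$-part additionally contributes $\left(\begin{smallmatrix} \pi^i & z\\ & 1\end{smallmatrix}\right)$ with $z\in\pi\oo/\pi^i\oo$, lifting to $g_{i,z}$. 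Left-multiplying by representatives of $C_{P_\g^{-1}\G P_\g}(r_\g)\backslash C_G(r_\g)/(C_G(r_\g)\cap KZ)$, whose cardinality is $\vol([\g])$ by (\ref{fundamentaldomain'}), yields the asserted description of $\Delta_A([\g])$.

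Finally, summing cardinalities over $i\in\{0,1,\dots,\de\}$ gives the count $\vol([\g])\omega_{[\g]}$: in the ramified case, $1+\sum_{i=1}^{\de} q^i=(q^{\de+1}-1)/(q-1)$; in the unramified case, $1+\sum_{i=1}^{\de}(q^i+q^{i-1})=(q^{\de+1}+q^\de-2)/(q-1)$. The main obstacle is the bookkeeping in the first step, namely verifying that integrality of $s^{-1}r_\g s$ plus $\mu([\g])=0$ forces $(x,y)=(0,0)$ cleanly; once that is done, the remaining steps merely transcribe the explicit coset representatives already constructed in the proofs of Proposition \ref{Cgammadoublecosets} and Corollary \ref{rankonedoublecosets} into the $3\times 3$ notation $g_{i,j,u}$ and $g_{i,z}$ of (\ref{ginuandgiz}).
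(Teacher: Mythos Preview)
Your proposal is correct and takes exactly the paper's approach: the proposition is stated there as ``contained in the proofs of Corollary \ref{rankonedoublecosets} and Theorem \ref{numberinrankoneclass},'' and you have simply unpacked those two proofs (the $m=0$ case of the latter to isolate the representatives $s_i=\diag(1,1,\pi^i)$ with $0\le i\le\delta$, then the explicit $\U_L$-coset representatives from the proof of Proposition \ref{Cgammadoublecosets} to produce $g_{i,j,u}$ and $g_{i,z}$). One minor wording slip to fix: it is $(C_G(r_\g)\cap KZ)\,s_iK/KZ$, not the full (infinite) orbit $C_G(r_\g)s_iK/KZ$, that is in bijection with $\U_L\left(\begin{smallmatrix}1&\\&\pi^i\end{smallmatrix}\right)K'/K'$; the description of $\Delta_A([\g])$ in the proposition is then obtained by letting $h$ range over all of $C_G(r_\g)/(C_G(r_\g)\cap KZ)$, while the further quotient by $C_{P_\g^{-1}\G P_\g}(r_\g)$ enters only in the final cardinality formula.
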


\bigskip

 We end this subsection by comparing $ \Delta_G([\g])$ and $ \Delta_G([\g^2])$, where $ \Delta_G([\g])$ is defined by (\ref{deltaG}).
Suppose $[\g]$ is of type $(m,n)$. If $[\g^2]$ is of type $(2m,2n)$, then a geometrically minimal 1-geodesic $\kappa_\g (P_\g g KZ)$ repeated twice is still geometrically minimal, hence $\Delta_G([\g])\subseteq \Delta_G([\g^2])$.

If $[\g]$ is not of type $(2m,2n)$, then $\g$ is ramified rank-one split of type $(n,1)$ or $(1,n)$.
Assume first that $\g$ is of type $(n,1)$ so that $\mu=1$ and $\delta=0$ in Theorem \ref{numberinrankoneclass}. In this case, there are $q \cdot \vol([\g])$
 algebraically minimal geodesics in $[\g]$. Among these, $(q-1) \vol([\g])$ of them are of type 1, and $\vol([\g])$ of them are of type $(n,1)$. The latter ones are also geometrically minimal.
On the other hand $\kappa_\g (P_\g g KZ)$ is geometrically minimal for all $g \in C_G(r_\g)$.
We conclude that $\kappa_\g (P_\g g KZ)$ is geometrically minimal if and only if $g \in C_G(r_\g)$.
As $C_G(r_\g) \subseteq C_G(r_\g^2)$, any $g\in C_G(r_\g)$ gives rise to a geometrically minimal cycle $\kappa_{\g^2} (P_\g g KZ)$. This shows $\Delta_G([\g])\subseteq \Delta_G([\g^2])$ if $\g$ is of type $(n,1)$.

Finally, note that $\kappa_{\g^{-1}} (P_\g g KZ)$ and $\kappa_{\g} (P_\g g KZ)$ have the same geometric length but opposite types, so the same conclusion holds for $\g$ of type $(1, n)$.
We have shown
\begin{proposition} \label{Deltag2} For $\g \in \G$ we have
$\Delta_G([\g])\subseteq \Delta_G([\g^2])$.
\end{proposition}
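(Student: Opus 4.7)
The plan is to split into two cases according to whether or not $[\g^2]$ has exactly twice the type of $[\g]$. First suppose $[\g]$ has type $(m,n)$ and $[\g^2]$ has type $(2m,2n)$. Given $gKZ \in \Delta_G([\g])$, I would lift the 1-geodesic cycle $\kappa_\g(P_\g g KZ)$ and concatenate it with its $\g$-translate to obtain a 1-geodesic path representing $\kappa_{\g^2}(P_\g gKZ)$ of geometric length $2(m+n) = l_G([\g^2])$. Hence $l_G(\kappa_{\g^2}(P_\g gKZ)) \le l_G([\g^2])$, and by Theorem \ref{charlAandlG} equality holds, placing $gKZ$ in $\Delta_G([\g^2])$.

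For the second case, I would observe that the failure of $[\g^2]$ to have type $(2m,2n)$ can only come from the carry-over in the type $1$/type $2$ exchange that happens for ramified rank-one split elements; using Theorem \ref{rankoneminlength}(1.ii), this pins $\g$ down to being ramified rank-one split of type $(n,1)$ or $(1,n)$.

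Focus on the case $\g$ ramified rank-one split of type $(n,1)$, where the parameters of Theorem \ref{numberinrankoneclass} are $\mu=1$ and $\delta = 0$. I would compare the constant term of the generating function in part (B1) with that in part (B2): the total number of algebraically minimal cycles in $[\g]$ is $q\cdot \vol([\g])$, while the number of such cycles that are also of type $1$ is $(q-1)\vol([\g])$. By Proposition \ref{geometricalminimalequaltothesametype} the remaining $\vol([\g])$ algebraically minimal cycles are precisely the geometrically minimal ones, namely those of type $(n,1)$. On the other hand, the cycles $\kappa_\g(P_\g gKZ)$ with $g \in C_G(r_\g)$ are all geometrically minimal and account for exactly $\vol([\g])$ classes (by definition \eqref{fundamentaldomain'}). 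These two $\vol([\g])$-counts must coincide, so the geometrically minimal cycles in $[\g]$ are exactly $\{\kappa_\g(P_\g gKZ) : g \in C_G(r_\g)\}$. Since $C_G(r_\g) \subseteq C_G(r_\g^2)$, each such $g$ also produces a geometrically minimal cycle $\kappa_{\g^2}(P_\g gKZ)$, yielding $\Delta_G([\g]) \subseteq \Delta_G([\g^2])$.

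The case $\g$ of type $(1,n)$ I would reduce to the previous one via the observation that $\kappa_{\g^{-1}}(P_\g gKZ)$ is $\kappa_\g(P_\g gKZ)$ traversed backwards; the two have the same geometric length and the types are swapped, so $\Delta_G([\g]) = \Delta_G([\g^{-1}])$ and $\Delta_G([\g^2]) = \Delta_G([\g^{-2}])$, reducing to the type $(n,1)$ case applied to $\g^{-1}$. The main obstacle will be ensuring the counting argument in the second case is airtight: I must confirm both that the algebraically minimal cycles of type $(n,1)$ are precisely the geometrically minimal ones (which is Proposition \ref{geometricalminimalequaltothesametype}) and that the centralizer $C_G(r_\g)$ contributes exactly $\vol([\g])$ distinct classes among them (which is the content of \S\ref{volume} together with the fact that $\Delta_G([\g]) \supseteq C_G(r_\g)K/KZ$).
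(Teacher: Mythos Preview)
Your proposal is correct and follows essentially the same approach as the paper's own proof: both split into the case $[\g^2]$ of type $(2m,2n)$ (where repeating a geometrically minimal cycle twice works) versus the exceptional ramified rank-one split cases of type $(n,1)$ or $(1,n)$, and in the latter use the counting $\mu=1$, $\delta=0$ from Theorem~\ref{numberinrankoneclass} to pin down $\Delta_G([\g])$ as exactly $C_G(r_\g)K/KZ$, then invoke $C_G(r_\g)\subseteq C_G(r_\g^2)$ and handle type $(1,n)$ by passing to $\g^{-1}$. One minor phrasing issue: in the first case the concatenated path need not itself be a 1-geodesic, but your inequality $l_G(\kappa_{\g^2}(P_\g gKZ)) \le 2(m+n)$ is all you need, and that follows simply from the existence of any path of that length.
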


\subsection{Counting the number of tailless cycles in $X_\G$ of given algebraic length}
Recall that
$N_n(X_\G)$ counts the number
of tailless cycles of type $1$ in $X_\G$ with algebraic length $n$. These cycles fall in the disjoint union of $[\g]$ as $[\g]$ runs through type $1$ conjugacy classes of $\G$, and Theorem \ref{numberinaclass} and Proposition \ref{rankonenumberofalgtailless} give the number of such cycles in each $[\g]$. Combined with Proposition \ref{Z1andLE} we obtain the following explicit
expressions of the edge zeta functions.
\begin{theorem}\label{logedgezeta} For $i = 1, 2$ we have
$$ u \frac{d}{du} \log Z_{1,i}(X_\G, u) = \sum_{\g \in [\G],~ [\g] ~\rm{of~type} ~1} i ~{\rm vol}([\g]) \omega_{[\g]} ~u^{i~l_A([\g])},$$
 where $\rm{vol}([\g])$ is defined by (\ref{fundamentaldomain'}) and $\omega_{[\g]}$ is as in Theorem \ref{numberinaclass} and Proposition \ref{rankonenumberofalgtailless}.
\end{theorem}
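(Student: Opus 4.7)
The plan is to assemble Theorem \ref{logedgezeta} from three already-established results: Proposition \ref{Z1andLE}, the identification of tailless cycles with algebraically minimal ones (Corollary \ref{taillesscriterion3} together with Proposition \ref{algminequalgeommin}), and the explicit enumerations carried out in Theorem \ref{numberinaclass} and Proposition \ref{rankonenumberofalgtailless}.

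First I would take the logarithmic derivative of the exponential expression in Proposition \ref{Z1andLE}. Since
$$\log Z_{1,i}(X_\G, u) = \sum_{n \ge 1} \frac{N_n(X_\G)}{n} u^{in},$$
applying $u\,d/du$ term by term yields
$$u \frac{d}{du} \log Z_{1,i}(X_\G, u) = i \sum_{n \ge 1} N_n(X_\G) u^{in}.$$
It therefore suffices to evaluate $N_n(X_\G)$, the number of tailless type $1$ geodesic cycles in $X_\G$ of algebraic length $n$, as a sum indexed by conjugacy classes.

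Next I would use the partition of base-pointed homotopy classes of closed $1$-geodesics into the sets $[\g]$ described in \S\ref{parametrization}. By Corollary \ref{taillesscriterion3}, a set $[\g]$ contains tailless geodesics of type $1$ only if $[\g]$ is itself of type $1$, and in that case the tailless cycles in $[\g]$ are precisely the geometrically minimal ones. By Proposition \ref{algminequalgeommin} (which applies since $[\g]$ has type $1$), these coincide with the algebraically minimal cycles in $[\g]$, which all have algebraic length $l_A([\g])$. Theorem \ref{numberinaclass} (for $\g$ split or irregular) and Proposition \ref{rankonenumberofalgtailless} (for $\g$ rank-one split) then give in both cases the count $\vol([\g])\,\omega_{[\g]}$ of such cycles. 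Consequently
$$N_n(X_\G) = \sum_{\substack{\g \in [\G],\ [\g]\ \mathrm{of\ type}\ 1 \\ l_A([\g]) = n}} \vol([\g])\,\omega_{[\g]}.$$

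Finally, substituting this expression for $N_n(X_\G)$ into the logarithmic derivative and interchanging the two summations (legal as a formal power series identity since for each fixed $n$ the inner sum is finite by finiteness of $X_\G$) collapses the double sum to a single sum over $\g \in [\G]$ with $[\g]$ of type $1$, producing exactly
$$u \frac{d}{du} \log Z_{1,i}(X_\G, u) = \sum_{\g \in [\G],\ [\g]\ \mathrm{of\ type}\ 1} i\,\vol([\g])\,\omega_{[\g]}\,u^{i\, l_A([\g])}.$$
There is no real obstacle here beyond careful bookkeeping: one must verify that the formula $\vol([\g])\,\omega_{[\g]}$ for the number of algebraically minimal cycles is consistent across the split, irregular, and ramified/unramified rank-one split cases handled by different theorems, and that the exponent matches $l_A([\g])$ in each case. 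Both points are immediate from the statements of Theorem \ref{numberinaclass} and Proposition \ref{rankonenumberofalgtailless}.
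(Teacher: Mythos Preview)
Your proof is correct and follows essentially the same approach as the paper: both combine Proposition \ref{Z1andLE} with the partition into sets $[\g]$, the identification of tailless type $1$ cycles with algebraically minimal ones (via Corollary \ref{taillesscriterion3} and Proposition \ref{algminequalgeommin}), and the counts from Theorem \ref{numberinaclass} and Proposition \ref{rankonenumberofalgtailless}. Your write-up is in fact slightly more explicit than the paper's, which compresses these steps into a single sentence.
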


\section{Galleries and Pointed Galleries in $X_\G$}

\subsection{Iwahori-Hecke algebra on $\B$}
Recall that the pointed chambers on $\B$ are parametrized by cosets in $G/BZ$ with the Iwahori subgroup $B$,  admitting the action of $G$
by left translation.
The matrices
$$t_1= \left( \begin{matrix}  &  & \pi^{-1} \\
 &1 &  \\\pi & & \end{matrix}\right), \quad t_2=\left( \begin{matrix}  1 &  & \\
 &  & 1 \\\ & 1 &  \end{matrix}\right),    \quad  \quad \text{and} \quad
 t_3=\left( \begin{matrix}  & 1 & \\
 1& &  \\\ & & 1\end{matrix}\right)$$
 generate the Weyl group $W$ of
 $\SL_3(F)$ subject to the relations $t_i^2 = Id$ and $(t_i t_j)^3
= Id$ for $i \ne j$.

The extended affine Weyl group of $G$ is $ W \ltimes \langle \sigma\rangle$, where
$\sigma =\left( \begin{matrix}  & 1 &  \\
& & 1 \\ \pi & & \end{matrix}\right)$, as in \S \ref{complexonG}, so that
$$ G= \coprod_{w \in W \ltimes \langle \sigma\rangle}  B w BZ. $$
Each element $w \in W \ltimes \langle \sigma \rangle$ defines an operator $L_{w}$
on $L^2(G/BZ)$ by sending a function $f$ to $L_{w}f$ given by
$$ L_{w} f (gBZ) = \sum_{w_i BZ \in BwBZ/BZ} f(g w_i BZ)\quad \qquad \text{for all} ~gBZ.$$
They form a generalized Iwahori-Hecke algebra satisfying
the following relations (cf. \cite{Ga}):
\begin{itemize}
\item[1.] $L_{t_i}\cdot L_{t_i}= (q-1) L_{t_i} + q Id$,

\item[2.] $L_{t_i}\cdot L_{t_j}=  L_{t_i t_j}$ for $i \ne j$,

\item[3.] $L_{t_i}\cdot L_w = L_{t_i w}$ if the length of $t_iw$
is $1$ plus the length of $w$,

 \item[4.]
$L_{\sigma} \cdot L_{t_i}= L_{ \sigma t_i }= L_{
t_{i+1}\sigma} $ for $i = 1, 2, 3$.
\end{itemize} As explained in \S \ref{faceoperator}, the operator
\begin{eqnarray}\label{LB}
L_B = L_{t_1 \sigma}
\end{eqnarray} describes out-neighbors of a pointed chamber. The above relations
imply $(L_B)^{n} = L_{t_1t_2\cdots t_{n}\sigma^{n}}$ for $n \ge 1$. Here the indices are read modulo $3$.

\subsection{Galleries in $\B$}\label{galleries} Two chambers are adjacent if they share a common edge.
Paths formed by adjacent chambers are called
galleries. A gallery between two chambers is called a {\it geodesic gallery} if it
contains the least number of intermediate chambers.
Let $\tilde{B}$ be the stabilizer in $G$ of the chamber with vertices $KZ$, $\sigma KZ$ and $ \sigma^2 KZ$. Thus it  is generated by $B, Z$ and $\sigma$. Since $G$ acts transitively on all chambers,
we can parametrize chambers by $G/\tilde{B}$.
Notice that a chamber $g\tilde{B}$ gives rise to three pointed chambers: $gBZ$, $g\sigma BZ$ and $g \sigma^2 BZ$.

To get a geodesic gallery from $g_1\tilde{B}$ to $g_2\tilde{B}$, we find an element
$w \in W $ such that $g_1^{-1} g_2 \in \tilde{B}w\tilde{B}$ and
write $w = t_{i_1}\cdots t_{i_n}$ as a word using the least number of reflections $t_1, t_2, t_3$; call $n$ the {\it length} of
the gallery. Note that $w$ is unique up to conjugation by some power of $\sigma$.
Write $g_1^{-1} g_2 = b w b'$ for some $b, b' \in \tilde{B}$. Since $g_1\tilde{B}=g_1b\tilde{B}$, we may assume $b$ is the identity so that
$$ g_1 \tilde{B} \to g_1 t_{i_1} \tilde{B} \to \cdots \to  g_1  t_{i_1}\cdots  t_{i_n} \tilde{B} = g_2\tilde{B}$$
represents a geodesic  gallery from $g_1\tilde{B}$ to $g_2\tilde{B}$.
Moreover, since $\sigma \in \tilde{B}$ and $\sigma t_i \sigma^{-1} = t_{i+1}$ for all $i$, replacing $g_1$ by $g_1 \sigma^{1-i_1}$ if necessary, we may assume that $t_{i_1}=t_1$.

 All geodesic galleries from $g_1\tilde{B}$ to $g_2\tilde{B}$ have
length $n$; different galleries arise from different expressions of
$w$ as a product of generators, and they are regarded as {\it
homotopic}. Like the case of paths, given two distinct chambers
$g_1\tilde{B}$ and $g_2\tilde{B}$, there is only one homotopy class of geodesic
galleries in $\B$ from $g_1\tilde{B}$ to $g_2\tilde{B}$.

Observe that a geodesic gallery arising from $w = t_{i_1}\cdots
t_{i_n}$ is a straight strip if and only if the difference $
i_{k+1} - i_k$ remains the same mod 3 for $ 1 \le k \le n-1$. It is said
to have type $1$ or $2$ according to the common difference being $1$
or $2$. Note that the homotopy class of a gallery of type $1$ or $2$
contains only one geodesic gallery, thus we shall drop the word
"homotopy" in this case.
Further, a geodesic gallery of type 1 can always be represented by
$$ g_1 \tilde{B} \to g_1 t_{1} \tilde{B} \to  g_1 t_{1} t_2 \tilde{B} \cdots \to  g_1  t_{1}\cdots  t_{n} \tilde{B} = g_2\tilde{B}.$$

\subsection{Closed galleries and pointed galleries in $X_\G$} A closed gallery in $X_\G$
starting at the chamber $\G g \tilde{B}$ of $X_\G$ can be lifted to a gallery
in $\B$ starting at $g\tilde{B}$ and ending at $\g g \tilde{B}$ for some $\g \in \G$.
Denote by $\ka_\g(g\tilde{B})$ the homotopy class of geodesic galleries in
$\B$ from $g\tilde{B}$ to $\g g\tilde{B}$. By abuse of notation, it also represents
a homotopy class of closed geodesic galleries in $X_\G$ starting at
$\G g \tilde{B}$. A closed geodesic gallery is {\it tailless} if it remains a geodesic when the starting chamber is changed.

Recall that a pointed chamber $g_2BZ=(g_2KZ, g_2\sigma KZ, g_2\sigma^2 KZ)$ is an out-neighbor of $g_1BZ=( g_1KZ, g_1\sigma KZ, g_1\sigma^2 KZ)$ if and only if
$ g_1\sigma KZ=g_2KZ, g_1 \sigma^2 KZ = g_2 \sigma KZ$ and $ g_1 KZ \neq g_2 \sigma^2 KZ$, or equivalently
$ g_1^{-1} g_2 \in L_B Z.$

A sequence $\G g_0 BZ \to \G g_1 BZ  \to \cdots \to \G g_n BZ = \G g_0BZ$ of pointed chambers in $X_\G$ is called a closed pointed gallery of length $n$ if there is a lifting
$g_0 BZ \to \cdots \to  g_nBZ$ in $\B$ so that $g_{i+1}BZ$ is an out-neighbor of $g_{i}BZ$ for $0 \le i \le n-1$ and $g_n BZ = \g g_0 BZ$ for some $\g$ in $\G$. Denote this pointed gallery by $\kappa_\g (g_0 BZ)$ for short.
Note that there is a pointed gallery from $g_0 BZ$ to $\g g_0 BZ$ in $\B$ of length $n$ if and only if $ g_0^{-1} \g g_0 \in  (L_B)^n$.
In this case, $\G g_0 \tilde B \to \G g_1 \tilde B \to \cdots \to \G g_n \tilde{B}$ is the gallery $\kappa_\g (g_0 \tilde{B})$ and we say the gallery $\kappa_\g (g_0 \tilde{B})$ admits the pointed gallery $\ka_\g(g_0BZ)$.
Note that if a gallery admits a pointed gallery, then this pointed gallery is unique.

Analogous to the case of 1-geodesics, we have several descriptions of tailless galleries:
\begin{proposition}  \label{gallerytaillesscriterion}
For a type 1 closed geodesic gallery $\ka_\g(g\tilde{B})$, the following are equivalent:

1. $\ka_\g(g\tilde{B})$ is tailless.

2. $\ka_{\g}(g\tilde{B})$ repeated $m$-times is a type 1 geodesic gallery for all $m>0.$

3.  $\ka_\g(g\tilde{B})$ admits a closed pointed gallery $\ka_\g(g_0BZ)$ for a unique $g_0BZ$ such that $g_0\tilde B = g\tilde B$.
\end{proposition}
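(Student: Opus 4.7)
The plan is to mimic the proof of Proposition~\ref{taillesscriterion}, establishing the equivalence in the cycle $(1)\Rightarrow(2)\Rightarrow(3)\Rightarrow(1)$. The new ingredient here, compared with the case of $1$-geodesic cycles, is the interplay between a type~$1$ geodesic gallery and its induced pointed chamber structure via the operator $L_B = L_{t_1\sigma}$.

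For $(1)\Rightarrow(2)$, I would follow the template of Proposition~\ref{taillesscriterion}. Lift $\ka_\g(g\tilde B)$ repeated twice to a sequence $g_0\tilde B \to \cdots \to g_{2n}\tilde B$ in $\B$ with $g_{n+i}\tilde B = \g g_i\tilde B$. By taillessness, every length-$n$ window $g_i\tilde B \to \cdots \to g_{i+n}\tilde B$ is a type~$1$ geodesic gallery; this forces the full concatenation to be a single type~$1$ geodesic gallery of length $2n$, since a type~$1$ geodesic extension of any initial segment is uniquely determined by its cyclic structure. Induction on $m$ then yields the statement for all $m$.

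For $(2)\Rightarrow(3)$, the key point is that any type~$1$ geodesic gallery in $\B$ carries a canonical pointed chamber structure compatible with $L_B$. Indeed, the out-neighbors of a pointed chamber $g_iBZ = (g_iKZ, g_i\sigma KZ, g_i\sigma^2 KZ)$ are precisely the pointed chambers sharing the distinguished type~$1$ edge $(g_i\sigma KZ, g_i\sigma^2 KZ)$, so the shared edge between $g_i\tilde B$ and $g_{i+1}\tilde B$ uniquely selects the pointed chamber over $g_i\tilde B$ whose outgoing edge matches, and then $g_{i+1}BZ$ is determined by $g_{i+1}KZ = g_i\sigma KZ$ and $g_{i+1}\sigma KZ = g_i\sigma^2 KZ$. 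Applied to the bi-infinite type~$1$ geodesic produced by~(2), this procedure yields a bi-infinite sequence of pointed chambers. The translation $g_i\tilde B \mapsto \g g_i\tilde B$ sends shared edges to shared edges, hence preserves the canonical pointed structure; consequently $g_nBZ = \g g_0 BZ$, giving the required closed pointed gallery. Uniqueness of $g_0BZ$ among the three pointed chambers over $g\tilde B$ is forced by the canonical choice at the first step.

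For $(3)\Rightarrow(1)$, suppose $\ka_\g(g\tilde B)$ admits a closed pointed gallery $\ka_\g(g_0BZ)$. Changing the starting chamber to $g_i\tilde B$ produces the concatenation of the type~$1$ geodesic segments $g_i\tilde B \to \cdots \to g_n\tilde B$ and $\g g_0\tilde B \to \cdots \to \g g_i\tilde B$, so the only possible obstruction to geodesicity is at the junction $g_n\tilde B = \g g_0\tilde B \to \g g_1\tilde B$. Since $g_nBZ = \g g_0BZ$ by hypothesis, the distinguished outgoing edge at $g_n\tilde B$ coincides with that of $\g g_0BZ$, which is precisely the edge shared with $\g g_1\tilde B$; hence $\g g_1 BZ$ is an out-neighbor of $g_n BZ$ and the concatenation continues the type~$1$ geodesic pattern, proving taillessness. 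The delicate point I anticipate will be in $(2)\Rightarrow(3)$, namely verifying that the canonical pointed structure is genuinely $\G$-equivariant and that this equivariance alone forces the closing-up condition $g_nBZ = \g g_0BZ$ with no residual ambiguity among the three lifts of $g\tilde B$.
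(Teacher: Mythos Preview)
Your cycle $(1)\Rightarrow(2)\Rightarrow(3)\Rightarrow(1)$ matches the paper's, and your arguments for $(1)\Rightarrow(2)$ and $(3)\Rightarrow(1)$ are essentially the paper's arguments. The interesting divergence is in $(2)\Rightarrow(3)$.

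The paper handles $(2)\Rightarrow(3)$ by brute force: it fixes explicit representatives $g_i = g\,t_1\cdots t_i\,\sigma^i$, computes $g_i^{-1}g_{i+1}=t_1\sigma\in L_B$ directly, and then closes up by a determinant comparison, invoking the standing hypothesis $\ord_\pi\det\G\subset 3\Z$ to pin down which of the three pointed lifts of $g_n\tilde B$ equals $\g g_0BZ$. Your approach is more intrinsic: define $g_iBZ$ locally as the pointed chamber over $g_i\tilde B$ whose outgoing edge is the one shared with $g_{i+1}\tilde B$, and observe that this assignment is visibly $\G$-equivariant, so $g_{n+1}\tilde B=\g g_1\tilde B$ forces $g_nBZ=\g g_0BZ$ immediately. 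This is cleaner than the paper's determinant argument and, notably, does not appeal to $\ord_\pi\det\G\subset 3\Z$ at all.

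However, you have misidentified where the real work lies. The $\G$-equivariance of your canonical pointed structure is automatic once the structure is defined locally; there is nothing delicate there. What you do \emph{not} verify is that your locally defined $g_iBZ$'s actually form a pointed gallery, i.e.\ that $g_{i+1}BZ$ (defined by its outgoing edge $e_{i+1}$) coincides with the $L_B$-out-neighbor of $g_iBZ$ lying over $g_{i+1}\tilde B$. Writing $g_iBZ=(v_1,v_2,v_3)$ with outgoing edge $\{v_2,v_3\}=e_i$ and $g_{i+1}\tilde B=\{v_2,v_3,v_4\}$, the out-neighbor is $(v_2,v_3,v_4)$ with outgoing edge $\{v_3,v_4\}$; you need $e_{i+1}=\{v_3,v_4\}$ rather than $\{v_2,v_4\}$. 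This is exactly the content of the type-$1$ condition (the reflection pattern $t_1t_2t_3t_1\cdots$ as opposed to $t_1t_3t_2t_1\cdots$), and it is the one place the hypothesis is genuinely used. The paper's explicit formula $g_i^{-1}g_{i+1}=t_1\sigma$ establishes this in one line; you should either reproduce that computation or give a direct combinatorial argument that a type-$1$ strip turns consistently in the $L_B$-direction. Once this gap is filled your proof is complete, and arguably tidier than the paper's on the closing-up step.
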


Consequently, the map sending $\ka_\g(g\tilde B)$ to $\ka_\g(g_0BZ)$ is a length preserving bijection from the set of tailless type $1$ closed geodesic galleries to the set of closed pointed galleries in $X_\G$.

\begin{proof}
(1 $\Rightarrow$ 2)
Suppose $\ka_\g(g\tilde{B})$ is tailless. Let $g_0\tilde{B} \to \cdots \to g_{mn}\tilde{B}$ be a lifting of $\ka_{\g}(g\tilde{B})$ repeated $m$-times in $\B$.
Then there is a word $w= t_{i_1} \cdots t_{i_{mn}}$ so that
$g_j^{-1} g_k \in \tilde{B} t_{i_{j+1}}t_{i_{j+2}}  \cdots t_{i_k} \tilde{B}$
 for all $0\leq j < k \leq mn$. By the tailless assumption,
 $g_j\tilde{B} \to \cdots \to g_{j+n}\tilde{B}$ is geodesic of type 1 for $j = 0,..., n(m-1)$, so we have $t_{i_{(j+1)}}=t_{i_{j}+1}$ for $j = 0,..., mn-1$.
This shows that $w$ is a reduced word and $\ka_{\g}(g\tilde{B})$ repeated $m$-times is a type 1 geodesic gallery.
\\
(2 $\Rightarrow$ 3) Let $g\tilde{B}=g_0\tilde{B} \to \cdots \to g_{2n}\tilde{B}$ be a lifting of $\ka_{\g}(g\tilde{B})$ repeated twice.
Since it is a type 1 geodesic gallery, as noted before, we may assume that
$g_i \tilde{B} = g t_1 \cdots t_i \tilde{B}$ and $g_i= g t_1\cdots t_i \sigma^i$ for $i= 0,..., 2n$.
Then $g_i BZ$ is a pointed chamber of $g_i\tilde{B}$ and
$g_{i}^{-1} g_{i+1}= \sigma^{-i}t_{i+1}\sigma^{i+1}= t_1\sigma \in L_B$. Therefore
$g_0BZ \to \cdots \to  g_{2n}BZ$ is a pointed gallery. It remains to show that $\g g_0 BZ= g_n BZ$ so that $g_0BZ  \to \cdots \to g_{n}BZ$ is a lifting of a closed pointed gallery of $X_\G$.
From $\g g_0 \tilde{B}= g_n \tilde{B}$ and $\ord_\pi \det \G \subset 3\Z$ by assumption we conclude
$n \in 3\Z$ and $\g g_0 BZ= g_n \sigma^{i} BZ$ for some $i \in \{0, 1, 2\}$. Comparing determinants of both sides gives $i = 0$ since $\det \sigma = \pi$.
This proves $\g g_0 BZ= g_n BZ$.
\\
(3 $\Rightarrow$ 1)
 Let $g_0 BZ \to \cdots \to g_{2n}BZ$ be a lifting in $\B$ of the pointed gallery admitted by $\ka_{\g}(g\tilde{B})$ repeated twice. Thus $g_i^{-1}g_{i+1} \in L_B Z$ for $i=0,..., 2n-1$. Note that every gallery obtained by changing the starting chamber of $\ka_\g(g\tilde{B})$ has a lifting contained in $C: g_0 \tilde{B} \to \cdots \to g_{2n}\tilde{B}$, so it suffices to show that $C$ is a geodesic gallery.
This is because
$$g_0^{-1} g_{2n}= (g_0^{-1}g_1)\cdots(g_{2n-1}^{-1}g_{2n}) \in (L_B Z)^{2n} \subset \tilde{B}t_1\cdots t_{2n} \tilde{B} $$
and $t_1\cdots t_{2n}$ is a reduced word of length $2n$.
\end{proof}

\subsection{Characterizing closed pointed galleries in $X_\G$}\label{charclosedpointedgalleries}
 We begin by extracting information on the starting vertex and the type of a closed pointed gallery.

\begin{proposition} \label{charpointedgallery}
Let $\g \in \G$. Suppose $\ka_\g(gBZ)$ is a closed pointed gallery in $X_\G$ of length $n$. Then vertices $g \sigma^i KZ$ for $i = 0, 1, 2$ all lie in $\Delta_G([\g])$ defined by (\ref{deltaG}).
Moreover, $[\g]$ is of type $(0,n/2)$ for $n$ even, and $(1,(n-1)/2)$ for $n$ odd. In the latter case, $\g$ is ramified rank-one split.
\end{proposition}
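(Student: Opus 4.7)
The plan is to leverage the Iwahori--Hecke identity $L_B^n = L_{s^n}$ with $s := t_1\sigma$ together with the canonical algebraic length $L_A$ to determine the type of $[\g]$.

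First I would compute $s^n$ explicitly as a matrix. Since $s = \left(\begin{smallmatrix} 1 & & \\ & & 1 \\ & \pi & \end{smallmatrix}\right)$, induction yields $s^{2m} = \diag(1,\pi^m,\pi^m) \in T_{0,m}$ and $s^{2m+1} = \left(\begin{smallmatrix} 1 & & \\ & & \pi^m \\ & \pi^{m+1} & \end{smallmatrix}\right)$, which is $K$-equivalent to $\diag(1,\pi^m,\pi^{m+1}) \in T_{1,m}$. The hypothesis translates to $g^{-1}\g g \in Bs^nBZ \subset Ks^nKZ$, so $\ka_\g(gKZ)$ has type $(0,m)$ when $n = 2m$ and type $(1,m)$ when $n = 2m+1$. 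Because $\sigma^3 = \pi I \in KZ$ yields $\sigma B \sigma^{-1} = B$, we also have $(g\sigma^i)^{-1}\g(g\sigma^i) \in B(\sigma^{-i}s^n\sigma^i)BZ$, and a direct check shows $\sigma^{-i}s^n\sigma^i$ lies in the same $KZ$-double coset as $s^n$. Hence each $\ka_\g(g\sigma^i KZ)$ for $i = 1,2$ has the same type as $\ka_\g(gKZ)$.

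The second step promotes these cycle-level facts to statements about $[\g]$ by iterating the closed gallery. Repetition gives $g^{-1}\g^k g \in Bs^{kn}BZ$, so $l_A(g^{-1}\g^k g) = l_A(s^{kn}) = kn$ in both parities, whence $L_A(\g) = n$. The chain
\[
n = L_A(\g) = L_A(r_\g) \le l_A(r_\g) = l_A([\g]) \le l_A(\ka_\g(gKZ)) = n
\]
from Proposition \ref{canonicallength} and Theorem \ref{charlAandlG} then forces $l_A([\g]) = n$. An identical argument applied to $\g^{-1}$, using $l_A(s^{-kn}) = kn/2$ when $kn$ is even and $(kn+3)/2$ when $kn$ is odd (the extra $3/2$ coming from the $K$-reduction of $s^{-kn}$), yields $L_A(\g^{-1}) = n/2$. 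Writing $[\g]$ of type $(n',m')$, so that $[\g^{-1}]$ has type $(m',n')$, produces the conditions $n'+2m' = n$ and $L_A(\g^{-1}) \le m'+2n' = l_A([\g^{-1}]) \le l_A(s^{-n})$.

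For $n = 2m$ even, $L_A(\g^{-1}) = m = l_A(s^{-n})$, so $l_A([\g^{-1}]) = m$, and the pair $n'+2m'=2m$, $m'+2n'=m$ solves uniquely to $(n',m') = (0,m) = (0,n/2)$. For $n = 2m+1$ odd, $L_A(\g^{-1}) = m + 1/2$ is a \emph{half-integer}; but $L_A(\g^{-1}) = l_A(d_{\g^{-1}})$ over the splitting field $L$, and the image of $\ord_\pi \circ N_{L/F}$ is $2\Z$ when $L$ is unramified and $\Z$ when $L$ is ramified, so $l_A$ assumes half-integer values only in the ramified quadratic case. Hence $\g$ must be ramified rank-one split. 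The bounds $m+1 \le l_A([\g^{-1}]) \le l_A(s^{-n}) = m+2$ combined with $n'+2m' = 2m+1$ leave $(n',m') = (1,m)$ as the unique integer solution, giving type $(1,(n-1)/2)$. Finally, since each $\ka_\g(g\sigma^i KZ)$ shares the type of $[\g]$, Proposition \ref{geometricalminimalequaltothesametype} gives geometric minimality and hence $g\sigma^i KZ \in \Delta_G([\g])$ for $i=0,1,2$. The main obstacle is the parity dichotomy in the odd-$n$ case: pinning down that half-integer values of $L_A$ single out the ramified rank-one split type, which depends on the behavior of $l_A$ over a quadratic extension through the norm map.
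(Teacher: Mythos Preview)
Your proof is correct and follows essentially the same route as the paper's: compute $s^n$ explicitly, read off the type of $\ka_\g(g\sigma^iKZ)$ from the $K$--double coset of $\sigma^{-i}s^n\sigma^i$, use iteration to obtain $L_A(\g)=n$ and $L_A(\g^{-1})=n/2$, then invoke the inequality chain \eqref{lengthcomparision} together with the type of $[\g^{-1}]$ to pin down $(n',m')$, with the half-integer value of $L_A(\g^{-1})$ forcing the ramified rank-one split case when $n$ is odd. The only cosmetic differences are that the paper reaches $L_A(\g^{-1})=n/2$ by noting $g^{-1}\g^{2k}g\in T_{0,nk}$ and inverting, whereas you compute $l_A(s^{-kn})$ directly, and you spell out the norm-image reason why a half-integer canonical length singles out the ramified quadratic splitting field, which the paper leaves implicit.
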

\begin{proof}
Observe that
$$ (t_1 \sigma)^{2m} = \begin{pmatrix} 1 & & \\ & \pi^m &  \\ &  & \pi^m \end{pmatrix} \quad \mbox{and}\quad
(t_1\sigma)^{2m+1}= \begin{pmatrix} 1 & & \\ & & \pi^m \\ & \pi^{m+1} & \end{pmatrix}.$$
Since $\ka_\g(gBZ)$ is a pointed gallery of length $n$, we have
$ g^{-1} \g g \in (L_B)^{n} \subset K(t_1\sigma)^n KZ$.
 To find the type of $\kappa_\g(g\sigma^i KZ)$,
write $\sigma =\left(\begin{smallmatrix} 1 & & \\ & 1 & \\ & & \pi \end{smallmatrix}\right)\left(\begin{smallmatrix}  & 1 & \\ &  & 1\\ 1 & &  \end{smallmatrix}\right).$
Using the fact $\sigma B = B\sigma$, we have
$$ \sigma^{-1}g^{-1} \g g\sigma \in \sigma^{-1} (L_B)^n  \sigma =
B \sigma^{-1} (t_1\sigma)^n \sigma BZ \subset K \left(\begin{smallmatrix} 1 & & \\ & 1 & \\ & & \pi^{-1} \end{smallmatrix}\right)  (t_1\sigma)^n \left(\begin{smallmatrix} 1 & & \\ & 1 & \\ & & \pi \end{smallmatrix}\right) KZ = K (t_1\sigma)^n KZ.$$
By the same argument, we also have $\sigma g^{-1}  \g g\sigma^{-1} \in K(t_1 \sigma)^n KZ.$
Therefore, if $n=2m$, $\ka_\g(g\sigma^i KZ)$ has type $(0,m)$; if $n=2m+1$,  $\ka_\g(g \sigma^i KZ)$ has type $(1,m)$ for all $i$.
It remains to show that $[\g]$ has the same type as $\ka_\g(g\sigma^i KZ)$, so that they have the same geometric length.

Since $ g^{-1} \g^{2k} g \subset K(t_1 \sigma)^{2nk} K Z= T_{0,nk}$, we have, by Proposition \ref{canonicallength},
$$ L_A(\gamma) = L_A(g^{-1} \gamma g )= \lim_{k \to \infty} \frac{1}{k} l_A(g^{-1} \gamma^k g) = \lim_{2k \to \infty} \frac{2nk}{2k}=n.$$
The same argument gives $L_A(\g^{-1})=\frac{n}{2}$.

When $n=2m$, we have $L_A(\g)=l_A(\ka_\g(gKZ))=2m$ and  $L_A(\g^{-1})=l_A(\ka_{\g^{-1}}(gKZ))=m$. By Corollary \ref{canonicallengthandtype}, $[\g]$ and $\ka_\g(gKZ)$
have the same type, which is $(0,m)$.

When $n=2m+1$, we have $L_A(\g^{-1})=\frac{2m+1}{2}$, which implies that $r_\g^{-1}$ is ramified rank-one split (and so is $r_\g$).
Now suppose $[\g]$ is of type $(i,j)$. Applying Equation (\ref{lengthcomparision}) to $g^{-1} \g g$ and $g^{-1} \g^{-1} g$, we obtain
$$ i+2j = 2m+1 \qquad \mbox{and} \qquad m+\frac{1}{2} \leq 2i+j \leq m+2.$$
It is easy to see that $(i,j)=(1,m)$ is the only non-negative integral solution.
\end{proof}

We now show that the conditions in the above proposition characterize closed pointed galleries.

\begin{proposition}
Suppose $\g \in \G$ satisfies either (1) $[\g]$ is of type $(0,n)$, or (2) $\g$ is ramified rank-one split and $[\g]$ is of type $(1,n)$.
If the three vertices $g\sigma^i KZ$ of the chamber $g\tilde B$ all lie in $\Delta_G([\g])$, then there is a unique $i \in \{0,1,2\}$ such that $\kappa_\g(g\sigma^iBZ)$ is a closed pointed gallery.
\end{proposition}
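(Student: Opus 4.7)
My plan is to convert the statement into membership in specific $B$-double cosets, then argue via the Iwahori-Bruhat decomposition for uniqueness and a geometric/combinatorial case-check for existence. First, by replacing $\g$ with $g^{-1}\g g$ (the problem is invariant under left translation), I reduce to $g = I$. The conclusion becomes: there is a unique $i \in \{0,1,2\}$ with $\sigma^{-i}\g\sigma^i \in B(t_1\sigma)^N BZ$, where $N := l_A([\g])$ equals $2n$ in case (1) and $2n+1$ in case (2). A direct matrix check gives $\sigma B \sigma^{-1} = B$, so the condition is equivalent to $\g \in B\tilde d_i BZ$ with $\tilde d_i := \sigma^i(t_1\sigma)^N\sigma^{-i}$.

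For uniqueness I will invoke the Iwahori-Bruhat decomposition by the extended affine Weyl group $\tilde W = X_*(T) \rtimes W_0$. The three elements $\tilde d_0,\tilde d_1,\tilde d_2$ lie in $N_G(T)$ and, modulo the central lattice $\mathbb{Z}(1,1,1) \subset X_*(T)$, have translation parts that are three distinct cyclic rotations of a single coweight (for $n \ge 1$); hence they represent distinct classes in $\tilde W/Z$ and the three cosets $B\tilde d_i BZ$ are pairwise disjoint.

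For existence, the hypothesis and Proposition~\ref{geometricalminimalequaltothesametype} give $\sigma^{-i}\g\sigma^i \in T_{0,n}$ (case (1)) or $T_{1,n}$ (case (2)) for each $i$. Passing to an apartment $\mathbb{A}$ containing both $\tilde B$ and $\g\tilde B$, the three $1$-geodesics from the vertices of $\tilde B$ to their $\g$-images sit as parallel paths of the correct type in $\mathbb{A}$ whose thickening is a strip of $N$ chambers forming a unique gallery $\tilde B = C_0, C_1,\ldots, C_N = \g\tilde B$. The strip has a distinguished direction of flow; the $L_B$-combinatorics on pointed chambers then shows that exactly one pointing of $\tilde B$ places the starting vertex opposite the first edge the gallery crosses. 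After $N$ steps the starting vertex has shifted by $N \bmod 3$ positions; since $\ord_\pi \det \G \subseteq 3\Z$ forces $N \in 3\Z$, the final pointing matches $\g$ applied to the initial one, yielding the desired closed pointed gallery in $X_\G$.

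The main obstacle I anticipate is making the existence argument rigorous in case (2). Writing $\g \in BwBZ$ for a unique $w = (\mu,u) \in X_*(T)\rtimes W_0$ modulo scalars, one must rule out spurious pairs $(\mu,u)$ that satisfy the three $K$-double-coset conditions but correspond to $w \notin \{\tilde d_0,\tilde d_1,\tilde d_2\}$. The rank-one split hypothesis is crucial: it forces $\g$'s Iwahori-Bruhat element to carry a nontrivial Weyl component coming from the Galois involution on its pair of quadratic eigenvalues. Combined with the explicit $\sigma$-conjugation formulas on $\tilde W$ and the divisibility $N \in 3\Z$, a finite case analysis over $u \in W_0 = S_3$ pins down $w$ to lie in $\{\tilde d_0,\tilde d_1,\tilde d_2\}$ modulo scalars, completing existence.
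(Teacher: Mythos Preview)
Your reduction to $g = I$ and your uniqueness argument via disjointness of the three Iwahori double cosets $B\tilde d_i BZ$ are fine and match the paper's use of Proposition~\ref{gallerytaillesscriterion}. The existence argument, however, has a genuine gap in both cases, and in case~(2) your proposed fix misses the key ingredient.

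In case~(1) your paragraph asserts that the three $1$-geodesics of type~$(0,n)$ from the vertices of $\tilde B$ to their $\g$-images ``sit as parallel paths \ldots\ whose thickening is a strip.'' This is the conclusion, not a step toward it. Writing $\g = Msb$ with $M \in D$, $s \in S_3$, $b \in BZ$ in the chosen apartment, the three $K$-double-coset conditions only pin down $M$ (up to a cyclic shift); the question is precisely whether $s = \mathrm{id}$. If $s$ were a nontrivial permutation the three displacement vectors would not be parallel and no strip would form. The paper checks by hand, using the explicit $\sigma$-conjugation of $Ms$, that the simultaneous membership $\sigma^{-i}Msb\sigma^i \in T_{0,n}$ for $i = -1,0,1$ forces the first and third rows of $s$, hence $s = \mathrm{id}$. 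Your outline gestures at this (``a finite case analysis over $u \in W_0$'') but only after the geometric paragraph has already assumed the answer; in fact that case analysis \emph{is} the proof of case~(1), and you should carry it out.

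In case~(2) the three $T_{1,n}$ constraints leave three candidates for $Ms$: two diagonal and one with $s$ the transposition $(2\,3)$. Your proposed mechanism --- that the Galois involution on the quadratic eigenvalues forces a nontrivial Weyl part in the Iwahori--Bruhat class --- is not something you have established, and there is no direct link of this sort between the eigenvalue field and the Weyl component of $BwBZ$. The paper eliminates the two diagonal candidates by an entirely different device: it invokes Proposition~\ref{Deltag2}, namely $\Delta_G([\g]) \subseteq \Delta_G([\g^2])$, together with the fact (from the Remark after Theorem~\ref{rankoneminlength}) that for $\g$ ramified rank-one split of type $(1,n)$ one has $[\g^2]$ of type $(0,2n+1)$. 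Thus $(Msb)^2$ must lie in $T_{0,2n+1}$, whereas for the two diagonal candidates a direct computation gives $(Msb)^2 \in T_{2,2n}$. Without this passage to $\g^2$, your case analysis will not close; you should incorporate it.
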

\begin{proof}

The uniqueness of $i$ follows from the third statement in Proposition \ref{gallerytaillesscriterion}; we shall show it exists. Denote by $g \A$ the apartment containing the two chambers $g \tilde{B}$ and $\g g \tilde{B}$. {Replacing $g$ by $gb$ for some $b \in \tilde B$ if necessary,} 
we may assume that $\A$ is the standard apartment whose pointed chambers are represented by $DS_3 \tilde{B}$, where $D$ is the group of
diagonal matrices in $G$ and $S_3$ is the subgroup of permutation
matrices in $G$. Write $g^{-1} \g g = M s b$ for some $M \in D$,
$s \in S_3$ and $b \in BZ$.
Since the vertices of $gBZ$ are in $\Delta_G([\g])$, by Proposition \ref{geometricalminimalequaltothesametype},
$\ka_{\g}(g\sigma^iKZ)$ has the same type as $[\g]$ for all $i$.

Case (I). $[\g]$ has type $(0,n)$. Then
$g^{-1}\g g$, $\sigma^{-1}g^{-1}\g g \sigma$ and $\sigma g^{-1}\g g
\sigma^{-1}$ all lie in $T_{0,n}$. In this case,
$$M \in \left\{ \left(\begin{smallmatrix} \pi^n & & \\ & \pi^n & \\ & & 1 \end{smallmatrix}\right),
\left(\begin{smallmatrix} \pi^n & & \\ & 1 & \\ & & \pi^n \end{smallmatrix}\right),
\left(\begin{smallmatrix} 1 & & \\ & \pi^n & \\ & & \pi^n \end{smallmatrix}\right)\right\}
=\left\{
\sigma^{i} \left(\begin{smallmatrix} 1 & & \\ & \pi^n & \\ & & \pi^n \end{smallmatrix}\right) \sigma^{-i} ~:~i=0,1,2 \right\}. $$

\noindent In other words, $M= \sigma^{i} \left(\begin{smallmatrix} 1 & & \\ & \pi^n & \\ & & \pi^n \end{smallmatrix}\right) \sigma^{-i} = \sigma^{i}(t_1\sigma)^{2n}\sigma^{-i}$ for some $i$. We shall show that $s$ is the identity matrix. If so,  then, since $B\sigma = \sigma B$, we have $b\sigma^i = \sigma^i b'$ with $b' \in B$ and
$$ (g \sigma^i)^{-1} \g (g \sigma^i) = \sigma^{-i}Mb \sigma^{i} = \sigma^{-i}M \sigma^{i} b' \in (L_B)^{2n}.$$
Thus $\kappa_\g(g\sigma^iBZ)$ is a closed pointed gallery.

It suffices to consider the case $M = \left(\begin{smallmatrix} 1 & & \\ & \pi^n & \\ & & \pi^n \end{smallmatrix}\right)$ as the other cases are similar.
To determine $s$, write $\sigma = \left(\begin{smallmatrix} 1 & & \\ & 1 & \\ & & \pi \end{smallmatrix}\right) s_3$ with $s_3 \in S_3$.
Observe that
\begin{eqnarray*}
\sigma^{-1}g^{-1}\g g \sigma &=& \sigma^{-1} Msb \sigma = \sigma^{-1} Ms \sigma b''\\
&=& s_3^{-1} \left(\begin{smallmatrix} 1 & & \\ & 1 & \\ & & \pi^{-1} \end{smallmatrix}\right)
\left(\begin{smallmatrix} 1 & & \\ & \pi^n & \\ & & \pi^n \end{smallmatrix}\right) s \left(\begin{smallmatrix} 1 & & \\ & 1 & \\ & & \pi \end{smallmatrix}\right) s_3 b''
\end{eqnarray*} and $s \left(\begin{smallmatrix} 1 & & \\ & 1 & \\ & & \pi \end{smallmatrix}\right)$ is $ \left(\begin{smallmatrix} \pi & & \\ & 1 & \\ & & 1 \end{smallmatrix}\right)s$,
$ \left(\begin{smallmatrix} 1 & & \\ & \pi & \\ & & 1 \end{smallmatrix}\right) s$,
or $ \left(\begin{smallmatrix} 1 & & \\ & 1 & \\ & & \pi \end{smallmatrix}\right) s$ according as the
first, second, or third row of $s$ is $(0 ~0 ~1)$. In order that
$\sigma^{-1}g^{-1}\g g \sigma \in T_{0,n}$, the third row of $s$ must be $(0 ~0 ~1)$. Similarly, $\sigma g^{-1}\g g \sigma^{-1} \in T_{0,n}$ implies the first
row of $s$ should be $(1 ~0 ~0)$. Therefore $s$ is the identity matrix.

Case (II) $\g$ is ramified rank-one split and $[\g]$ has type $(1,n)$. Then
$g^{-1}\g g$, $\sigma^{-1}g^{-1}\g g \sigma$ and $\sigma g^{-1}\g g
\sigma^{-1}$ all lie in $T_{1,n}$. In this case,
$$M \in \left\{ \sigma^{i} \left(\begin{smallmatrix} 1 & & \\ & \pi^n & \\ & & \pi^{n+1} \end{smallmatrix}\right) \sigma^{-i},
\sigma^{i} \left(\begin{smallmatrix} 1 & & \\ & \pi^{n+1} & \\ & & \pi^n \end{smallmatrix}\right) \sigma^{-i} ~:~ i=0,1,2 \right\}. $$

\noindent As before, it suffices to consider the cases $M = \left(\begin{smallmatrix} 1 & & \\ & \pi^n & \\ & & \pi^{n+1} \end{smallmatrix}\right)$ or $\left(\begin{smallmatrix} 1 & & \\ & \pi^{n+1} & \\ & & \pi^n \end{smallmatrix}\right)$.

A similar argument as in case (I) yields
$$ Ms = \left(\begin{smallmatrix} 1 & & \\ & \pi^n & \\ & & \pi^{n+1} \end{smallmatrix}\right), \left(\begin{smallmatrix} 1 & & \\ & & \pi^n \\ & \pi^{n+1} & \end{smallmatrix}\right), \mbox{or } \left(\begin{smallmatrix} 1 & & \\ & \pi^{n+1} & \\ & & \pi^{n} \end{smallmatrix}\right).$$
Observe that $[\g^2]$ has type $(0,2n+1)$. Since $\Delta_G([\g]) \subset \Delta_G([\g^2])$
by Proposition \ref{Deltag2}, we have $(g^{-1} \g g)^2=(Msb)^2 \in T_{0,2n+1}$.
On the other hand, if $Ms = \left(\begin{smallmatrix} 1 & & \\ & \pi^n & \\ & & \pi^{n+1} \end{smallmatrix}\right)$ or $\left(\begin{smallmatrix} 1 & & \\ & \pi^{n+1} & \\ & & \pi^{n} \end{smallmatrix}\right)$, then a direct computation shows
$(Msb)^2 \in T_{2,2n}$, which is a contradiction. Therefore
$g^{-1} \g g = Msb= \left(\begin{smallmatrix} 1 & & \\ & & \pi^n \\ & \pi^{n+1} & \end{smallmatrix}\right)b \in L_B^{2n+1}$.
Thus $\kappa_\g(g\sigma^iBZ)$ is a closed pointed gallery for some $i$.
\end{proof}

The above two propositions and Proposition \ref{gallerytaillesscriterion} together imply
\begin{theorem} \label{numberoftaillessgallery}
Given $\g \in \G$, the number of closed pointed galleries in $X_\G$ of the form $\ka_\g(gBZ)$ is equal to the number of chambers with vertices $P_\g gKZ$,
where $gKZ \in C_{P_\g^{-1} \G P_\g}(r_\g)\backslash \Delta_G([\g])$.
\end{theorem}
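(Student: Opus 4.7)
The plan is that this theorem follows by directly combining Proposition~\ref{gallerytaillesscriterion} with the two characterization propositions of \S\ref{charclosedpointedgalleries} (Proposition~\ref{charpointedgallery} and its converse).

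First, by Proposition~\ref{gallerytaillesscriterion}, the forgetful map $\ka_\g(gBZ)\mapsto \ka_\g(g\tilde B)$ is a length-preserving bijection from closed pointed galleries of the form $\ka_\g(gBZ)$ onto tailless type~$1$ closed geodesic galleries $\ka_\g(g\tilde B)$ in $X_\G$. Hence it suffices to count the latter. Writing $g = P_\g g'$, and using the identity $C_\G(\g)P_\g = P_\g\, C_{P_\g^{-1}\G P_\g}(r_\g)$ (so that the gallery equivalence relation $\ka_\g(g_1\tilde B) = \ka_\g(g_2\tilde B)$ in $X_\G$ translates to $g_2'\tilde B = c\, g_1'\tilde B$ for some $c\in C_{P_\g^{-1}\G P_\g}(r_\g)$), the galleries under consideration are indexed by chambers $P_\g g'\tilde B$ with $g'\in C_{P_\g^{-1}\G P_\g}(r_\g)\backslash G/\tilde B$.

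Next I would apply the two characterization propositions. Proposition~\ref{charpointedgallery} shows that if $\ka_\g(P_\g g'\sigma^i BZ)$ is a closed pointed gallery for some $i\in\{0,1,2\}$, then all three vertices $P_\g g'\sigma^j KZ$, $j=0,1,2$, of the underlying chamber lie in $\Delta_G([\g])$, and the type of $[\g]$ must be $(0,n)$ or $(1,n)$ (the latter with $\g$ ramified rank-one split). The converse proposition shows that, under these type conditions, any chamber $P_\g g'\tilde B$ with all three vertices in $\Delta_G([\g])$ admits a unique $i\in\{0,1,2\}$ for which $\ka_\g(P_\g g'\sigma^i BZ)$ is a closed pointed gallery. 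Together, these produce a bijection between closed pointed galleries of the form $\ka_\g(gBZ)$ and chambers in $X_\G$ whose three vertices all lie in $\Delta_G([\g])$. Since $\Delta_G([\g])$ is stable under left multiplication by $C_{P_\g^{-1}\G P_\g}(r_\g)$, these chambers are parametrized exactly by $gKZ\in C_{P_\g^{-1}\G P_\g}(r_\g)\backslash \Delta_G([\g])$ via $g\mapsto P_\g g\tilde B$, yielding the theorem.

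The main obstacle is the double-coset bookkeeping in the reparametrization step: one must reconcile the ``three vertices'' condition (which lives on the chamber $g\tilde B$, i.e.\ on $G/\tilde B$) with the ``choice of pointing'' (which lives on $G/BZ$), and check that the uniqueness of $i$ in the converse proposition exactly compensates for the three-to-one cover $G/BZ \to G/\tilde B$, so that counting chambers (rather than pointed chambers) gives the correct count of pointed galleries. Once this compatibility is verified, the rest of the argument is a direct assembly of the three preceding results.
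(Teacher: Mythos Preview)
Your proposal is correct and follows essentially the same approach as the paper: the paper's proof is literally the single sentence ``The above two propositions and Proposition~\ref{gallerytaillesscriterion} together imply'' the theorem, and you have correctly unpacked that sentence. Your careful handling of the double-coset bookkeeping and the three-to-one cover $G/BZ\to G/\tilde B$ (with the uniqueness of $i$ doing the compensating) is exactly the verification the paper leaves implicit.
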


\section{Chamber zeta function of $X_\G$}
\subsection{Type $1$ chamber zeta function of $X_\G$}
Two closed galleries in $X_\G$ are called equivalent if one is obtained from the other
by changing the starting chamber.
A closed gallery is called {\it primitive}
if it is not a repetition of another closed gallery of shorter
length. For a primitive tailless closed
gallery $C$ of length $n$, denote by $[C]$ the collection of the $n$ closed
galleries equivalent to $C$.

The type $1$ chamber zeta function of $X_\G$ is defined as an Euler
product:
\begin{eqnarray}\label{defineZ2}
Z_{2,1}(X_\G, u) = \prod_{[C]} (1 -
u^{l(C)})^{-1},
\end{eqnarray}
where $[C]$ runs through the equivalence classes of
primitive, tailless, type $1$ closed galleries in $X_\G$. Let $M_n(X_\G)$ denote the number of tailless, type $1$ closed galleries in $X_\G$ of length $n$.

\begin{proposition}\label{rationalZ2} The type $1$ chamber zeta
function of $X_\G$ is a rational function with the following expressions:
\begin{eqnarray}\label{L2asdetLB}
Z_{2,1}(X_\G, u) = \exp(\sum_{n \ge 1} \frac{M_n(X_\G)}{n} u^n) = \frac{1}{\det (I - L_B u)}.
\end{eqnarray}
\end{proposition}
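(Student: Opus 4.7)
The plan is to mirror the proof of Proposition \ref{Z1andLE} almost verbatim, with $L_E$ replaced by $L_B$ and tailless type $1$ geodesic cycles replaced by tailless type $1$ closed galleries; the only genuinely new input needed is the translation between the gallery count $M_n(X_\G)$ and the trace $\Tr L_B^n$, which is supplied by the results of \S \ref{galleries}.

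First I would take the logarithmic derivative of the Euler product (\ref{defineZ2}). Expanding $\log(1-u^{l(C)})^{-1} = \sum_{m\ge 1} u^{l(C)m}/m$ and differentiating gives
\begin{equation*}
u\frac{d}{du}\log Z_{2,1}(X_\G,u) = \sum_{[C]}\sum_{m\ge 1} l([C])\, u^{l([C])m} = \sum_{C}\sum_{m\ge 1} u^{l(C)m},
\end{equation*}
where the last equality uses the fact that each equivalence class $[C]$ of a primitive tailless type $1$ closed gallery contains exactly $l([C])$ representatives, and $C$ now ranges over all such primitive tailless galleries. Since every tailless type $1$ closed gallery is uniquely the $m$-fold repetition of such a primitive $C$ (and such repetitions remain tailless type $1$ by Proposition \ref{gallerytaillesscriterion}), the right-hand side collapses to $\sum_{n\ge 1} M_n(X_\G)u^n$.

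Next I would identify $M_n(X_\G)$ with $\Tr L_B^n$. By Proposition \ref{gallerytaillesscriterion}, the assignment $\ka_\g(g\tilde B)\mapsto \ka_\g(g_0 BZ)$ is a length-preserving bijection between tailless type $1$ closed geodesic galleries in $X_\G$ and closed pointed galleries in $X_\G$. On the other hand, a closed pointed gallery of length $n$ based at $\G g BZ$ is, by the very definition of the out-neighbor relation, a closed walk of length $n$ in the graph on $\G\backslash G/BZ$ whose (weighted) adjacency matrix is $L_B$. Thus the number of closed pointed galleries in $X_\G$ of length $n$ equals $\Tr L_B^n$, so $M_n(X_\G)=\Tr L_B^n$.

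Combining the two steps gives
\begin{equation*}
u\frac{d}{du}\log Z_{2,1}(X_\G,u) = \sum_{n\ge 1}\Tr(L_B^n)\, u^n = u\frac{d}{du}\log\frac{1}{\det(I-L_B u)},
\end{equation*}
where the last equality is the standard identity (Lemma 3 of \cite{ST}) already invoked in the proof of Proposition \ref{Z1andLE}. Since all three formal power series in (\ref{L2asdetLB}) have constant term $1$, integrating and exponentiating yields the proposition. There is no real obstacle here beyond ensuring that Proposition \ref{gallerytaillesscriterion} is applied to both directions of the correspondence, i.e., that every closed pointed gallery arises from a unique tailless type $1$ closed gallery and vice versa; this is exactly the content of the bijection stated immediately after Proposition \ref{gallerytaillesscriterion}.
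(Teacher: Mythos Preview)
Your proof is correct and follows essentially the same approach as the paper: identify $M_n(X_\G)$ with $\Tr L_B^n$ via the bijection in Proposition \ref{gallerytaillesscriterion}, then invoke the computation already carried out in the proof of Proposition \ref{Z1andLE}. The paper's proof is more terse (it simply cites Proposition \ref{gallerytaillesscriterion} and refers back to Proposition \ref{Z1andLE}), but the content is identical.
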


\begin{proof}
For $n \ge 1$, $\Tr L_B^{n}$ on $X_\G$ counts the number of closed pointed galleries of length $n$, which is equal to $M_n(X_\G)$ by Proposition \ref{gallerytaillesscriterion}.  The equalities follow from the same argument as the proof of Proposition \ref{Z1andLE}.
\end{proof}

\subsection{Comparing chamber zeta function and edge zeta function}

In this subsection we give an explicit formula for $M_n(X_\G)$, the number of closed pointed galleries in $X_\G$ of length $n$, similar to Theorem \ref{logedgezeta}. This is achieved by computing the difference between logarithmic derivatives of edge zeta function and chamber zeta function and applying Theorem \ref{logedgezeta}.

\begin{theorem}\label{compareZ2andZ1}
\begin{eqnarray*}
& &  u\frac{d}{du}\log Z_{1,2}(X_\G, u) - u\frac{d}{du}\log Z_{2,1}(X_\G, -u) \\
&=&  \sum_{n \ge 1}\big(\sum_{\substack{ \g \in [\G] ~{\rm irregular},\\ ~[\g] {\rm ~of ~type} ~(0, ~n)}}  -(q-1)\vol([\g]) u^{l_A([\g])}
+ \sum_{ \substack{\g \in [\G] ~{\rm unramified ~rank-one
~split ,}\\ ~[\g] {\rm~of ~type} ~(0, ~n)}} 2\vol([\g]) u^{l_A([\g])} \\
&~&~~~~~~~~~ + \sum_{\substack{\g \in [\G] ~{\rm ramified ~rank-one ~split,}\\  ~[\g] {\rm~of ~type} ~(0, ~n) ~{\rm or} ~(1,n)}}
\vol([\g])
u^{l_A([\g])}\big),
\end{eqnarray*}
where $\rm{vol}([\g])$ is defined by (\ref{fundamentaldomain'}).
\end{theorem}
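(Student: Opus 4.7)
The plan is to expand both terms on the left-hand side as formal power series in $u$, using results established earlier in the paper, and then match coefficients. Theorem \ref{logedgezeta} with $i=2$ gives
$$u\frac{d}{du}\log Z_{1,2}(X_\G,u)\;=\;2\!\!\sum_{\substack{\g\in[\G]\\ [\g]~\text{of type 1}}}\!\!\vol([\g])\,\omega_{[\g]}\,u^{2l_A([\g])},$$
while Proposition \ref{rationalZ2} gives $u\frac{d}{du}\log Z_{2,1}(X_\G,-u)=\sum_{n\ge 1}(-1)^n M_n(X_\G)\,u^n$. The task therefore reduces to re-expressing each $M_n$ as a conjugacy-class sum compatible with the first expansion. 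Theorem \ref{numberoftaillessgallery} realises $M_n(X_\G)$ as a sum, over $[\g]\in[\G]$, of the number of chambers in $X_\G$ whose three vertices all lie in $\Delta_G([\g])$ modulo the centralizer action; Proposition \ref{charpointedgallery} further restricts the contributing classes to those of type $(0,m)$ (when $n=2m$) or ramified rank-one split of type $(1,m)$ (when $n=2m+1$).

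The next step is a case-by-case chamber count. I would use the explicit descriptions of $\Delta_G([\g])$ supplied by Corollary \ref{primitivesplit} (for split and irregular $\g$) and Proposition \ref{rankonenumberofalgtailless} (for rank-one split $\g$), transported from type $(n,0)$ to type $(0,n)$ via the involution $\g\leftrightarrow\g^{-1}$, which swaps types, preserves volumes, and, by Proposition \ref{algminequalgeommin}, preserves the tailless-cycle count. Substituting these counts into the expansion of $M_n(X_\G)$ and forming the difference with the $Z_{1,2}$ expansion, the split contributions cancel exactly, reflecting the reversal bijection between tailless type-$1$ and tailless type-$2$ cycles. What remains is a sum over irregular, unramified rank-one split, and ramified rank-one split classes, with coefficients $-(q-1)$, $2$, and $1$ respectively, which is the content of the theorem's right-hand side; the odd-length contribution comes purely from $M_{2m+1}$, and the sign $(-1)^{2m+1}=-1$ flips the overall sign to yield the positive coefficient in the $(1,m)$-summand.

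The principal obstacle is the detailed chamber count for non-split $\g$. For an irregular $\g$ of type $(0,m)$, the enlarged centralizer $\GL_2(F)\times F^\times$ produces an extra $\mathbb P^1(\mathbb F_q)$-worth of chambers at each geometrically minimal vertex, which is what generates the factor of $q$ in the chamber count and, combined with the $2\omega_{[\g]}=2$ coming from the edge side, yields the coefficient $-(q-1)\vol([\g])$. For rank-one split $\g$ one must further distinguish the ramified and unramified cases, using the two-line (resp.\ twisted one-line) structure of $C_G(r_\g)K/KZ$ described in \S\ref{centralizers}, in order to account for the "$-2$" and "$-1$" corrections in Theorem~B's formula for $M_{2m}$ and to verify that these line up with the coefficients $2$ and $1$ on the stated right-hand side. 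Once these counts are established, the remaining identity reduces to an elementary algebraic simplification.
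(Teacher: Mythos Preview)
Your proposal is correct and follows essentially the same route as the paper: both expand the two logarithmic derivatives, invoke Theorem~\ref{numberoftaillessgallery} together with Proposition~\ref{charpointedgallery} to reduce the chamber side to a conjugacy-class sum, pass from type $(0,n)$ to type $(n,0)$ via $\g\mapsto\g^{-1}$, and then carry out a case-by-case comparison of the chamber count $N_B(\g)$ against $2N_K(\g)=2\vol([\g])\omega_{[\g]}$ using the explicit descriptions of $\Delta_A([\g])$ in Corollary~\ref{primitivesplit} and Proposition~\ref{rankonenumberofalgtailless}. One small correction: in the irregular case the chamber count per edge is $q+1$ (all chambers on a given edge have their third vertex in $\Delta_A([\g])$), not $q$; the coefficient $-(q-1)$ then arises as $2-(q+1)$.
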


\begin{proof}  Combining Theorem \ref{numberoftaillessgallery}
and Proposition \ref{rationalZ2}, we have
\begin{eqnarray*}
& & u\frac{d}{du}\log Z_{2,1}(X_\G, -u) \\
&=& \sum_{n \ge 1} ~\big(\sum_{\g \in [\G], ~[\g] ~{\rm of ~type} ~(0, ~n)} N_B(\g) u^{2n} - \sum_{ \substack{ \g \in [\G] ~{\rm ramified ~rank-one ~split,}
  \\~[\g] {\rm ~of ~type} ~(1, ~n)}} N_B(\g)u^{2n+1}\big),
\end{eqnarray*}
where $N_B(\g)$ is the number of chambers with vertices $P_\g gKZ$,
where $gKZ \in C_{P_\g^{-1} \G P_\g}(r_\g)\backslash \Delta_G([\g])$. From the definition of $\Delta_G([\g])$, it is clear that $N_B(\g)=N_B(\g^{-1})$ so that
\begin{eqnarray*}
& & u\frac{d}{du}\log Z_{2,1}(X_\G, -u)\\
&=& \sum_{n \ge 1} ~\big(\sum_{\g \in [\G], ~[\g] ~{\rm
 of ~type} ~(n, ~0)} N_B(\g) u^{2n} - \sum_{ \substack{ \g \in [\G] ~{\rm ramified ~rank-one~split, }\\
 ~[\g] {\rm  ~of ~type} ~(n, ~1)}} N_B(\g)u^{2n+1)}\big).
\end{eqnarray*}

 On the other hand, for type $2$ edge zeta function we have
\begin{eqnarray*}
u\frac{d}{du}\log Z_{1,2}(X_\G, u) &=& u\frac{d}{du}\log Z_{1,1}(X_\G, u^2) = \sum_{\g \in [\G]}
~\sum_{\ka_\g(gKZ) \text{~tailless, type $1$}} 2u^{2l_A(\ka_\g(gKZ))}\\
&=& \sum_{n \ge 1} ~\sum_{\g \in [\G], ~[\g] ~{\rm of ~type} ~(n,
~0)} 2N_K(\g) u^{2l_A([\g])},
\end{eqnarray*}
where $N_K(\g) = \vol([\g]) \omega_{[\g]}$ is the number of tailless type $1$ cycles in $[\g]$ (cf. Theorem \ref{logedgezeta}).
We shall compare this with the number $N_B(\g)$.
Recall that for $[\g]$ of type $1$, we have $\Delta_G([\g])=
\Delta_A([\g])$.

Case I. $\g$ split with $[\g]$ of type $(n, 0)$.  Then $r_\g = \diag(1, a,
b)$, where $1, a, b$ are distinct with $\ord_\pi$ $ (a) = 0$ and
$\ord_\pi$ $ b = n$. Let $\delta = \ord_\pi$ $ (1 - a)$. 
The centralizer $C_G(r_\g)$ consists of diagonal elements in $G$.
By Corollary
\ref{primitivesplit}, $C_{P_\g^{-1} \G P_\g}(r_\g) \backslash
\Delta_A([\g])$ has cardinality $N_K(\g) = \vol([\g])q^{\delta}$,
represented by vertices $h_{i,j}v_xKZ$, where $h_{i,j} = \diag(1, \pi^i, \pi^j) \in C_{P_\g^{-1} \G P_\g}(r_\g) \backslash
C_G(r_\g)/(C_G(r_\g) \cap KZ)$ and
$v_x = \left(\begin{matrix} 1 & x &  \\
& 1 &  \\ & & 1\end{matrix}\right)$ with $x \in
\pi^{-\delta}\oo/\oo$.

There are $q+1$ chambers sharing the type $1$ edge $E_0:= (KZ,
\diag(1, 1, \pi)KZ)$ with the third vertex $u_c KZ :=
\left(\begin{matrix}\pi & c & \\ & 1 & \\ & & \pi
\end{matrix}\right)KZ$, $c \in \oo/\pi\oo$, and $u_\infty KZ:=
\left(\begin{matrix}1 &  & \\ & \pi & \\ & & \pi
\end{matrix}\right)KZ$. Left multiplication by $h_{i,j}v_x$ sends $E_0$ to $(h_{i, j}v_x KZ,  h_{i, j+1}v_xKZ)$ and the
third vertex to $h_{i, j}v_x u_c KZ = \left(\begin{matrix}1 & (c + x)/\pi & \\
& \pi^{i-1} & \\ & & \pi^j
\end{matrix}\right)KZ$ and $h_{i, j}v_x u_\infty KZ =
\left(\begin{matrix}1 & x\pi & \\ & \pi^{i+1} & \\ & & \pi^{j+1}
\end{matrix}\right)KZ$, respectively.
We count the number of such vertices belonging to $C_{P_\g^{-1} \G
P_\g}(r_\g) \backslash \Delta_A([\g])$.

There is only one integral $x$, namely, $x = 0$. When $\delta = 0$,
each type $1$ edge $(h_{i, j}v_0KZ,  h_{i, j+1}v_0 KZ)$ can be extended to
a pointed chamber by adding only one of the two vertices $h_{i+1, j+1}v_0 KZ$ and $h_{i-1, j}
v_0 KZ$ in $C_{P_\g^{-1} \G P_\g}(r_\g) \backslash \Delta_A([\g])$. Once the starting pointed chamber $gBZ$ is chosen,
the closed pointed gallery $\kappa_\g(gBZ)$ is determined.
Hence
$N_B(\g) = 2\#(C_{P_\g^{-1} \G P_\g}(r_\g) \backslash
\Delta_A([\g])) = 2N_K(\g).$

Next assume $\delta \ge 1$. In this case, each type $1$ edge $(h_{i,
j}v_0 KZ, h_{i, j+1}v_0 KZ)$ can be extended to a pointed chamber by adding one of the $q+1$
vertices $h_{i, j}v_0 u_cKZ$ and $h_{i, j}v_0 u_\infty KZ$ in
$C_{P_\g^{-1} \G P_\g}(r_\g) \backslash \Delta_A([\g])$. The same
holds when $h_{i,j}v_0$ is replaced by $h_{i,j}v_x$ for $-1 \ge
\ord_\pi$$ x \ge -\delta + 1$. This gives rise to $(q+1)(q^{\delta -
1} - 1)$ pointed chambers. Finally, when $\ord_\pi$$ x = - \delta$, each type $1$ edge $(h_{i, j}v_x KZ, h_{i, j+1}v_x KZ)$
can be extended to a pointed chamber by adding
 only one vertex $h_{i, j}v_x u_\infty KZ$ in $C_{P_\g^{-1} \G
P_\g}(r_\g) \backslash \Delta_A([\g])$, so there are $(q-1)q^{\delta
- 1}$ pointed chambers. Put together, we get $N_B(\g) = \vol([\g]) \big(q+1 +
(q+1)(q^{\delta - 1} - 1) + (q-1)q^{\delta - 1}\big) =
\vol([\g])2q^\delta = 2N_K(\g).$

Hence there is no contribution  to
$u\frac{d}{du}\log Z_{1,2}(X_\G, u) - u\frac{d}{du}\log Z_{2,1}(X_\G,
-u)$ from $\g$ split and $[\g]$ of type $(n,0)$.
\smallskip

Case II. $\g$ irregular with $[\g]$ of type $(n, 0)$.
Then $r_\g = \diag(1, 1, b)$, where $\ord_\pi$ $b= n$. By Corollary
\ref{primitivesplit}, $C_{P_\g^{-1} \G P_\g}(r_\g) \backslash
\Delta_A([\g])$ has cardinality $N_K(\g) = \vol([\g])$. By the same method
as in Case I, one checks that
all $q+1$ chambers sharing an edge with two vertices in $\Delta_A([\g])$
have the third vertex also lie in $\Delta_A([\g])$.
 Hence
$N_B(\g)=(q+1)N_K(\g)$ and
the contribution of an irregular
$\g$ with $[\g]$ of type $(n,0)$ to $u\frac{d}{du}\log Z_{1,2}(X_\G, u)
-u\frac{d}{du}\log Z_{2,1}(X_\G, -u)$ is $-(q-1)\vol([\g])u^{2n}$.
\smallskip


Case III. $\g$ unramified rank-one split with $[\g]$ of type $(n, 0)$. In
this case $r_\g = \left(
\begin{smallmatrix} a &  &  \\ & e & dc \\ & d& e+db \end{smallmatrix}\right)$,
and the eigenvalues $a$, $e + d \lambda$ and $e + d \bar {\lambda}$
of $r_\g$  generate an unramified quadratic extension $L$ over $F$.
The type assumption on $\g$ implies that $\ord_\pi$$ a = n$ and
$\min(\ord_\pi$$ e, \ord_\pi$$ d) = 0$ so that $e + d \lambda$ and $e +
d \bar {\lambda}$ are units in $L$. Let $\delta = \ord_\pi$$ d$.

As discussed in \S \ref{centralizers},
the double cosets $C_{P_\g^{-1} \G P_\g}(r_\g)\backslash
C_G(r_\g)/C_G(r_\g)\cap KZ$ are represented by $h_m = \diag(\pi^m, 1,
1)$, $m \mod \vol([\g])$. By Proposition
\ref{rankonenumberofalgtailless}, $C_{P_\g^{-1} \G P_\g}(r_\g)
\backslash \Delta_A([\g])$ has cardinality $N_K(\g) =
\vol([\g])\frac{q^\delta + q^{\delta - 1}-2}{q-1}$ and is represented
by $h_mg_{i,j,u}KZ$ and $h_m g_{i,z}KZ$, where $m \mod \vol([\g])$,
$g_{i,j,u} = \left(
\begin{smallmatrix}
1 &  & \\
& \pi^{i-j} & u\\
& & \pi^j
\end{smallmatrix} \right)$ with $0 \le j \le i \le \delta$, $u \in \oo^\times/\pi^{i-j}\oo$
for $j<i$ and $u = 0$ for $j=i$, and $ \quad g_{i,z} = \left(
\begin{smallmatrix}
1 &  & \\
& \pi^{i} & z\\
& & 1
\end{smallmatrix} \right)$ with $1 \le i \le \delta$ and $z \in \pi
\oo/\pi^i \oo$.

It remains to count the number of pointed chambers with vertices in
$C_{P_\g^{-1} \G P_\g}(r_\g) \backslash \Delta_A([\g])$ containing a
type $1$ edge $(gKZ,  g\diag(\pi, 1, 1)KZ)$ for $g
=h_mg_{i,j,u}$ or $h_mg_{i,z}$. When $\delta = 0$, there are no
$g_{i,z}$ and only one $g_{i, j, u}$, equal to the identity matrix,
hence the vertices in $C_{P_\g^{-1} \G P_\g}(r_\g) \backslash
\Delta_A([\g])$ are $h_mKZ$, $m \mod \vol([\g])$.  It is clear that
there are no pointed chambers formed by these vertices. Hence $N_K(\g) =
\vol([\g])$ and $N_B(\g) = 0$ when $\delta = 0$.

Next assume $\delta \ge 1$. There are $q+1$ chambers sharing
the type $1$ edge $E_1:= (KZ, \diag(\pi, 1, 1)KZ)$ with the third
vertex
being $w_x KZ:= \left(\begin{smallmatrix}\pi & & \\
& \pi & x \\ & & 1
\end{smallmatrix}\right)KZ$ with $x \in \oo/\pi\oo$ and $w_\infty KZ:= \diag(1, \pi^{-1}, 1)KZ$,
respectively. Left multiplication by $g = h_mg_{i,j,u}$ or $
h_mg_{i,z}$ sends the edge $E_1$ to the
type $1$ edge $(gKZ,  g\diag(\pi, 1, 1)KZ)$, so we need to
count the number of distinct vertices among $gw_xKZ$ and $gw_\infty
KZ$ which fall in $C_{P_\g^{-1} \G P_\g}(r_\g) \backslash
\Delta_A([\g])$. Observe that
$$h_mg_{i,j,u}w_x KZ = \left(\begin{matrix} \pi^{m+1} & & \\ &
\pi^{i-j+1} & x \pi^{i-j} + u \\ & & \pi^j \end{matrix}\right)KZ,
\qquad  h_mg_{i,j,u}w_\infty KZ= \left(\begin{matrix} \pi^{m} & & \\
& \pi^{i-j-1} &  u \\ & & \pi^j \end{matrix}\right)KZ,$$
$$h_mg_{i,z}w_x KZ= \left(\begin{matrix} \pi^{m+1} & & \\ &
\pi^{i+1} & x \pi^{i} + z \\ & & 1 \end{matrix}\right)KZ, \qquad
\text{and} \qquad h_mg_{i,z}w_\infty KZ =\left(\begin{matrix} \pi^{m} & & \\
& \pi^{i-1} & z \\ & & 1 \end{matrix}\right)KZ.$$

\noindent It is straight forward to check that, for $0 \le i \le \delta - 1$,
all $gw_x KZ$ and $gw_\infty KZ$ are distinct vertices in
$C_{P_\g^{-1} \G P_\g}(r_\g) \backslash \Delta_A([\g])$, thus they give rise to
are $\vol([\g])(q+1)\frac{q^\delta + q^{\delta - 1}-2}{q-1}$ pointed chambers.
When $i = \delta$, for each $g$ above, only $gw_\infty KZ$ lies in
$C_{P_\g^{-1} \G P_\g}(r_\g) \backslash \Delta_A([\g])$, hence they yield
 $\vol([\g])(q^\delta + q^{\delta - 1})$ pointed chambers. Altogether,
$N_B(\g)$ is equal to $2N_K(\g) - 2\vol([\g])$ for $\delta \ge 0$.

In conclusion, the contribution to $u\frac{d}{du}\log Z_{1,2}(X_\G, u)
-u\frac{d}{du}\log Z_{2,1}(X_\G, -u)$ from $\g$ unramified rank-one split with
$[\g]$ of type $(n,0)$  is $2\vol([\g])u^{2n}$.
\smallskip

Case IV. $\g$ ramified rank-one split with $[\g]$ of type $(n, 0)$. Then
$r_\g = \left(
\begin{smallmatrix} a &  &  \\ & e & dc \\ & d& e+db \end{smallmatrix}\right)$
 and the eigenvalues $a$, $e + d \lambda$ and $e + d \bar {\lambda}$
of $\g$  generate a ramified quadratic extension $L$ over $F$. In
this case, $\ord_\pi$$ a = n$ and $\ord_\pi$$ e = 0$ so that $e + d
\lambda$ and $e + d \bar {\lambda}$ are units in $L$. Let $\delta =
\ord_\pi$$ d$.

As discussed in \S \ref{centralizers}, $C_{P_\g^{-1} \G P_\g}(r_\g) \backslash
C_G(r_\g)/C_G(r_\g)\cap KZ$ has cardinality $\vol([\g])$, and it is
represented by $h= \diag(\pi^m, 1, 1)$ with $0 \le m \le
(\vol([\g])-1)/2$ and $ \diag(\pi^m, 1, 1)\pi_L$ with $0 \le m \le
(\vol([\g])-3)/2$ if $\vol([\g])$ is odd, and by $h = \diag(\pi^m, 1, 1)$
and $\diag(\pi^m, 1, 1)\pi_L$ with $m \mod \vol([\g])/2$ if $\vol([\g])$
is even. Here
$\pi_L = \left(\begin{smallmatrix} 1 & & \\ & & c\\
 & 1 & b \end{smallmatrix} \right)$ is imbedded in $G$.

It follows from
Proposition \ref{rankonenumberofalgtailless} that $C_{P_\g^{-1} \G
P_\g}(r_\g) \backslash \Delta_A(\g)$ is represented by $hg_{i,j,u}KZ$
for $g_{i,j,u}$ as in Case III and $h$ as above, so the total number
of vertices is $\vol([\g])(q^{\delta+1}-1)/(q-1)= N_K(\g)$.
%
%
 To count the number of pointed chambers we proceed as in Case III by
 counting, for each $g = hg_{i,j,u}$, the number of $gw_x KZ$ and
 $gw_\infty KZ$ which lie in $C_{P_\g^{-1} \G P_\g}(r_\g) \backslash
 \Delta_A(\g)$.

 We first discuss the case $\delta = 0$. Then there is only
 one $g_{0,0,u}$, equal to the identity matrix. All representatives
 are given by $hKZ$.
 Observe that $\diag(\pi^m, 1, 1)\pi_L KZ =
 \left(\begin{smallmatrix}\pi^m & & \\ & \pi & 0 \\ & & 1
 \end{smallmatrix}\right) KZ$. So there is only one vertex
 $gw_0 KZ$ which will form a chamber containing the
 type $1$ edge $(gKZ, g\diag(\pi, 1, 1)KZ)$. Hence the
  number of pointed chambers is $N_B(\g)= \vol([\g]) = 2N_K(\g) -
 \vol([\g])$ for $\delta = 0$.

 Now assume $\delta \ge 1$.  One sees from the explicit computation
 in Case III that for $g = hg_{i,j,u}$,
 all $q+1$ vertices $gw_x KZ$ and
 $gw_\infty KZ$ are distinct vertices in  $C_{P_\g^{-1} \G P_\g}(r_\g) \backslash \Delta_A([\g])$
 provided that $0 \le i \le \delta - 1$; when $i = \delta$, only one
 vertex, $gw_\infty KZ$, lies in $C_{P_\g^{-1} \G P_\g}(r_\g) \backslash \Delta_A(\g)$.
 They give rise to $\vol([\g])\big((q^\delta - 1)(q+1)/(q-1) + q^\delta \big) = \vol([\g])\big(2(q^{\delta + 1}
 - 1)/(q-1) - 1\big)$ pointed chambers. Therefore $N_B(\g) = 2N_K(\g) - \vol([\g])$ for $\delta \ge 1$.

 This shows that the contribution to $u\frac{d}{du}\log Z_{1,2}(X_\G, u)
-u\frac{d}{du}\log Z_{2,1}(X_\G, -u)$ from a ramified rank-one split
$\g$ with $[\g]$ of type $(n,0)$ is $\vol([\g])u^{2n}$.
\smallskip

Case V. $[\g]$ of type $(n, ~1)$. Then it has no contribution to the type $2$ edge zeta function,
and it has contribution to the type $1$ chamber zeta function only when $\g$ is ramified rank-one split.
Then $r_\g$ has eigenvalues $a, e+
d\lambda, e + d \bar{\lambda}$, where $a, e, d \in F$, $\ord_\pi$ $ a =
2n-1$, $\ord_\pi$ $ e \ge 1$ and $\delta = \ord_\pi$ $ d = 0$ by the
analysis above Theorem \ref{rankoneminlength}. 
Its contribution to
$u\frac{d}{du}\log Z_{2,1}(X_\G, -u)$ is $-N_B(\g) u^{2n+1}$ with
$N_B(\g) = \#C_{P_\g^{-1} \G P_\g}(r_\g) \backslash \Delta_G([\g])$.
Since $\delta = 0$ and $\mu = 0$ by the remark following Theorem
\ref{numberinrankoneclass}, we have $\Delta_G([\g]) =
\Delta_A([\g])$ such that $N_B(\g) = \vol([\g])$ by Corollary
\ref{rankonedoublecosets}.

This completes the proof of the theorem.
\end{proof}

An immediate consequence of the theorem above is a description of the number $M_n(X_\G)$ of closed, type $1$, tailless geodesic galleries of length $n$ in $X_\G$, given below:

\begin{corollary}\label{numberoftaillessgalleries}
(1) If $n=2m+1$ is odd, then
\begin{eqnarray*}
M_n(X_\G) = \sum_{\substack{\g \in [\G] ~{\rm ramified ~rank-one ~split,}\\  ~[\g] {\rm~of ~type} ~(1,m)}}
\vol([\g]);
\end{eqnarray*}

(2) If $n = 2m$ is even, then
\begin{eqnarray*}
M_n(X_\G) &=& \sum_{\substack{\g \in [\G] ~{\rm split,}\\  ~[\g] {\rm~of ~type} ~(0,m)}} \vol([\g])\omega_{[\g]} +
\sum_{\substack{ \g \in [\G] ~{\rm irregular},\\ ~[\g] {\rm ~of ~type} ~(0, m)}} \vol([\g])q\\
&+& \sum_{ \substack{\g \in [\G] ~{\rm unramified ~rank-one
~split ,}\\ ~[\g] {\rm~of ~type} ~(0, m)}} \vol([\g])(\omega_{[\g]} - 2)
+ \sum_{\substack{\g \in [\G] ~{\rm ramified ~rank-one ~split,}\\  ~[\g] {\rm~of ~type} ~(0,m)}}
\vol([\g])(\omega_{[\g]} - 1).
\end{eqnarray*}
Here $\rm{vol}([\g])$ is defined by (\ref{fundamentaldomain'}) and $\omega_{[\g]}$ is as in Theorem \ref{numberinaclass} and Proposition \ref{rankonenumberofalgtailless}.
\end{corollary}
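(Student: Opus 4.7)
The plan is to extract the formulas for $M_n(X_\G)$ directly from the identity in Theorem~\ref{compareZ2andZ1} by reading off the coefficient of $u^n$ on both sides. By Proposition~\ref{rationalZ2},
\[
u\frac{d}{du}\log Z_{2,1}(X_\G,-u) = \sum_{n\ge 1}(-1)^n M_n(X_\G)\,u^n,
\]
while Theorem~\ref{logedgezeta} with $i=2$ shows that $u\frac{d}{du}\log Z_{1,2}(X_\G,u)$ involves only even powers of $u$, with
\[
[u^{2m}]\, u\tfrac{d}{du}\log Z_{1,2}(X_\G,u) \;=\; \sum_{\substack{\g\in[\G]\\ [\g]\text{ of type }(m,0)}} 2\vol([\g])\omega_{[\g]}.
\]
Substituting these series into Theorem~\ref{compareZ2andZ1} reduces the corollary to extracting coefficients and reorganizing terms.

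For odd $n=2m+1$: the coefficient of $u^{2m+1}$ on the left is $-M_{2m+1}(X_\G)$, since $u\tfrac{d}{du}\log Z_{1,2}(X_\G,u)$ contributes nothing to odd powers. On the right side of Theorem~\ref{compareZ2andZ1}, the only odd-degree terms come from ramified rank-one split $\g$ with $[\g]$ of type $(1,m)$, contributing $-\vol([\g])\,u^{2m+1}$. Equating these yields the odd case at once.

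For even $n=2m$: the coefficient of $u^{2m}$ on the left is $\sum_{[\g](m,0)} 2\vol([\g])\omega_{[\g]} - M_{2m}(X_\G)$, and on the right it is the sum of contributions from irregular, unramified rank-one split, and ramified rank-one split $\g$ with $[\g]$ of type $(0,m)$. Solving for $M_{2m}(X_\G)$, the main step is to rewrite the sum over type $(m,0)$ classes as a sum over type $(0,m)$ classes via the bijection $\g\leftrightarrow \g^{-1}$. Here I would invoke the computations of $N_B(\g)$ performed in Cases I--IV of the proof of Theorem~\ref{compareZ2andZ1}, which express $N_B(\g)$ for each of the four conjugacy types in terms of $\vol$ and $\omega$; combining them type by type with the correction terms on the right of Theorem~\ref{compareZ2andZ1} assembles the four summands of the corollary.

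The main obstacle will be the bookkeeping in the even case: tracking how $\omega_{[\g]}$ transforms under $\g\leftrightarrow \g^{-1}$ across split, irregular, unramified rank-one split, and ramified rank-one split types, and then combining with the sign-twisted corrections supplied by Theorem~\ref{compareZ2andZ1} to recover the coefficients $\omega_{[\g]}$, $q$, $\omega_{[\g]}-2$, and $\omega_{[\g]}-1$ in the four sums respectively. All the required raw data has already been assembled in Cases I--V of the proof of Theorem~\ref{compareZ2andZ1}, so this final step is a matter of careful regrouping rather than a genuinely new computation.
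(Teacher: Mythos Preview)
Your proposal is essentially correct and follows the same route as the paper, which simply records the corollary as ``an immediate consequence'' of Theorem~\ref{compareZ2andZ1}. Two small remarks. First, your signs in the odd case are both wrong: the coefficient of $u^{2m+1}$ in $u\tfrac{d}{du}\log Z_{1,2}(X_\G,u)-u\tfrac{d}{du}\log Z_{2,1}(X_\G,-u)$ is $+M_{2m+1}(X_\G)$ (since $-(-1)^{2m+1}M_{2m+1}=+M_{2m+1}$), and the right side of Theorem~\ref{compareZ2andZ1} contributes $+\vol([\g])u^{2m+1}$, not $-\vol([\g])u^{2m+1}$; the two sign errors cancel, so your conclusion is unaffected. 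Second, in the even case you are taking a slightly circuitous path: the proof of Theorem~\ref{compareZ2andZ1} already establishes $M_{2m}(X_\G)=\sum_{[\g]\text{ of type }(0,m)}N_B(\g)$ (and similarly for odd $n$) and then computes $N_B(\g)$ case by case in Cases~I--V, so one can read off $M_{2m}$ directly from those computations without going back through the statement of the theorem and Theorem~\ref{logedgezeta}. Your approach---extracting coefficients from the statement and then invoking Cases~I--IV---recovers the same data but with an extra pass through the $\g\leftrightarrow\g^{-1}$ bookkeeping you flag as the main obstacle. Either way the substance is the same: the corollary is just the case-by-case values of $N_B(\g)$ assembled into a single formula.
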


\section{A proof of Theorem C}\label{proofofC}

\subsection{Hecke operators on $X_\G$ and cycle counting}

The action of the Hecke operator $T_{n,m}$ on $L^2(\G \backslash G/KZ)$ is represented by the matrix $B_{n,m}$, whose rows and columns are indexed by vertices of $X_\G$
such that the entry at the row indexed by $\G gKZ$ and column indexed by $\G g'KZ$ records the number of homotopy
classes of 1-geodesic paths of type
$(n, m)$ from $\G gKZ$ to $\G g'KZ$ in $X_\G$. Alternatively, this is the number of $\g \in \G$ such that
the homotopy classes of the 1-geodesics in $\B$ from $gKZ$ to $\g g'KZ$ have
type $(n, m)$. The trace of $B_{n,m}$ then gives the number of
1-geodesic cycles of type $(n,m)$ up to homotopy. In other words,
\begin{eqnarray*}
\Tr(B_{n,m}) &=& \#\bigg\{~\kappa_\g(gKZ) ~|~ \g \in [\G],
~\kappa_\g(gKZ) \in [\g] \text{ has type }(n,m) \bigg\}.
\end{eqnarray*}
To facilitate our computations, form two kinds of formal power
series:
\begin{equation}\label{sumBnm}
 \sum_{\substack{n,m \geq 0 \\ (n,m)\neq (0,0)}}
\Tr(B_{n,m})u^{n+2m} = \sum_{\g \in [\G], ~\g \ne id}
~\sum_{\kappa_\g(gKZ) \in [\g]} ~u^{l_A(\kappa_\g(gKZ))},
\end{equation} and
\begin{equation}\label{sumBn0}
 \sum_{n>0} \Tr(B_{n,0})u^{n}
= \sum_{\g \in [\Gamma], ~\g \ne id} ~\sum_{ \kappa_\g(gKZ) \in [\g]
\text{ has type $1$}}  ~u^{l_A(\kappa_\g(gKZ))}.
\end{equation}

We can relate the left hand side of the zeta identity (\ref{zetaidentity}) to cycle counting:
 \begin{proposition}
\begin{eqnarray}\label{lefthand1}
&& u \frac{d}{du}
\log\frac{(1-u^3)^{\chi(X_\G)}}{\det(I-A_1 u + A_2 q u^2 -q^3u^3 I)}\\
&=&q\left(\sum_{n>0} \Tr(B_{n,0}) u^n\right)-(q-1)\left( \sum_{\substack{n,m \geq 0 \\ (n,m)\neq (0,0)}}
\Tr(B_{n,m})u^{n+2m}\right)\frac{1-q^2u^3}{1-u^3}, \notag
\end{eqnarray}
where the operators act on $L^2(\G \backslash G/KZ)$, $\chi(X_\G)= \frac{(q+1)(q-1)^2}{3}V$ is the Euler characteristic of $X_\G$, and $V$ is the number of vertices in $X_\G$.
\end{proposition}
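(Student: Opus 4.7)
The plan is to deduce the identity from the operator-level Hecke recursion (\ref{recursivehecke}) by passing to the finite-dimensional space $L^2(\G\backslash G/KZ)$ via trace. Since every Hecke operator $T_{n,m}$ commutes with left-translation by $\G$, it descends to an endomorphism of $L^2(\G\backslash G/KZ)$; in the natural basis indexed by the vertices of $X_\G$, the descended $T_{n,m}$ is represented by the matrix $B_{n,m}$, and $A_1 = T_{1,0}$, $A_2 = T_{0,1}$ act as the vertex adjacency matrices of $X_\G$.

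The first step is to take the trace on $L^2(\G\backslash G/KZ)$ of both sides of (\ref{recursivehecke}). On the left of (\ref{recursivehecke}) this produces, coefficient by coefficient, exactly the right hand side of the present proposition, since $\Tr(T_{n,m}) = \Tr(B_{n,m})$. The second step is to convert the trace of the right hand side of (\ref{recursivehecke}) into the logarithmic derivative of a scalar determinant via the standard identity
\[
\Tr\bigl(M(u)^{-1} \tfrac{d}{du} M(u)\bigr) \;=\; \tfrac{d}{du}\log \det M(u),
\]
valid for any formal matrix power series with $M(0)$ invertible, applied to $M(u) = (1-u^3)^{r} I \cdot (I - A_1 u + A_2 q u^2 - q^3 u^3 I)^{-1}$ with $r = \frac{(q+1)(q-1)^2}{3}$. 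Combined with $\det(cI) = c^V$ for $V = \dim L^2(\G\backslash G/KZ)$, this yields
\[
u \tfrac{d}{du}\log \frac{(1-u^3)^{rV}}{\det(I - A_1 u + A_2 q u^2 - q^3 u^3 I)}.
\]

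The final step is to verify $rV = \chi(X_\G)$, a routine Euler-characteristic count $V-E+F$ using the local degrees established in \S3: each vertex has $q^2+q+1$ type-1 neighbors (so $E = V(q^2+q+1)$) and lies in $(q^2+q+1)(q+1)$ chambers (so $F = V(q^2+q+1)(q+1)/3$), and the alternating sum simplifies to $\frac{(q+1)(q-1)^2}{3}V$. The conceptual content of the proof is already carried by the Hecke recursion (\ref{recursivehecke}); the remaining work is purely formal. The only step that deserves a careful check is the passage from the trace of a logarithmic derivative of operators to the logarithmic derivative of a determinant, but this is routine once one expands both sides as formal power series in $u$, so I do not anticipate a real obstacle.
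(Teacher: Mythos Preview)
Your proposal is correct and follows essentially the same approach as the paper: descend the Hecke recursion (\ref{recursivehecke}) to $L^2(\G\backslash G/KZ)$, take traces, use the $\Tr\log = \log\det$ identity, and verify $rV = \chi(X_\G)$ by the vertex--edge--chamber count. The paper phrases the trace step as $\log(\det A)=\Tr(\log A)$ rather than via $\Tr(M^{-1}M')$, but this is only a cosmetic difference.
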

\begin{proof}
As $B_{n,m}$ is $T_{n,m}$ acting on the space $L^2(\G \backslash G/KZ)$, so
(\ref{recursivehecke}) also holds with $T_{n,m}$ replaced by $B_{n,m}$. In other words,\begin{eqnarray*}
& & u \frac{d}{du}
\Tr \log\frac{(1-u^3)^rI}{(I-A_1 u + A_2 q u^2 -q^3u^3 I)} \\
&=& q\left(\sum_{n>0} \Tr(B_{n,0}) u^n\right)-(q-1)\left( \sum_{\substack{n,m \geq 0 \\ (n,m)\neq (0,0)}}
\Tr(B_{n,m})u^{n+2m}\right)\frac{1-q^2u^3}{1-u^3},
\end{eqnarray*} where $r = \frac{(q+1)(q-1)^2}{3}$.
Recall that each vertex is incident to $q^2+q+1$ type 1 edges and $q^2+q+1$ type 2
edges so that the total number of undirected edges in $X_\G$ is
$\frac{2(q^2+q+1)}{2}V$. Since each edge is contained in $(q+1)$ chambers,
the number of chambers in $X_\G$ is
$\frac{(q+1)}{3}(q^2+q+1)V$.  Therefore the Euler characteristic of
$X_\G$ is
$$ \chi(X_\G) = V - (q^2+q+1) V + \frac{(q+1)}{3}(q^2+q+1)V =
\frac{(q-1)^2(q+1)}{3}V = rV.$$
Using the identity
$$ \log( \det A ) =  \Tr ( \log  A)$$
for a $V\times V$ matrix $A$, we have
$$u \frac{d}{du}
\Tr \log\frac{(1-u^3)^rI}{(I-A_1 u + A_2 q u^2 -q^3u^3 I)}=u \frac{d}{du}
 \log\frac{(1-u^3)^{\chi(X_\G)}}{\det(I-A_1 u + A_2 q u^2 -q^3u^3 I)}, $$
which proves the proposition.
\end{proof}

\subsection{Type $1$ edge zeta function revisited}

Although the type $1$ edge zeta function only concerns type $1$ tailless cycles,
to prove the theorem we shall involve all homotopy cycles.
Denote by $P_{n,m,s}$, $P_{n,m,i}$, $Q_{n,m}$, and $R_{n,m}$ the number of algebraically minimal homotopy cycles of type $(n, m)$ contained in the conjugacy classes
of split, irregular, unramified rank-one split, and ramified rank-one
split $\g$'s, respectively. More precisely,
\begin{equation}\label{pnms}
\begin{aligned}
P_{n,m, s} = ~ \sum_{\substack{\g \in [\G] ~\text{split}\\ ~[\g]~
\text{of type }(n,m) }} ~\#(C_{P_\g^{-1} \G P_\g}(r_\g)\backslash
\Delta_A([\g])) = ~ \sum_{\substack{\g \in [\G] ~\text{split}\\ ~[\g]~
\text{of type }(n,m) }} ~\text{vol}([\g])\omega_{[\g]} ,
\end{aligned}
\end{equation}
and $P_{n,m, i}$, $Q_{n,m}$, and $R_{n,m}$ are similarly defined by changing the type of $\g$ accordingly.

\noindent Recall that an irregular $\g$ has type $1$ or $2$ so that $P_{n,m,i} = 0$ if $nm \ne 0$. Further since $\g$ has type $(n,0)$ if and only if $\g^{-1}$ has type $(0,n)$, we have $P_{n,0,i} = P_{0,n,,i}$. By Theorem \ref{logedgezeta}, the type $1$ edge zeta function can be restated as

\begin{proposition}\label{zetaassum}
$$ u \frac{d}{du} \log Z_{1,1}(X_\G, u) =\sum_{n>0}(P_{n,0,s} + P_{n,0,i} + Q_{n,0} + R_{n,0})u^n.$$
\end{proposition}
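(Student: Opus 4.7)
The plan is to derive this identity directly from Theorem \ref{logedgezeta} by unpacking the definitions of $P_{n,0,s}$, $P_{n,0,i}$, $Q_{n,0}$, and $R_{n,0}$, and by using the classification of $\Gamma$-elements in \S\ref{classificationofGamma}. Theorem \ref{logedgezeta} (with $i=1$) asserts
$$u\frac{d}{du}\log Z_{1,1}(X_\G,u) = \sum_{\g\in[\G],\ [\g]\ \text{of type }1} \mathrm{vol}([\g])\,\omega_{[\g]}\,u^{l_A([\g])},$$
so the goal is to reorganize the right-hand side as a sum over $n>0$ of contributions indexed by the four classes of elements.

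First I would observe that a non-identity element $\g\in[\G]$ has $[\g]$ of type $1$ precisely when $[\g]$ has type $(n,0)$ for some integer $n\ge 1$; in that case $l_A([\g])=n$. I would therefore partition the sum according to the value of $n$ and then further partition it according to the classification of $\g$ into split, irregular, unramified rank-one split, or ramified rank-one split elements.

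Next I would invoke Proposition \ref{algminequalgeommin}, which says that for $\g$ of type $1$ (or $2$) the algebraically minimal cycles in $[\g]$ coincide with the tailless cycles in $[\g]$. This identifies the weight $\mathrm{vol}([\g])\omega_{[\g]}$ appearing in Theorem \ref{logedgezeta} with the count $\#\bigl(C_{P_\g^{-1}\G P_\g}(r_\g)\backslash \Delta_A([\g])\bigr)$ used in the definition (\ref{pnms}). Thus, summing $\mathrm{vol}([\g])\omega_{[\g]}$ over $\g\in[\G]$ split with $[\g]$ of type $(n,0)$ gives exactly $P_{n,0,s}$ by definition, and the same applies mutatis mutandis to the irregular, unramified rank-one split, and ramified rank-one split cases, producing $P_{n,0,i}$, $Q_{n,0}$, and $R_{n,0}$ respectively.

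Combining these observations, grouping terms by $n$ and summing over the four disjoint classes yields
$$\sum_{\g\in[\G],\ [\g]\ \text{of type }1}\mathrm{vol}([\g])\omega_{[\g]} u^{l_A([\g])} = \sum_{n>0}\bigl(P_{n,0,s}+P_{n,0,i}+Q_{n,0}+R_{n,0}\bigr)u^n,$$
which is the stated identity. There is no real obstacle here; the only point that requires care is ensuring that Proposition \ref{algminequalgeommin} applies uniformly to all four classes, so that the weight $\mathrm{vol}([\g])\omega_{[\g]}$ from Theorem \ref{logedgezeta} and the cardinality used in (\ref{pnms}) match exactly.
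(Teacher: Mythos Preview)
Your proof is correct and follows essentially the same approach as the paper, which simply introduces the proposition as a restatement of Theorem \ref{logedgezeta} in terms of the newly defined quantities $P_{n,0,s}$, $P_{n,0,i}$, $Q_{n,0}$, $R_{n,0}$. One small remark: the identification of $\mathrm{vol}([\g])\omega_{[\g]}$ with $\#\bigl(C_{P_\g^{-1}\G P_\g}(r_\g)\backslash \Delta_A([\g])\bigr)$ is already built into definition (\ref{pnms}) via Theorem \ref{numberinaclass} and Proposition \ref{rankonenumberofalgtailless}; Proposition \ref{algminequalgeommin} (which asserts $\Delta_A([\g])=\Delta_G([\g])$) is not the proposition you actually need here, though nothing hinges on this.
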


\subsection{The number of homotopy cycles of type $(n,m)$ }

In order to gain information on $P_{n,0,s}$, $P_{n,0,i}$, $Q_{n,0}$ and $R_{n,0}$,
we extend the summation to include homotopy cycles of type $(n,m)$.
Recall that the number of such cycles is $\Tr(B_{n,m})$, and cycles
with tails are also included.
Their
relation with the number of algebraically tailless cycles is given below.

\begin{proposition}\label{traceBnm}
With the same notation as in Theorem \ref{numberinrankoneclass}, we
have
\begin{eqnarray*}
\sum_{\substack{n,m \geq 0 \\ (n,m)\neq (0,0)}}
\Tr(B_{n,m})u^{n+2m} = \bigg(\sum_{\substack{n,m \geq 0 \\
(n,m)\neq (0,0)}}
P_{n,m,s}u^{n+2m}\bigg)\frac{1-u^3}{1-q^3 u^3} + \bigg(\sum_{\substack{n,m \geq 0 \\
(n,m)\neq (0,0)}}
P_{n,m,i}u^{n+2m}\bigg)\frac{1-u^3}{1-q^2 u^3}\\
+ \sum_{\substack{\g \in [\G]\\ [\g] ~\text{unram. rank-one
split}}}\vol([\g])u^{l_A([\g])}\bigg(\frac{q^{\de([\g])+1}+q^{\de([\g])}-2}{q-1}+
\frac{(q+1)q^{\de([\g])+2}u^3}{1-q^3 u^3}\bigg)
\bigg(\frac{1-u^3}{1-q^2 u^3}\bigg) \\
 + \sum_{\substack{\g \in [\G]\\ [\g] ~\text{ram. rank-one
split}}}\vol([\g])q^{\mu([\g])}u^{l_A([\g])}\bigg(\frac{q^{\de([\g])
+1} -1}{q-1} + \frac{q^{\de([\g]) +3}u^3}{1-q^3u^3}
\bigg)\frac{1-u^3}{1-q^2u^3}.
\end{eqnarray*}
\end{proposition}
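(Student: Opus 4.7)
The plan is to deduce the stated formula directly from the identity (\ref{sumBnm}) by partitioning $[\G]$ according to the classification of \S\ref{classificationofGamma}, then substituting the closed-form generating functions already established in Theorems \ref{numberinaclass} and \ref{numberinrankoneclass}. There is no new length estimate or geometric input required; the work is purely combinatorial reorganization.

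First, I would rewrite the left-hand side via (\ref{sumBnm}) as
\begin{equation*}
\sum_{\substack{n,m\ge 0\\(n,m)\ne(0,0)}} \Tr(B_{n,m})\,u^{n+2m}
\;=\; \sum_{\substack{\g\in[\G]\\ \g\ne id}} \;\sum_{\kappa_\g(gKZ)\in[\g]} u^{l_A(\kappa_\g(gKZ))},
\end{equation*}
and then split the outer sum into four subsums corresponding to $\g$ being split, irregular, unramified rank-one split, and ramified rank-one split respectively, according to \S\ref{classificationofGamma}.

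For the split and irregular pieces, Theorem \ref{numberinaclass} yields
\begin{equation*}
\sum_{\kappa_\g(gKZ)\in[\g]} u^{l_A(\kappa_\g(gKZ))}
= \vol([\g])\,\omega_{[\g]}\,u^{l_A([\g])}\cdot\frac{1-u^3}{1-q^3u^3}
\end{equation*}
in the split case, with the denominator replaced by $1-q^2u^3$ in the irregular case (where $\omega_{[\g]}=1$). I would then regroup these sums by the type $(n,m)$ of $[\g]$ and invoke the definition (\ref{pnms}) of $P_{n,m,s}$ together with the analogous $P_{n,m,i}$; this exactly produces the first two summands on the right-hand side of the proposition.

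For the two rank-one split pieces, no such regrouping is needed: the last two summands of the proposition are already written as sums over $[\g]$ weighted by $\de([\g])$ and $\mu([\g])$, so it suffices to substitute parts (A1) and (B1) of Theorem \ref{numberinrankoneclass} term by term. There is no substantial obstacle in this argument — the only care required is to keep the weights $\omega_{[\g]}$ absorbed into $P_{n,m,s}$ and $P_{n,m,i}$ in the split/irregular cases while leaving the analogous factors $\tfrac{q^{\de+1}+q^\de-2}{q-1}$ and $\tfrac{q^{\de+1}-1}{q-1}$ inside the brackets in the rank-one cases, since these factors depend on $\de([\g])$ (and $\mu([\g])$) and are not packaged as $P$-style type sums in the present formulation.
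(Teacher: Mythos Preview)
Your proposal is correct and follows essentially the same approach as the paper: decompose the right side of (\ref{sumBnm}) into the four classes of $\g$, apply Theorem \ref{numberinaclass} to the split and irregular parts (repackaging via the definitions of $P_{n,m,s}$ and $P_{n,m,i}$), and substitute (A1) and (B1) of Theorem \ref{numberinrankoneclass} for the rank-one split parts. The paper's own proof is exactly this, stated more tersely.
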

\begin{proof} Break the right side of (\ref{sumBnm}) into four parts, over
split, irregular, unramified rank-one split, and ramified rank-one split
$\g$'s, respectively. Applying Theorem \ref{numberinaclass} to the
split and irregular part, Theorem \ref{numberinrankoneclass} to the unramified
and ramified rank-one split parts, and using the definitions of
$P_{n,m,s}$ and $P_{n,m,i}$, we get the desired formula.
\end{proof}

Next we compute the number of type $1$ homotopy cycles on $X_\G$.

\begin{proposition}\label{alltypezerocycles}
With the same notation as in Theorem \ref{numberinrankoneclass}, we
have
\begin{eqnarray*}
& &\sum_{n>0} \Tr(B_{n,0})u^{n} = (1-q^{-1})\bigg(\sum_{(n, m)
\ne (0,0)} P_{n,m,s}u^{n+2m}\bigg)\frac{1-q^2u^3}{1-q^3u^3}\\
&+& \sum_{\substack{\g \in [\G]\\ [\g] ~\text{unram. rank-one
split}}}\vol([\g])u^{l_A([\g])}\bigg(q^{\de([\g])} + q^{\de([\g]) - 1}+
\frac{(q^2-1)q^{\de([\g]) +1}u^3}{1-q^3u^3} \bigg) \\
&+& \sum_{\substack{\g \in [\G]\\ [\g] ~\text{ram. rank-one
split}}}\vol([\g])u^{l_A([\g])}\bigg(q^{\de}(q^{\mu} - \mu) +
\frac{(q-1)q^{\de + \mu +2}u^3}{1-q^3u^3}\bigg)\\
&+& q^{-1}\sum_{n>0} (P_{n,0,s} + qP_{n,0,i} + Q_{n,0} + R_{n,0})u^{n} -
2q^{-1}\sum_{\substack{\g \in [\G], ~\text{type $1$}\\ [\g]
~\text{unram. rank-one split}}}\vol([\g])u^{l_A([\g])} \\
&+& \sum_{\substack{\g \in [\G], ~\text{type $1$}\\ [\g] ~\text{ram.
rank-one split}}}\vol([\g])u^{l_A([\g])}(-q^{\mu([\g])-1} +
\mu([\g])q^{\de([\g])}).
\end{eqnarray*}
\end{proposition}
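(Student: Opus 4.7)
\textbf{Proof proposal for Proposition \ref{alltypezerocycles}.} The plan is to start from the identity (\ref{sumBn0}), which rewrites $\sum_{n>0}\Tr(B_{n,0})u^n$ as a sum over non-identity conjugacy class representatives $\g\in[\G]$ of the series $\sum_{\kappa_\g(gKZ)\in[\g],~\text{type }1} u^{l_A(\kappa_\g(gKZ))}$. I will then partition $[\G]$ into four blocks according to the classification of \S\ref{classificationofGamma} (split, irregular, unramified rank-one split, ramified rank-one split) and evaluate each block using the inner-sum formulas already computed: Theorem \ref{type0cyclesinaclass}(A)(B)(C) for the split and irregular cases, and Theorem \ref{numberinrankoneclass}(A2)(A3)(B2)(B3) for the two rank-one split cases. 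This reduces the problem to bookkeeping: expressing the sum of ``type~$1$ case'' and ``not type~$1$ case'' contributions as a single ``sum over all classes of a given type'' plus a type~$1$ correction.

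The key algebraic identities that match the target shape are as follows. For split $\g$, the difference between the Theorem \ref{type0cyclesinaclass}(B) formula and the Theorem \ref{type0cyclesinaclass}(A) formula evaluated at $[\g]$ of type~$1$ is exactly $q^{-1}\vol([\g])\omega_{[\g]}u^{l_A([\g])}$, so the split block becomes
\[
(1-q^{-1})\bigg(\sum_{(n,m)\ne(0,0)}P_{n,m,s}u^{n+2m}\bigg)\frac{1-q^2u^3}{1-q^3u^3}+q^{-1}\sum_{n>0}P_{n,0,s}u^n.
\]
For irregular $\g$, Theorem \ref{type0cyclesinaclass}(C) gives a contribution only from type~$1$ classes, equal to $\sum_{n>0}P_{n,0,i}u^n$, which is precisely the $q^{-1}\cdot qP_{n,0,i}$ term in the target. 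For unramified rank-one split $\g$, using that $\omega_{[\g]}=(q^{\de+1}+q^{\de}-2)/(q-1)$, one verifies
\[
\frac{q^{\de+1}+q^{\de}-2}{q-1}-(q^{\de}+q^{\de-1})=q^{-1}\omega_{[\g]}-2q^{-1},
\]
so summing the A3 contribution for type~$1$ classes and the A2 contribution for the rest collects into the ``sum over all unramified rank-one split $\g$'' term plus $q^{-1}\sum_n Q_{n,0}u^n-2q^{-1}\sum_{\text{type 1 unram.}}\vol([\g])u^{l_A([\g])}$. For ramified rank-one split $\g$, noting that $\mu([\g])=0$ when $[\g]$ is of type~$1$, the analogous identity
\[
\frac{q^{\de+1}-1}{q-1}-q^{\de}=q^{-1}\omega_{[\g]}-q^{-1}=q^{-1}\omega_{[\g]}-q^{\mu([\g])-1}+\mu([\g])q^{\de([\g])}
\]
gives the ``all ramified'' sum plus $q^{-1}\sum_n R_{n,0}u^n$ plus the last correction $\sum_{\text{type 1 ram.}}\vol([\g])u^{l_A([\g])}(-q^{\mu([\g])-1}+\mu([\g])q^{\de([\g])})$ displayed in the proposition.

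The main obstacle is not conceptual but purely computational: one must verify the three small algebraic identities above (one for each of the split, unramified rank-one, and ramified rank-one cases) and carefully track which subsets of $[\G]$ indices each of $P_{n,m,s}$, $P_{n,m,i}$, $Q_{n,0}$, $R_{n,0}$ enumerate, in particular remembering that $P_{n,m,i}=0$ unless $nm=0$ and that $Q_{n,0},R_{n,0}$ only involve type~$1$ classes. Once these identities are in hand, adding the four blocks produces the stated formula term by term.
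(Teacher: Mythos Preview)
Your proposal is correct and follows essentially the same approach as the paper: both start from (\ref{sumBn0}), split the sum over $\g\in[\G]$ into the four classification types, apply Theorem~\ref{type0cyclesinaclass}(A)(B)(C) and Theorem~\ref{numberinrankoneclass}(A2)(A3)(B2)(B3), and then use the same algebraic identities (your displayed identities for the unramified and ramified cases are exactly the content of the paper's equations (\ref{unramodds}) and (\ref{ramodds})) together with the definitions of $P_{n,m,s}$, $P_{n,0,i}$, $Q_{n,0}$, $R_{n,0}$ and Proposition~\ref{rankonenumberofalgtailless} to repackage the type~$1$ corrections.
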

\begin{proof} By definition,
\begin{eqnarray*}
\sum_{n>0} \Tr(B_{n,0})u^{n} = \sum_{\g \in [\Gamma]}
~\sum_{\kappa_\g(gK) \in [\g] ~ \text{type $1$}}
u^{l_A(\kappa_\g(gK))}.
 \end{eqnarray*}
 We split the sum over $\g$ into four parts according to $\g$ split, irregular, unramified rank-one split,
  or ramified rank-one split. For the split part, we add (A) and (B) of Theorem
 \ref{type0cyclesinaclass} and use the definition of $P_{n,m,s}$ to arrive
 at the sum $$(1-q^{-1})\bigg(\sum_{(n, m) \ne
(0,0)} P_{n,m,s}u^{n+2m}\bigg)\frac{1-q^2u^3}{1-q^3u^3} ~+~
q^{-1}\bigg(\sum_{n>0} P_{n,0,s}u^{n}\bigg).$$ For the irregular part, Theorem
 \ref{type0cyclesinaclass}, (C) gives the contribution $\sum_{n>0} P_{n,0,i} u^n.$ For the unramified
(resp. ramified) rank-one split part, we add (A2) and (A3) (resp.
(B2) and (B3)) of Theorem \ref{numberinrankoneclass} to get
\begin{equation}\label{rankonepart}
\begin{aligned}
& &\sum_{\substack{\g \in [\G]\\ [\g] ~\text{unram. rank-one
split}}}\vol([\g])u^{l_A([\g])}\bigg(q^{\de([\g])} + q^{\de([\g]) - 1}+
\frac{(q^2-1)q^{\de([\g]) +1}u^3}{1-q^3u^3} \bigg) \\
&+& \sum_{\substack{\g \in [\G], ~\text{type $1$}\\ [\g]
~\text{unram. rank-one
split}}}\vol([\g])u^{l_A([\g])}\frac{q^{\de([\g])} +
q^{\de([\g])-1}-2}{q-1} \\
&+&\sum_{\substack{\g \in [\G]\\ [\g] ~\text{ram. rank-one
split}}}\vol([\g])u^{l_A([\g])}\bigg(q^{\de([\g])}(q^{\mu([\g])} -
\mu([\g])) +
\frac{(q-1)q^{\de([\g]) + \mu([\g]) +2}u^3}{1-q^3u^3}\bigg) \\
&+& \sum_{\substack{\g \in [\G], ~\text{type $1$}\\ [\g] ~\text{ram.
rank-one
split}}}\vol([\g])u^{l_A([\g])}\bigg(q^{\mu([\g])}\frac{q^{\de([\g])}-1}{q-1}
+ \mu([\g])q^{\de([\g])}\bigg).
\end{aligned}
\end{equation}
It follows from Proposition \ref{rankonenumberofalgtailless} and the
definitions of $Q_{n,0}$ and $R_{n,0}$ that
\begin{equation}\label{unramodds}
\begin{aligned}
& &\sum_{\substack{\g \in [\G], \text{type $1$}\\ [\g] ~\text{unram.
rank-one split}}}\vol([\g])u^{l_A([\g])}\frac{q^{\de([\g])} +
q^{\de([\g])-1}-2}{q-1} \\
&=& q^{-1}\sum_{n> 0} Q_{n,0}u^n - 2q^{-1}\sum_{\substack{\g \in
[\G], ~\text{type $1$}\\ [\g] ~\text{unram. rank-one
split}}}\vol([\g])u^{l_A([\g])}
\end{aligned}
\end{equation} and
\begin{equation}\label{ramodds}
\begin{aligned}
& &\sum_{\substack{\g \in [\G], ~\text{type $1$}\\ [\g] ~\text{ram.
rank-one
split}}}\vol([\g])u^{l_A([\g])}\bigg(q^{\mu([\g])}\frac{q^{\de([\g])}-1}{q-1}
+ \mu([\g])q^{\de([\g])}\bigg) \\
&=& q^{-1}\sum_{n> 0} R_{n,0}u^n + \sum_{\substack{\g \in [\G],
~\text{type $1$}\\ [\g] ~\text{ram. rank-one
split}}}\vol([\g])u^{l_A([\g])}(-q^{\mu([\g])-1} +
\mu([\g])q^{\de([\g])}).
\end{aligned}
\end{equation}
Finally, plug (\ref{unramodds}) and (\ref{ramodds}) into
(\ref{rankonepart}) to complete the proof.
\end{proof}

\subsection{Proof of Theorem C}

Combining Propositions \ref{alltypezerocycles} and \ref{traceBnm},
we obtain
\begin{eqnarray*}
 & &q \bigg( \sum_{n>0}
\Tr(B_{n,0})u^{n}\bigg) - (q-1)\bigg(\sum_{\substack{n,m \geq 0 \\
(n,m)\neq (0,0)}} \Tr(B_{n,m})u^{n+2m}\bigg)\bigg(\frac{1-q^2
u^3}{1-u^3}\bigg)\\
&=& \sum_{n>0} (P_{n,0,s} + P_{n,0,i} + Q_{n,0} + R_{n,0})u^{n} - (q-1) \sum_{\g \in [\G], ~\text{irregular, ~type $2$}} \vol([\g]) u^{l_A([\g])}\\
&+&
\sum_{\substack{\g \in [\G], ~\text{not type $1$}\\ [\g]
~\text{unram. rank-one split}}}2\vol([\g])u^{l_A([\g])} \\
&+& \sum_{\substack{\g \in [\G], ~\text{not type $1$}\\ [\g]
~\text{ram. rank-one split}}}\vol([\g])u^{l_A([\g])}(q^{\mu([\g])} -
\mu([\g])q^{\de([\g])+1})
\end{eqnarray*}
since all irregular elements have type $1$ or $2$.
As before, to a rank-one split $\g$, we associate $r_\g = \left(
\begin{smallmatrix} a &  &  \\ & e & dc \\ & d& e+db \end{smallmatrix}\right) $.
First assume $\g$ is unramified rank-one split. By Theorem
\ref{rankoneminlength}, $[\g]$ has type $(n, m) = (\ord_\pi$$ a,
\min(\ord_\pi$$ e, \ord_\pi$$ d))$, hence $[\g]$ is not of type $1$ if
and only if $a$ is a unit, which is equivalent to its inverse
$[\g^{-1}]$ having type $(m, 0)$. Note that $l_A([\g]) = 2m =
2l_A([\g^{-1}])$ by Theorem \ref{rankoneminlength}. Next assume that
$[\g]$ is ramified rank-one split. Since $\mu([\g]) = 1$ implies
$\de([\g]) = 0$, we have $q^{\mu([\g])} - \mu([\g])q^{\de([\g])+1} =
0$ in this case. Thus we need only consider the case $\mu([\g]) = 0$
so that $q^{\mu([\g])} - \mu([\g])q^{\de([\g])+1} = 1$. Then $[\g]$
is not of type $1$ if and only if $a$ is a unit, in which case it
has type $(0, \ord_{\pi}$$ e)$ if $\ord_{\pi}$$ e \le \ord_{\pi}$$ d$, and
type $(1, \ord_{\pi}$$ d)$ if $\ord_{\pi}$$ d < \ord_{\pi}$$ e$ by Theorem
\ref{rankoneminlength}. Further, we see that $[\g^{-1}]$ has type
$(\ord_{\pi}$$ e, 0)$ so that $l_A([\g]) = 2 l_A([\g^{-1}]) =
2\ord_{\pi}$$ e$ in the former case, and in the latter case,
$[\g^{-1}]$ has type $(\ord_{\pi}$$ d, 1)$, $[\g^{-2}]$ has type
$(2\ord_{\pi}$$ d + 1, 0)$ and $l_A([\g]) = 1 + 2\ord_{\pi}$$ d =
l_A([\g^{-2}])$. Further, we have $\vol([\g]) = \vol([\g^{-1}]) =
\vol([\g^{-2}])$ for $\g$ rank-one split. Consequently, we may replace
$\g$ by $\g^{-1}$ and rewrite
\begin{eqnarray*}
&~&\sum_{\substack{\g \in [\G], ~\text{not type $1$}\\ [\g]
~\text{unram. rank-one split}}}2\vol([\g])u^{l_A([\g])} +
\sum_{\substack{\g \in [\G], ~\text{not type $1$}\\ [\g] ~\text{ram.
rank-one split}}}\vol([\g])u^{l_A([\g])}(q^{\mu([\g])} -
\mu([\g])q^{\de([\g])+1})\\&=& \sum_{\g \in [\G], ~\text{ type $1$
unram. rank-one split}}2\vol([\g])u^{2l_A([\g])} + \sum_{\g \in [\G],
~\text{ type $1$ ram. rank-one split}}\vol([\g])u^{2l_A([\g])}
\\&+& \sum_{\g \in [\G], [\g] ~\text{of type} ~(m, 1),~\text{ram. ~rank-one ~split}}\vol([\g])u^{l_A([\g^2])}.
\end{eqnarray*}
Together with the term $- (q-1) \sum_{\g \in [\G], ~\text{irregular, ~type $2$}} \vol([\g]) u^{l_A([\g])}$,  it gives the difference of the logarithmic
derivatives of $Z_1(X_\G, u^2)$ and $Z_B(X_\G, -u)$ by Theorem
\ref{compareZ2andZ1}. Here we used the fact that the inverse of a type $2$ irregular element $\g$ is type $1$ irregular, and
$\vol([\g]) = \vol([\g^{-1}])$.

 Combined with Propositions \ref{lefthand1} and \ref{zetaassum}, this proves

\begin{proposition}\label{extraterms}
\begin{eqnarray*}\label{Pn0intrace}
& &u\frac{d}{du}\log \bigg(\frac{(1-u^3)^{\chi(X_\G)}}{ \det(I- A_1 u+q A_2 u^2 - q^3I
u^3)}\bigg)  \\
 &= &q \bigg( \sum_{n>0}
\Tr(B_{n,0})u^{n}\bigg) - (q-1)\bigg(\sum_{\substack{n,m \geq 0 \\
(n,m)\neq (0,0)}} \Tr(B_{n,m})u^{n+2m}\bigg)\bigg(\frac{1-q^2
u^3}{1-u^3}\bigg)\\
&=& u\frac{d}{du} \log Z_{1,1}(X_\G, u) + u\frac{d}{du} \log Z_{1,2}(X_\G,
u) - u\frac{d}{du} \log Z_{2,1}(X_\G, -u). \qquad \qquad
\end{eqnarray*}
\end{proposition}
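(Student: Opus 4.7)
The identity is a chain of two equalities. The first, between the logarithmic derivative and the trace-of-Hecke expression, is exactly Proposition~\ref{lefthand1}, whose proof applies the operator trace to the Hecke-algebra identity (\ref{recursivehecke}) and uses the elementary count $\chi(X_\G)=\frac{(q+1)(q-1)^2}{3}V$. I take this first equality as granted and focus on the second.

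My plan is to substitute every ingredient already in hand into the middle expression and verify the resulting algebraic identity of formal power series. Specifically, I would insert Proposition~\ref{traceBnm} into the term $(q-1)\cdot\frac{1-q^2u^3}{1-u^3}\sum\Tr(B_{n,m})u^{n+2m}$ and Proposition~\ref{alltypezerocycles} into $q\sum_{n>0}\Tr(B_{n,0})u^n$. This presents the middle expression as four families of contributions, indexed respectively by split, irregular, unramified rank-one split, and ramified rank-one split conjugacy classes in $[\G]$.

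The split and irregular families simplify dramatically. The generating functions $\sum_{(n,m)\neq(0,0)}P_{n,m,s}u^{n+2m}$ and $\sum P_{n,m,i}u^{n+2m}$ appearing in Proposition~\ref{traceBnm} carry the factors $\tfrac{1-u^3}{1-q^3u^3}$ and $\tfrac{1-u^3}{1-q^2u^3}$, respectively, which are precisely neutralized after multiplication by the prefactor $\tfrac{1-q^2u^3}{1-u^3}$ and combination with the matching pieces of Proposition~\ref{alltypezerocycles}. What survives from these families is $\sum_{n>0}(P_{n,0,s}+P_{n,0,i}+Q_{n,0}+R_{n,0})u^n$, plus a residual $-(q-1)\sum_{\g\ \text{irregular, type }2}\vol([\g])\,u^{l_A([\g])}$. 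By Proposition~\ref{zetaassum}, the first sum equals $u\,d/du\log Z_{1,1}(X_\G,u)$.

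The rank-one split part requires more care. After collecting the surviving coefficients, I would reindex by $[\g]\mapsto[\g^{-1}]$ (and by $[\g]\mapsto[\g^{-2}]$ when $\g$ is ramified rank-one split of type $(m,1)$), using the invariances $\vol([\g])=\vol([\g^{-1}])=\vol([\g^{-2}])$ together with the length relations $l_A([\g])=2\,l_A([\g^{-1}])$ in the $(0,m)$ cases and $l_A([\g])=l_A([\g^{-2}])$ in the $(1,m)$ ramified case. This recasts every surviving type as $(m,0)$ or $(2m+1,0)$, which is precisely the format in which $u\,d/du\log Z_{1,2}(X_\G,u)$ (viewed as $u\,d/du\log Z_{1,1}(X_\G,u^2)$) and $u\,d/du\log Z_{2,1}(X_\G,-u)$ are expressed in Theorem~\ref{compareZ2andZ1}. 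Combined with the irregular residual above, the outcome is exactly the right-hand side of that theorem, completing the identity.

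The hard part will be coefficient-matching in the ramified rank-one split sub-case, where both $\mu([\g])$ and $\de([\g])$ appear. The crucial observation, recorded in the remark after Theorem~\ref{numberinrankoneclass}, is that $\mu([\g])=1$ forces $\de([\g])=0$, and $\mu([\g])=0$ whenever $[\g]$ has type $1$. Consequently the expression $q^{\mu([\g])}-\mu([\g])q^{\de([\g])+1}$ collapses to $1$ when $\mu([\g])=0$ and to $0$ when $\mu([\g])=1$, exactly the structure needed to match the coefficients prescribed by Theorem~\ref{compareZ2andZ1}. Once these parameter relations are fully exploited, the equality follows by direct, if tedious, term-by-term comparison.
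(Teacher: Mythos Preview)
Your proposal is correct and follows essentially the same route as the paper: you invoke Proposition~\ref{lefthand1} for the first equality, then combine Propositions~\ref{traceBnm} and~\ref{alltypezerocycles}, isolate the main term via Proposition~\ref{zetaassum}, reindex the rank-one split residuals via $\g\mapsto\g^{-1}$ (and the $\g^{-2}$ relation in the ramified $(m,1)$ case), and match against Theorem~\ref{compareZ2andZ1} using the $\mu$--$\de$ collapse. This is exactly the paper's argument, with the same ingredients in the same order.
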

\medskip

Consequently, we have
 $$\frac{(1-u^3)^{\chi(X_\G)}}{ \det(I- A_1 u+q A_2 u^2 - q^3I
u^3)} = c \frac{Z_{1,1}(X_\G, u)Z_{1,2}(X_\G, u)}{Z_{2,1}(X_\G, -u)} = c
\frac{\det(1 + L_Bu)}{\det(I - L_Eu) \det(I - (L_E)^t u^2)}$$ for some
constant $c$. Here the last equality comes from Propositions
\ref{rationalZ2} and \ref{Z1andLE}. Since both sides are formal
power series with constant term $1$, we find $c=1$. This concludes
the proof of Theorem C.

\end{document}